\numberwithin{equation}{section}
\numberwithin{equation}{section}
\newtheorem{thm}{Theorem}[section]
\newtheorem{proposition}{Proposition}[section]
\newtheorem{corollary}{Corollary}[section]
\newtheorem{lemma}{Lemma}[section]
\newtheorem{definition}{Definition}[section]
\newtheorem{hypothesis}{Hypothesis}[section]
\newtheorem{Rem}{Remark}[section]
\newtheorem{exmp}{Example}[section]
\def\v{{\mathbf{v}}}
\def\eps{\varepsilon}
\def\ti{\tilde}
\def\<{{\langle}}
\def\>{{\rangle}}
\def\({{\Big(}}
\def\){{\Big)}}
\def\]{{\Big]}}
\def\[{{\Big[}}
\def\bx{{\mathbf{x}}}
\def\dif{{\mathord{{\rm d}}}}
\def\={&\!\!=\!\!&}
\def\bt{\begin{theorem}}
\def\et{\end{theorem}}
\def\bl{\begin{lemma}}
\def\el{\end{lemma}}
\def\br{\begin{remark}}
\def\er{\end{remark}}
\def\bd{\begin{definition}}
\def\ed{\end{definition}}
\def\bp{\begin{proposition}}
\def\ep{\end{proposition}}
\def\bc{\begin{corollary}}
\def\ec{\end{corollary}}
\def\bx{\begin{Examples}}
\def\ex{\end{Examples}}
\def\cA{{\mathcal A}}
\def\cB{{\mathcal B}}
\def\cC{{\mathcal C}}
\def\cD{{\mathcal D}}
\def\cE{{\mathcal E}}
\def\cF{{\mathcal F}}
\def\cJ{{\mathcal J}}
\def\cK{{\mathcal K}}
\def\cL{{\mathcal L}}
\def\cM{{\mathcal M}}
\def\cN{{\mathcal N}}
\def\cO{{\mathcal O}}
\def\cS{{\mathcal S}}
\def\cX{{\mathcal X}}
\def\cY{{\mathcal Y}}
\def\cZ{{\mathcal Z}}
\def\mE{{\mathbb E}}
\def\mN{{\mathbb N}}
\def\mP{{\mathbb P}}
\def\mR{{\mathbb R}}
\def\mT{{\mathbb T}}
\def\mZ{{\mathbb Z}}
\def\geq{\geqslant}
\def\leq{\leqslant}
\numberwithin{equation}{section}
\title{\bf{Exponential mixing for the fractional Magneto-Hydrodynamic equations with degenerate stochastic forcing$^*$}}
\author{   Xuhui Peng$^{\flat,\spadesuit}$  \\
{\em\tiny  $^\flat$MOE-LCSM, School of Mathematics and Statistics, Hunan Normal University,
 }\\
 {\em\tiny  Changsha, {\rm 410081}, P. R. China}\\
 {\em\tiny  $^\spadesuit$Key Laboratory of Applied Statistics and Data Science, Hunan Normal University
 }\\
  {\em\tiny College of Hunan Province,  Changsha, {\rm 410081}, P. R. China}\\
 {  Jianhua Huang$^\dag$ ,\quad Yan Zheng }\\
{\em\tiny   College of Liberal Arts and Sciences, National  University of Defense Technology,
 }\\
{\em\tiny  Changsha, {\rm 410073}, P.R.China.}\\
}}
\date{}
\begin{document}
\footnote{$^*$
The first author was supported by Hunan Provincial Natural Science Foundation of China
(No.2019JJ50377), NSFC (No.11871476) and the Construct Program of the Key Discipline in Hunan Province.
The last two were  supported  by the
 NSF of China(No.11771449).}
\footnote{$^\dag$ Corresponding author: jhhuang32@nudt.edu.cn.}

\maketitle
\begin{abstract}
We establish the existence, uniqueness and exponential attraction properties of an invariant measure for the MHD equations with degenerate stochastic forcing acting only in the  magnetic equation. The central challenge is to establish time asymptotic smoothing properties of the associated Markovian semigroup corresponding to this system. Towards this aim we take full advantage of the characteristics of the  advective structure to discover a novel H$\mathrm{\ddot{o}}$rmander-type condition which only allows for  several noises in   the  magnetic direction.

\vskip0.5cm\noindent{\bf Keywords:} Exponential Mixing; Malliavin calculus; ergodic;
fractional Magneto-Hydrodynamic equations.
\vspace{1mm}\\
\noindent{{\bf MSC 2000:} 60H15; 60H07}
\end{abstract}

\section{Introduction}
\label{}

The dynamics of the velocity and the magnetic  field in electrically conducting fluids and basic physics conservation laws can be described by the Magneto-Hydrodynamic(MHD) equations (c.f. \cite{CA,CO}). The existence, uniqueness, regularity and stability of the MHD equations have been extensive studied in many papers, see  \cite{CW,DL,LZ,ST}.

Recently there has been mounting interest in the generalized  fractional  MHD equations,
\begin{eqnarray}\label{p-39}
\left\{
  \begin{split}
     & \partial_t  u+\big[ u\cdot \nabla u+\mu (-\Delta )^{\alpha} u \big]=\big[-\nabla p+b\cdot \nabla b\big],
     \\      & \partial_t b+\big[ u\cdot \nabla b+\nu (-\Delta )^{\beta} b \big]=b\cdot  \nabla u,
     \\ & u(0,x)=u_0, b(0,x)=b_0,
  \end{split}
  \right.
\end{eqnarray}
where $u(t,x), b(t,x)\in \mR^2. $
Wu \cite{WJ} proved that equations  (\ref{p-39}) have  a unique weak solution
when $\mu>0,\nu>0, \alpha\geq \frac{1}{2}+\frac{d}{4},  \beta\geq \frac{1}{2}+\frac{d}{4}, (u_0,b_0)\in L^2$  and equations  (\ref{p-39}) have  a classical   solution
when $\mu>0,\nu>0, \alpha\geq \frac{1}{2}+\frac{d}{4},  \beta\geq \frac{1}{2}+\frac{d}{4},$ and $u_0,b_0$ are sufficiently smooth.
For the 2D incompressible  MHD equations with horizontal dissipation and horizontal magnetic diffusion,
Cao, Dipendra and Wu \cite{CDW} proved that equations possesses a global regular solution if $\eps,\delta>0$ and  $u_0,b_0$ are sufficiently smooth.

Meanwhile, for the MHD equations driven by non-degenerate stochastic forcing terms both in the velocity and in the magnetic field, the existence and uniqueness of invariant measure was obtained via coupling method in \cite{BDP}. Huang and Shen \cite{HS}
  proved the  well-posedness and the existence of a random attractor for the stochastic 2D incompressible fractional MHD equations driven by Gaussian multiplicative noise.
  For the stochastic fractional MHD equations with degenerate multiplicative noise on the Torus $\mT^2,$  Shen, Huang and Zeng \cite{SHZ} proved the existence and uniqueness of the invariant measure for the associated transition semigroup. The noise in \cite{SHZ} is degenerate in the sense that it drives the system only in the first finite Fourier modes.

In this paper, we consider the following MHD  equations driven by degenerate additive noise   on two-dimensional torus $\mT^2=[-\pi,\pi]^2,$
  \begin{eqnarray}\label{p-1}
  \left\{
  \begin{split}
     & \dif u+\big[ u\cdot \nabla u+(-\Delta )^{\alpha} u \big]\dif t=\big[-\nabla p+b\cdot \nabla b\big]\dif t,
     \\      & \dif b+\big[ u\cdot \nabla b+(-\Delta )^{\beta} b \big]\dif t=b\cdot  \nabla u\dif t
  +\sum_{k=(k_1,k_2)\in Z_0}\left(\frac{k_2}{|k|},-\frac{k_1}{|k|}\right)^T\alpha_k^0\cos(k\cdot x) \dif W^{k,0}
  \\ & \quad \quad \quad \quad \quad \quad  \quad \quad \quad \quad \quad   +
     \sum_{k=(k_1,k_2)\in Z_0}\left(-\frac{k_2}{|k|},\frac{k_1}{|k|}\right)^T\alpha_k^1\sin(k\cdot x) \dif W^{k,1},
      \\
      &\nabla \cdot u=\nabla \cdot b=0,
      \\ &u(0,x)=u_0(x), ~~b(0,x)=b_0(x)
      \end{split}
      \right.
  \end{eqnarray}
  with periodic boundary value conditions
  \begin{eqnarray*}
    u_i(x,t)=u_i(x+2\pi j,t), ~~ b_i(x,t)=b_i(x+2\pi j,t),~~i=1,2,
  \end{eqnarray*}
  where $\alpha>1,\beta>1,t\geq 0, j\in \mZ,$ $u=(u_1,u_2)$ and $b=(b_1,b_2)$ denote the velocity field and magnetic field respectively, $p$ is a scalar pressure, $Z_0$ is a subset of  $\mZ^2 \backslash \{0,0\}$, $(W^{k,m})_{k\in Z_0,m\in \{0,1\}}$ is a $2|Z_0|$-dimensional Brownian motion defined relative to a filtered probability space $(\Omega,\cF,\{\cF_t\}_{t\geq 0},\mP)$, $\{\alpha_k^m\}_{k\in Z_0,m\in \{0,1\}}$ are non-zero constants. Throughout this paper, we assume that $d:=2| Z_0|<\infty.$

For $n\geq 0,$  define
\begin{eqnarray}\label{p-22}
\begin{split}
  \cZ_0:&=\{k  ~\big|~  k\in  Z_0 \text{ or } -k\in  Z_0\},
 \\ \cZ_n:&=\{ k+ \ell  ~\big|~ k\in \cZ_{n-1}, \ell \in \cZ_0, \langle k,\ell^{\bot}\rangle\neq0, |k|\neq |\ell| \},
 \end{split}
\end{eqnarray}
where  $\ell^{\bot}=(-\ell_2,\ell_1)$ for any $\ell=(\ell_1,\ell_2)$ and  $\langle \cdot,\cdot \rangle$ denotes the inner product on $\mR^2.$

\begin{hypothesis}\label{p-21}
 \begin{eqnarray*}
   \cup_{k=0}^{\infty}\cZ_{2k}=\mZ^2 \backslash \{0,0\},\quad\quad
   \cup_{k=0}^{\infty}\cZ_{2k+1}=\mZ^2 \backslash \{0,0\}.
 \end{eqnarray*}
\end{hypothesis}
Hypothesis \ref{p-21} may not seem intuitive, however it falls into the most interesting case of degenerate noise\textemdash"hypoellipticity" setting, and includes many interesting examples, see Remark \ref{rem hypo} and Example \ref{exmp hypo}. Now we state our main result as below.
\begin{thm}\label{theo 1}
Assume that Hypothesis \ref{p-21} holds, then the associated Markov semigroup corresponding to \ref{p-1} possesses a unique, exponentially mixing invariant measure.  Furthermore, a law of large numbers together with a central limit theorem is established under the current circumstances.
\end{thm}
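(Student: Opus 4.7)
The plan is to adapt the Hairer--Mattingly hypoelliptic framework (originally developed for the 2D Navier--Stokes equations) to the fractional MHD system (\ref{p-1}). Specifically, I would establish (i) existence of an invariant measure by a Krylov--Bogoliubov argument based on exponential moment bounds, (ii) the asymptotic strong Feller property (ASF) of the Markov semigroup $P_t$ via a Malliavin-type gradient estimate, and (iii) small-set irreducibility of $P_t$. Together (ii)--(iii) imply uniqueness of the invariant measure $\mu_*$ and exponential convergence of $P_t^*\nu$ to $\mu_*$ in a suitable dual-Lipschitz/Wasserstein metric, from which a law of large numbers and a central limit theorem for the ergodic averages $\tfrac{1}{T}\int_0^T\varphi(u_s,b_s)\,ds$ follow by a standard Gordin-type martingale decomposition applied to the resolvent of $P_t$.

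Because $\alpha,\beta>1$, the fractional dissipation is strongly smoothing and the nonlinearity is subcritical, so pathwise well-posedness of (\ref{p-1}) and the Feller property of $P_t$ on $H^s\times H^s$ are essentially standard. Applying It\^o's formula to the total energy $\tfrac12(\|u\|_{L^2}^2+\|b\|_{L^2}^2)$ and using incompressibility together with the cancellation $\int(b\cdot\nabla b)\cdot u\,dx+\int(b\cdot\nabla u)\cdot b\,dx=0$, the MHD nonlinear terms drop out of the energy identity and one obtains the exponential moment bound
\begin{equation*}
\mE\exp\!\big(\eta(\|u(t)\|_{L^2}^2+\|b(t)\|_{L^2}^2)\big)\leq C(u_0,b_0),\qquad t\geq 0,
\end{equation*}
for some $\eta>0$. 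Bootstrapping via fractional smoothing yields tightness in higher Sobolev spaces, and Krylov--Bogoliubov produces an invariant measure.

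For the ASF step, let $J_{0,t}\xi_0$ denote the linearisation of the flow along a tangent direction $\xi_0=(\xi^u_0,\xi^b_0)$ and $\cM_t=\cA_{0,t}\cA_{0,t}^*$ the Malliavin covariance, with $\cA_{0,t}:L^2([0,t];\mR^d)\to\bH$ the Malliavin derivative operator associated to the $d$-dimensional Brownian motion in (\ref{p-1}). Following Hairer--Mattingly, on each dyadic block $[2^n,2^{n+1}]$ one constructs a control $v$ that approximately inverts $\cM$ applied to $J_{0,\cdot}\xi_0$; bounding the residual via negative moments of $\cM_t$ tested against directions of at most polynomial Fourier growth yields
\begin{equation*}
\|\nabla P_t\varphi(u_0,b_0)\|\leq C\|\varphi\|_\infty+C\,e^{-\gamma t}\|\nabla\varphi\|_\infty,
\end{equation*}
which is precisely ASF. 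A complementary controllability argument --- the $b$-equation can be steered close to $0$ on an event of positive probability, after which $u$ relaxes under its own dissipation --- supplies the irreducibility near the origin that, combined with ASF, gives uniqueness of $\mu_*$ and exponential mixing.

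The main obstacle is proving that $\cM_t$ has finite negative moments against polynomial-growth Fourier directions even though the noise enters only finitely many modes in the \emph{magnetic} component alone. This is where Hypothesis \ref{p-21} plays its role. Writing the constant-coefficient magnetic forcing directions as $Y_k^m$ and the drift as $X_0$, the first-order bracket $[X_0,Y_k^m]$ contains a term proportional to $b\cdot\nabla Y_k^m$ sitting in the velocity slot and $u\cdot\nabla Y_k^m-Y_k^m\cdot\nabla u$ in the magnetic slot; each further bracket shifts the excited wave-number $k$ by an element $\ell\in\cZ_0$, and the conditions $\la k,\ell^\perp\ra\neq 0$ and $|k|\neq|\ell|$ in (\ref{p-22}) guarantee that the new direction is not annihilated by incompressibility or by the symmetry of the bilinear forms. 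A careful book-keeping shows that the parity of the bracket depth governs whether the velocity slot or the magnetic slot is being populated, which is exactly why Hypothesis \ref{p-21} requires \emph{both} $\cup_k\cZ_{2k}=\mZ^2\setminus\{(0,0)\}$ and $\cup_k\cZ_{2k+1}=\mZ^2\setminus\{(0,0)\}$. Quantifying this spanning uniformly in scale, in the spirit of the Hairer--Mattingly quantitative H\"ormander theorem, is the truly technical part of the argument, and the novelty lies in the fact that brackets must transport noise from the magnetic to the velocity slot through the MHD coupling $b\cdot\nabla b$ alone.
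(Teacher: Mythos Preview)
Your proposal is correct and follows essentially the same route as the paper: Krylov--Bogoliubov existence from the energy identity, a Hairer--Mattingly asymptotic strong Feller gradient estimate built on spectral bounds for the Malliavin matrix, weak irreducibility, and then the Hairer--Mattingly spectral-gap theorem together with Komorowski--Walczuk for the LLN/CLT; you have also correctly identified the key mechanism, namely that iterated brackets with the noise directions $\sigma_k^m$ alternate between producing velocity directions (via the $b\cdot\nabla b$ coupling) and magnetic directions (via $u\cdot\nabla b-b\cdot\nabla u$), which is precisely why Hypothesis~\ref{p-21} imposes separate conditions on $\cup\cZ_{2k}$ and $\cup\cZ_{2k+1}$. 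The only cosmetic discrepancy is that your ASF inequality should carry a prefactor $C\exp(\eta\|U_0\|^2)$ rather than a uniform constant, since all Jacobian and Malliavin bounds in this setting depend exponentially on the initial energy and the contraction is obtained in a weighted Wasserstein metric built from the Lyapunov function $V(U)=\exp(\eta\|U\|^2)$.
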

We remark that Theorem \ref{theo 1} is a simplified version of Theorem \ref{p-27} and refer readers to Section 2.2 for more details.

Nowadays ergodicity research on infinite-dimensional systems driven by degenerate stochastic forcing has attracted considerable attention (\cite{ADX,weinan,FGRT,martin,Hairer02,Hairer,K-S,KS,KS2002,Matt2002,RZ}), not only because this poses many interesting mathematical challenges, but also provides rigorous justification  for the explicit or implicit statistical measurement assumptions invoked in a physical environment. It is exciting that recently there have been remarkable breakthroughs (c.f. \cite{FGRT,martin,Hairer}), initiating the development a theory of "hypoellipticity" for degenerated forced infinite-dimensional stochastic systems. However, the whole theory is far from mature and remains in an involved formation. The reason for this is unlike in the case of finite-dimensional systems, the invertibility of Malliavin matrix is hard to prove, not to mention characterise its range. Experts have thus devised a tactful strategy to take full advantage of the structure of turbulent systems. Roughly speaking, infinite-dimensional as these systems are, their unstable directions are confined to be finitely many, and it is reasonable that one just focus on proving the Malliavin matrix to possess small eigenvalues on some spanning cones.

The technical difficulties of this method lie in how to generate successively larger finite dimensional spaces through the interaction between the nonlinear and stochastic terms and how to exert delicate spectral analysis on these spaces. To be more specific, one digging into the technical details will find that the proof virtually relies heavily on the results of progressive computation of Lie brackets using constant vector fields and nonlinear terms, by virtue of which the whole involved arguing process will be decomposed in an inductive manner and most importantly an appropriate H$\mathrm{\ddot{o}}$rmander condition will thus be determined. It is worth emphasizing that finding out such an ideal collection of Lie brackets to accomplish the task is case-by-case, there is no general recipe for all. For instance, Navier-Stokes equations and Boussinesq equations are treated quite differently and therefore lead to different H$\mathrm{\ddot{o}}$rmander condition (c.f. \cite{FGRT,martin,Hairer}).

The main contribution of the manuscript is, we successfully devised a special pattern of Lie bracket computations suitable for the fractional MHD equation, and thus propose a novel H$\mathrm{\ddot{o}}$rmander condition. Apart from  \cite{FGRT,martin}, the considered fractional MHD equations are of original formation instead of vorticity formation. Furthermore,  Due  to the  special form of   stochastic fractional   Magneto-Hydrodynamic equations  (\ref{p-1}),  we exert a series of Lie bracket computation strategically to exploit the distinctive  structure of nonlinear advective terms. Roughly speaking, we activate the noise term within the magnetic equation to spread to the velocity equation through advection, then perturb it again with stochastic forcing to generate new directions in the $b$ component of the phase space. On the flip side, new $\v$ directions can be generated similarly except being stochastically driven once. This procedure can be repeated iteratively so as to span the whole phase space as long as Hypothesis \ref{p-21} is satisfied (c.f. Section 4 for more details). Attentive readers may find that the whole deductive process and derived H$\mathrm{\ddot{o}}$rmander condition distinguish from that within \cite{FGRT,martin,Hairer}.

We would also like to add that the degenerate noise in \cite{SH2017,SHZ} is in a fairly simple manner and belongs to the so-called "essentially elliptic" setting. More precisely, although driven modes are assumed to be finite, they are forced to be one by one and required to be sufficiently many,  while in this paper we adopt a hypoellipticity setting, which allows for a limit number of directions to be driven on and off. We will further exemplify this essential difference with Example \ref{exmp hypo}, which also exhibits a distinct picture compared with \cite{FGRT,martin,Hairer}. All in all, our analysis gets the utmost out of existing techniques in the recent works but yields something peculiar and we believe it will enrich ergodic research upon systems of SPDEs.



This article is organized as follows: In Section 2 we introduce general definitions and formulate our main result (Theorem \ref{p-27}). Section 3 is devoted to some moment estimates which will be used frequently. In Section 4 we illustrate progressive computations of Lie brackets in detail. Then in Section 5 we focus on proving the  spectral properties of Malliavin matrix (Theorem \ref{p-9}) and give a
gradient estimate of  the Markov semigroup (Proposition \ref{p-28}). Finally, we provide a proof of Theorem  \ref{p-27} in Section 6.

\section{ Preliminaries}

\subsection{Mathematical setting}
In this section we introduce a functional setting for the equations \eqref{p-1}. Then we describe specifically the stochastic forcing, and thus formulate \eqref{p-1} as an abstract stochastic evolution equation. Finally, we introduce some basic elements of the Malliavin calculus centering on the Malliavin matrix.

The higher order Sobolev spaces are denoted by
\begin{eqnarray*}
  H_1^s:=\left\{u=(u_1,u_2)\in (W^{s,2}(\mT^2))^2: \nabla \cdot u=0, \int_{\mT^2}u_1(x)\dif x=\int_{\mT^2}u_2(x)\dif x=0\right\} \text{ for any }s\geq 0,
\end{eqnarray*}
where $W^{s,2}(\mT^2)$ is classical Sobolev-Slobodeckii space, and $H^{s}_1$ is equipped with the norm
\begin{eqnarray*}
  \|u\|_{H_1^s}^2:=\|u_1\|^2_{W^{s,2}}+\|u_2\|^2_{W^{s,2}},
\end{eqnarray*}
here $u=(u_1,u_2)$. Let $H_2^s=H_1^s,H^s=H_1^s\times H_2^s$ and $H_2^s$ is equipped with the same norm with $H_1^s$.
For any $U=(u,b)\in H^s,$ the norm of $U$  on the space $H^s$ is given by
\begin{eqnarray*}
  \|U\|_{H^s}^2=\|u\|_{H_1^s}^2+\|b\|_{H_2^s}^2.
\end{eqnarray*}
We also denote $H^{-s}:=(H^s)^*$ the dual space to $H^s$.
Specially,
\begin{eqnarray*}
  H_1:=H_1^0=\bigg\{u=(u_1,u_2)\in (L^2(\mT^2))^2~\big|\nabla \cdot  u=0, \int_{\mT^2}u_1(x)\dif x=\int_{\mT^2}u_2(x)\dif x=0\bigg\}.
\end{eqnarray*}
The norm on the space $H_1$ is given by
\begin{eqnarray*}
\|u\|^2=\|(u_1,u_2)\|^2:=\|u_1\|_{L^2}^2+\|u_2\|^2_{L^2}.
\end{eqnarray*}
Likewise, let $H:=H^0$. By a slight abuse of notation, $\langle \cdot , \cdot  \rangle$ may denote the inner product on  Hilbert space  $H$ or $H_1$. Let $\Pi$ be the projection operator from $(L^2(\mT^2))^2$ to the space $H_1$.

For any  $u\in H_1^{\alpha}$, let  $\Lambda^{\alpha} u=(-\Delta)^{\alpha/2}u.$
For any  $b\in H_2^{\beta}$, let  $\Lambda^{\beta} b=(-\Delta)^{\beta/2}b.$

For any  $m,n\in \mR,$ we denote by
\begin{eqnarray*}
H^{m; n}=\big\{w=(u,b)~\big|~u\in H_1^m,b\in H_2^n\big\},
\end{eqnarray*}
endowed with the norm
$
\|w\|_{H^{m;n}}^2=\|u\|_{H_1^m}^2+\|b\|_{H_2^n}^2.$
We also denote $H^{-m;-n}:=(H^{m;n})^*$ the dual space to $H^{m;n}$.

Next, we need to construct the stochastic forcing based on an orthogonal basis of $H$, therefore for $k=(k_1,k_2),$ denote
  \begin{eqnarray*}
  e_k^0&=&(\frac{k_2}{|k|},-\frac{k_1}{|k|})^T\cdot \cos(k\cdot x),
  \\ e_k^1&=&
   (-\frac{k_2}{|k|},\frac{k_1}{|k|})^T\cdot \sin(k\cdot x).
  \end{eqnarray*}
  It is commonsense that $\{e_k^m\}_{k\in \mZ^2\backslash \{0,0\},m\in \{0,1\}}$   forms  an  orthogonal basis of $H_1$ exactly.

Denote
\begin{eqnarray*}
\psi^0_k(x):=(e_k^0,0)^T\in H_1\times H_2,~~ \quad \psi_k^1(x)=(e_k^1,0)^T\in H_1\times H_2,
\end{eqnarray*}
and
\begin{eqnarray}\label{p-71}
\sigma^0_k(x):=(0,e_k^0)^T\in H_1\times H_2, ~~\quad \sigma_k^1(x)=(0,e_k^1)^T\in H_1\times H_2.
\end{eqnarray}

Let $\{e_k^{m}\}_{k\in\mZ_0, m\in\{0,1\}}$ be the standard basis of $\mR^{2|Z_0|}.$ We define a linear map $\mathcal{Q}_b:\mR^{2|Z_0|}\rightarrow H$ such that
\begin{eqnarray*}
  \mathcal{Q}_b e_k^m:=\alpha_k^m \sigma_k^m.
\end{eqnarray*}
Denote the Hilbert-Schmidt norm of $\mathcal{Q}_b$ by
\begin{eqnarray*}
  \cE_0:=\|\mathcal{Q}_b^*\mathcal{Q}_b\|=\sum_{k\in Z_0,m \in \{0,1\}}  (\alpha_k^{m})^2.
\end{eqnarray*}
We consider stochastic forcing of the form
\begin{eqnarray}\label{p-47}
  \mathcal{Q}_b
  \dif W=\sum_{k\in  Z_0,m\in \{0,1\}}\alpha_{k}^{m} \sigma_k^{m}\dif W^{k,m}.
\end{eqnarray}

For  $U=(u,b)^T$ and $\tilde{U}=(\tilde{u},\tilde{b})^T$, denote  $A^{\alpha,\beta}U=((-\Delta )^{\alpha} u, (-\Delta )^{\beta} b )^T$, and
\begin{eqnarray*}
  B(U,\tilde{U})&=& \left(\begin{split}
    &  \Pi \big[ u\cdot \nabla \tilde{u} -b\cdot \nabla \tilde{b}\big]
    \\
    & \Pi \big[ u\cdot \nabla \tilde{b}- b\cdot  \nabla \ti{u}\big]
  \end{split}
  \right),
  \\  B(U)&=&B(U,U),
  \\ F(U)&=&-A^{\alpha,\beta}U-B(U,U).
\end{eqnarray*}
With these preliminaries in hand, the equations (\ref{p-1}) may be written as an abstract stochastic evolution equation on $H$
\begin{eqnarray}\label{p-23}
  \dif U+\big(A^{\alpha,\beta}U+B(U,U)\big)\dif t=\mathcal{Q}_b\dif W, ~~U_0=(u_0,b_0),
\end{eqnarray}
or in a more compact formulation
\begin{eqnarray}\label{pp-23}
  \dif U=F(U)\dif t+\mathcal{Q}_b\dif W.
\end{eqnarray}
We  say that $U = U(t, U_0)$ is a solution of (2.14) if it is $\cF_t$-adapted,
$U \in  C([0, \infty); H) \cap  L^2_{loc}([0, \infty); H^1)$
a.s.
and $U$ satisfies (\ref{p-23}) in the mild sense, that is,
$$
U_t = e^{-A^{\alpha,\beta}t}U_0
-\int_0^t e^{-A^{\alpha,\beta}(t-s)}B(U_s,U_s)\dif s
 +\int_0^t
e^{-A^{\alpha,\beta}(t-s)}G\dif W_s.
$$
The well-posedness   can be established similarly as in \cite{HS}. Hence we let $U=U(t,U_0)$ be the unique solution of  (\ref{p-23}) with initial value $U_0$. For any $\xi=(\xi_1,\xi_2)\in H,$ $t\geq s\geq 0$, the Jacobian $J_{s,t}\xi$ is actually the unique solution of
\begin{eqnarray}\label{p-37}
\left\{
\begin{split}
  & \partial_t  J_{s,t}\xi+ A^{\alpha,\beta}J_{s,t}\xi +B(U_t,J_{s,t}\xi)+B(J_{s,t}\xi,U_t)=0,
  \\ & J_{s,s}\xi=\xi.
  \end{split}
  \right.
\end{eqnarray}
In the interest of brevity, set $J_t\xi:=J_{0,t}\xi.$
Let $J_{s,t}^{(2)}:H\rightarrow \cL(H,\cL(H))$
be the second derivative of $U$ with respect to an initial value  $U_0$. Observe
that for fixed $U_0\in H$ and any $\xi,\xi'\in H$ the function $\varrho=\varrho_t:=J_{s,t}^{(2)}(\xi,\xi')$ is the solution of
\begin{eqnarray}\label{J2}
   \partial_t \varrho_t +A^{\alpha,\beta}\varrho_t +\nabla B(U_t)\varrho_t+\nabla B(J_{s,t}\xi)J_{s,t}\xi'=0,~~~~\quad \varrho_s=0,
\end{eqnarray}
where $\nabla B(\theta)\vartheta=B(\theta,\vartheta)+B(\vartheta,\theta).$

Let  $d=2| Z_0|$.
The Malliavin derivative $\cD:L^2(\Omega,H)\rightarrow L^2(\Omega; L^2(0,T;\mR^{d})\times H)$ satisfies, for each $v\in L^2(0,T;\mR^{d}) $
\begin{eqnarray*}
  \langle \cD U,v\rangle_{ L^2(0,T;\mR^{d}) }=\lim_{\eps \rightarrow 0}\frac{1}{\eps}
  \Big(U(T,U_0,W+\eps \int_0^\cdot v_s \dif s)-U(T,U_0,W)\Big).
\end{eqnarray*}
One may infer from Duhamel's formula that (c.f. \cite{Hairer}) for $v\in L^2(\Omega; L^2(0,T;\mR^{d})) ,$
 \begin{eqnarray*}
  \langle \cD U,v\rangle_{ L^2(0,T;\mR^{d}) }=\int_0^TJ_{s,T}\mathcal{Q}_b v_s \dif s.
\end{eqnarray*}

We define the random operator $\cA_{s,t}:L^2(s,t;\mR^{d})\rightarrow H$ by
\begin{eqnarray*}
\cA_{s,t}v:=\int_{s}^{t}J_{r,t}\mathcal{Q}_b v_r\dif r.
\end{eqnarray*}
Direct computation shows that $\cA_{s,t}v$ satisfies the following equation
\begin{eqnarray*}
  \left\{
\begin{split}
  & \partial_t  \cA_{s,t}v + A^{\alpha,\beta}\cA_{s,t}v +B(U_t,\cA_{s,t}v)+B(\cA_{s,t}v,U_t)=\mathcal{Q}_b v_t,
  \\ & \cA_{s,s}v=0.
  \end{split}
  \right.
\end{eqnarray*}
For any $s<t$, let $\cA_{s,t}^*:H\rightarrow L^2(s,t;\mR^{d})$  be the adjoint of $\cA_{s,t}$, then
\begin{eqnarray*}
(\cA_{s,t}^*\xi)(r)=\mathcal{Q}_b^{*}\cK_{r,t}\xi,  \text{ for any }\xi\in H, r\in [s,t],
\end{eqnarray*}
where $\mathcal{Q}_b^*:H\rightarrow \mR^{d}$ is the adjoint of $\mathcal{Q}_b$, and for $s<t,~\cK_{s,t}\xi$ is the solution of the following "backward" system
\begin{eqnarray}\label{p-3}
 \partial_s \rho^* =A^{\alpha,\beta}\rho^*+(\nabla B(U))^{*}\rho^*=-(\nabla F(U))^*\rho^*,\quad \rho^*_t=\xi.
\end{eqnarray}

It is time to define the Malliavin matrix as
\begin{eqnarray*}
  \cM_{s,t}:=\cA_{s,t}\cA_{s,t}^*:H\rightarrow H.
\end{eqnarray*}
Observe that $\rho_t:=J_{0,t}\xi-\cA_{0,t}v$ satisfies
\begin{eqnarray*}
\left\{
\begin{split}
  & \partial_t  \rho_t+ A^{\alpha,\beta}\rho_t +B(U_t,\rho_t)+B(\rho_t,U_t)=-\mathcal{Q}_b v_t,
  \\ & \rho_0=\xi.
  \end{split}
  \right.
\end{eqnarray*}
This equation enables us to translate the ergodicity issue into a control problem. Actually in conjunction with the Malliavin integration by parts formula, one can obtain the estimate on $\nabla P_t\Phi$ through  spectral analysis on the Malliavin matrix $\mathcal{M}$ (c.f. Section 5).

\subsection{Main theorem}

Before stating the main theorem of the manuscript, let us recall some basic notations with regard to the associated Markovian semigroup. It is necessary to introduce new functional spaces first.

Denote by  $M_b(H)$ and $C_b(H)$ respectively, the spaces of bounded measurable and bounded continuous real valued functions on $H$ equipped with the supremum norm.
We also define
\begin{eqnarray*}
  \cO_\eta:&=&\left\{ \Phi \in C^1(H):\|\Phi\|_\eta<\infty\right\},
  \\  &&\text{where } \|\Phi \|_\eta:=\sup_{U_0\in H}\left(\exp{(-\eta\|U_0\|})(|\Phi(U_0)|+\|\nabla \Phi(U_0)\|)\right),
\end{eqnarray*}
for any $\eta>0,$  which is the special admissible functional space for Theorem \ref{p-27}.

The transition function associated to (\ref{p-23}) is given by
\begin{eqnarray*}
  P_t(U_0,E)=\mP(U(t,U_0)\in E) \text{ for any } U_0 \in H, E\in \cB(H), t\geq 0,
\end{eqnarray*}
where $\cB(H)$ is the collection of Borel sets on $H$,  $U(t, U_0)$ is  the  solution of  (\ref{p-23}) with initial value $U_0$.
We also define the Markov semigroup $\{P_t\}_{t\geq 0}$ with $P_t:M_b(H)\rightarrow M_b(H)$ associated to (\ref{p-1}) by
\begin{eqnarray}\label{p-26}
  P_t \Phi (U_0) :=\mE \Phi(U(t,U_0))=\int_H \Phi(U)P_t(U_0,\dif U) \text{ for any } \Phi \in M_b(H), t\geq 0.
\end{eqnarray}

Now we will give our main results in this article.
\begin{thm}\label{p-27}
Assume Hypothesis  \ref{p-21} holds, then there exists an unique invariant measure $\mu_*$ associated to (\ref{p-1}) and for each $t\geq 0$ the map $P_t$ is ergodic relative to  $\mu_*$. Moreover, there exists a constant $\eta^*$ such that $\mu_*$ satisfies for each $\eta\in (0,\eta^*)$
\begin{itemize}
  \item[({\romannumeral1}) ] (Mixing) There is  $\gamma=\gamma(\eta)>0$ and $C=C(\eta)$ such that
  \begin{eqnarray}\label{p-31}
    \left| \mE \Phi(U(t,U_0))-\int_H \Phi(\bar{U})\dif \mu_*(\bar{U}) \right|
    \leq C\exp{(-\gamma t+\eta \|U_0\|)} \| \Phi \|_\eta
  \end{eqnarray}
  holds for any  $\Phi \in \cO_\eta,U_0\in H$ and any $t\geq 0.$
   \item[({\romannumeral2}) ] (Weak law of large numbers)
   For any $\Phi\in \cO_\eta$ and any $U_0\in H$,
   \begin{eqnarray}\label{p-35}
     \lim_{T\rightarrow \infty}\frac{1}{T}\int_0^T \Phi(U(t,U_0))\dif t=\int_H \Phi(\bar{U})\dif \mu_*(\bar{U})=:m_{\Phi}, \text{  in probability.}
   \end{eqnarray}
  \item[({\romannumeral3}) ] (Central limit theorem) For any $\Phi\in \cO_\eta$, every $U_0\in H$ and  $\xi\in \mR$
      \begin{eqnarray}\label{p-36}
        \lim_{T\rightarrow \infty}\mP\left(\frac{1}{\sqrt{T}}\int_0^T (\Phi(U(t,U_0))-m_{\Phi})\dif t<\xi\right)=\cX(\xi),
      \end{eqnarray}
      where $\cX$ is the distribution function of a normal random variable with zero mean and variance equal to
      \begin{eqnarray*}
          \lim_{T\rightarrow \infty}\frac{1}{T}\mE \left(\int_0^T (\Phi(U(t,U_0))-m_{\Phi}) \dif t\right)^2.
      \end{eqnarray*}
\end{itemize}
\end{thm}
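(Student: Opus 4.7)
The plan is to apply the Hairer--Mattingly framework for ergodicity of infinite-dimensional stochastic systems driven by degenerate hypoelliptic forcing. The heart of the argument is to combine the gradient estimate of Proposition \ref{p-28} with a suitable irreducibility statement for (\ref{p-23}) in order to conclude uniqueness of the invariant measure $\mu_*$ and the exponential mixing bound (\ref{p-31}); parts (ii) and (iii) will then follow by classical means.

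First I would rely on the moment estimates of Section 3 for both the solution $U$ and its Jacobian $J_{s,t}$ and backward adjoint $\cK_{s,t}$, paying particular attention to exponential moments that absorb the weight $\e^{\eta\|U_0\|}$ appearing in $\cO_\eta$. The central and most delicate ingredient is Theorem \ref{p-9}: via the Lie bracket calculus of Section 4, the noise injected only into the $b$ equation is propagated through the advective nonlinearity $B(U,\tilde U)$ into the $u$ equation, and iterated brackets with the constant directions $\sigma_k^m$ generate new directions in both components; under Hypothesis \ref{p-21} these directions span $\mZ^2\setminus\{(0,0)\}$, yielding polynomial Markov-type bounds of the form
\begin{equation*}
\mP\Bigl(\inf_{\xi\in S_N}\<\cM_{0,t}\xi,\xi\> < \eps\Bigr) \leq C(N,p)\,\eps^{p},
\end{equation*}
for every $p\geq 1$, where $S_N$ is the unit sphere in the projection onto Fourier modes of size $\leq N$.

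With this spectral estimate in hand I would execute the standard Malliavin integration-by-parts trick: decomposing $J_{0,t}\xi = \cA_{0,t}v + \rho_t$ with a control $v$ built from $\mathcal{Q}_b^{*}\cK_{\cdot,t}\xi$ composed with a regularised inverse of $\cM_{0,t}$, one obtains
\begin{equation*}
\nabla P_t\Phi(U_0)\cdot \xi = \mE\bigl[\nabla \Phi(U_t)\cdot \rho_t\bigr] + \mE\Bigl[\Phi(U_t)\int_0^t \<v_s,\dif W_s\>\Bigr],
\end{equation*}
after which the residual $\|\rho_t\|$ decays through the interplay of the strong fractional dissipation ($\alpha,\beta>1$) with the spectral bound on $\cM$. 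This yields Proposition \ref{p-28}. Coupled with approximate controllability near the origin (a small-time Girsanov-type argument exploiting the dissipation), the Hairer--Mattingly weak-irreducibility criterion produces uniqueness, ergodicity, and the exponential mixing (\ref{p-31}). The weak LLN (\ref{p-35}) is then a straightforward consequence of (\ref{p-31}) applied to $\Phi-m_\Phi$; the CLT (\ref{p-36}) follows from a martingale approximation using the resolvent $\psi(U_0) = \int_0^\infty (P_t\Phi(U_0)-m_\Phi)\,\dif t$, convergent by (\ref{p-31}), together with a standard martingale CLT.

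The main obstacle, as stressed in the introduction, lies in step two: because forcing enters only in the magnetic direction and the two components are coupled non-symmetrically through $B$, the spanning scheme of Section 4 must alternate between generating $u$-directions (via advection of $b$) and $b$-directions (via direct stochastic perturbation), producing a genuinely new H\"ormander-type condition that is qualitatively different from those used for Navier--Stokes or Boussinesq in \cite{FGRT,martin,Hairer}. Delivering a uniform polynomial bound on the eigenvalues of $\cM_{0,t}$ that is compatible with both fractional Laplacians $(-\Delta)^\alpha$ and $(-\Delta)^\beta$ is the technical crux; once that is settled, the remaining ergodic machinery is essentially classical.
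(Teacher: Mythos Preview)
Your proposal is correct and follows essentially the same route as the paper: the asymptotic strong Feller gradient bound of Proposition~\ref{p-28} (fed by the Malliavin spectral estimate of Theorem~\ref{p-9}), combined with a Lyapunov structure drawn from Lemma~\ref{p-30} and a weak irreducibility statement, plugged into the Hairer--Mattingly spectral-gap machinery of \cite{Hairer02}. Two minor discrepancies are worth noting. First, for part~(iii) the paper does not build the resolvent $\psi=\int_0^\infty(P_t\Phi-m_\Phi)\,\dif t$ and run a martingale CLT as you suggest; instead it invokes the black-box result of Komorowski--Walczuk \cite[Theorem~2.1]{KW}, for which the only additional input beyond exponential mixing in the weighted Wasserstein metric $\rho$ of (\ref{peng-1}) is the third-moment bound $\int\rho(0,U)^3 P_t(U_0,\dif U)\leq C\exp(\eta^*\|U_0\|^2)$. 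Both approaches are standard once (\ref{p-31}) is available, and your route would work equally well. Second, the paper only proves the Malliavin lower bound $\mP(\inf\langle\cM\phi,\phi\rangle<\eps)\leq C\eps^{q}$ for \emph{some} fixed $q>0$ (see Proposition~\ref{h-5}), not for every $p\geq 1$ as you write; this overstatement is harmless, since a single positive exponent already suffices for Proposition~\ref{p-28}.
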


\begin{Rem}\label{rem hypo}
Interestingly, under Hypothesis \ref{p-21} it is possible that the noise allows to be so degenerate that only four modes in the magnetic direction are actually driven. The next example allows one to get a primary idea into this phenomenon, and meanwhile to notice the specificity of Hypothesis \ref{p-21} in comparison to \cite{FGRT,martin,Hairer02}.
\end{Rem}

\begin{exmp}\label{exmp hypo}
If  $ Z_0=\{ (0,1), (1,1), (1,0), (1,2)\}$, then Hypothesis \ref{p-21} holds.
\end{exmp}
\begin{proof}
 For $n\geq 0$, define
   \begin{eqnarray*}
   \hat{Z}_0&=&\{ (0,1), (1,1), -(0,1),-(1,1)\},
   \\
 \hat{Z}_n:&=&\{ k+ \ell  ~\big|~ k\in \hat{Z}_{n-1}, \ell \in \hat{Z}_0, \langle k,\ell^{\bot}\rangle\neq0, |k|\neq |\ell| \},
  \end{eqnarray*}
    then it is  not difficult to check  that
  \begin{eqnarray*}
    \hat{Z}_1=\{(-1,0), (-1,-2), (1,0),(1,2)\},
  \end{eqnarray*}
  and
  \begin{eqnarray*}
    \cup_{n=0}^\infty \hat{Z}_n= \mZ^2 \backslash \{0,0\}.
  \end{eqnarray*}

  By  (\ref{p-22}),
  \begin{eqnarray*}
   \cZ_0 &=&  \hat{Z}_0 \cup \hat{Z}_1,
   \\  \cZ_1 &\supseteq  &   \hat{Z}_1 \cup \hat{Z}_2,
   \\   \cZ_2 &\supseteq  &    \hat{Z}_2 \cup \hat{Z}_3,
   \\   &\cdots &
   \\   \cZ_k &\supseteq &  \hat{Z}_{k} \cup \hat{Z}_{k+1},
      \\   &\cdots &.
  \end{eqnarray*}
Therefore, one sees that
\begin{eqnarray*}
     \cup_{k=0}^{\infty}\cZ_{2k}\supseteq \cup_{n=0}^{\infty}\hat{Z}_{n} = \mZ^2 \backslash \{0,0\},\quad\quad
   \cup_{k=0}^{\infty}\cZ_{2k+1}\supseteq \cup_{n=0}^{\infty}\hat{Z}_{n} = \mZ^2 \backslash \{0,0\},
   \end{eqnarray*}
which yields the desired result.
\end{proof}

\section{Moment  estimates on $U_t,J_{s,t}\xi, \cK_{s,t}\xi, J_{s,t}^{(2)}(\xi,\xi'),\cM_{s,t}$.  }

In this section we provide moment bounds with respect to the unique solution $U$ and its linearizations. They may seem familiar for readers who are familiar with research works with regard to ergodicity on the stochastic Navier-Stokes equations and so on. Hence some proofs are sketched or omitted if they do not distinguish from existing methods. However for the fractional MHD equations \eqref{p-1}, we have to impose $\alpha>1,\beta>1$ to compensate for the complicated advective operator $B$. This is accomplished through delicate interpolation and weighting. Lemma \ref {p-30} and Lemma \ref {p-29} give one a glimpse of this strategy.
\begin{lemma}\label{p-30}
  For any $U_0\in H$, let $U_t=U(t,U_0)$ be the unique solution of (\ref{p-23})
  with initial value  $U_0$. Then there exists $\eta^*>0$ such that
  \begin{enumerate}
    \item  for   any $\eta\in (0,\eta^*]$ and   some $C=C(\eta,\cE_0)>0$, there holds
  \begin{eqnarray*}
    \mE\[\exp\Big\{\eta \|U_t\|^2+\frac{\eta}{2}e^{- t/2}\int_0^t \|\Lambda^\alpha u_s\|^2\dif s +\frac{\eta}{2}e^{- t/2} \int_0^t
    \|\Lambda^\beta b_s\|^2\dif s   \Big\}\]\leq C\exp\{\eta \|U_0\|^2e^{-  t}\}.
  \end{eqnarray*}

    \item  for some $C>0$ and   any $\eta\in (0,\eta^*],$
    \begin{eqnarray}\label{p-33}
  \mE  \exp\Big\{\eta \|U_t\|^2-\eta \|U_0\|^2+\eta \int_0^t \|\Lambda^\alpha u_s\|^2\dif s +\eta \int_0^t \|\Lambda^\beta b_s\|^2\dif s- \eta \cE_0t\Big\}\leq C.
    \end{eqnarray}
    \item For any  $N>0$ and $\eta\in (0,\eta^*]$,
    \begin{eqnarray*}
      \mE \exp\left\{\eta \sum_{k=0}^N\|U_k\|^2 \right\}\leq \exp{(\rho \eta \|U_0\|^2+\kappa N)},
    \end{eqnarray*}
    where $\rho,\kappa>0$ are positive constants independent of $N$ and $U_0.$
    \item  For any $s\geq 0, p\geq 2$, and $\eta\in(0,\eta^*]$, there exists $C=C(\eta,s,T,p)$ such that
        \begin{eqnarray*}
          \mE \left(\sup_{t\in [T/2,T]}\|U_t\|^p_{H^s}\right) &\leq &  C\exp{(\eta\|U_0\|^2)},
          \end{eqnarray*}
          and
          \begin{eqnarray*}
          \mE\left( \|U\|^p_{C^{1/4}([T/2,T],H^s)}\right) &\leq &   C\exp{(\eta\|U_0\|^2)}.
        \end{eqnarray*}
  \end{enumerate}
\end{lemma}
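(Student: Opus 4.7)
The plan is to follow the now-standard energy-method and exponential supermartingale arguments for stochastic fluid equations, using the cancellation property of the MHD trilinear form and Poincar\'e on the zero-mean spaces, and finally to upgrade to higher Sobolev norms using the extra dissipation afforded by $\alpha,\beta>1$. The basic ingredient is It\^o's formula for $\|U_t\|^2$. The MHD nonlinearity satisfies $\langle B(U,U),U\rangle=0$: decomposing into the four trilinear pieces $\langle u\cdot\nabla u,u\rangle$, $-\langle b\cdot\nabla b,u\rangle$, $\langle u\cdot\nabla b,b\rangle$, $-\langle b\cdot\nabla u,b\rangle$ and integrating by parts against $\nabla\cdot u=\nabla\cdot b=0$, the first and third vanish and the remaining two cancel. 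This produces the identity
\begin{equation*}
\dif\|U_t\|^2+2\bigl(\|\Lambda^\alpha u_t\|^2+\|\Lambda^\beta b_t\|^2\bigr)\dif t=\cE_0\,\dif t+2\langle U_t,\mathcal{Q}_b\dif W_t\rangle,
\end{equation*}
and Poincar\'e on the zero-mean spaces furnishes $\|\Lambda^\alpha u\|\geq\|u\|$ and $\|\Lambda^\beta b\|\geq\|b\|$, ensuring exponential dissipation at the level of the deterministic drift.

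To prove (2) I would apply It\^o to $\exp(\eta M_t)$ with $M_t$ equal to the exponent appearing in \eqref{p-33}. The drift cancels by construction and the quadratic variation of the martingale part is bounded by $4\eta^2\|\mathcal{Q}_b^{*}U_t\|^2\leq 4\eta^2\cE_0\|U_t\|^2$; using Poincar\'e this is dominated by $\eta(\|\Lambda^\alpha u\|^2+\|\Lambda^\beta b\|^2)$ as long as $\eta\leq\eta^*$ for a universal $\eta^*$ of the order $1/\cE_0$, so that $\exp(\eta M_t)$ becomes a genuine supermartingale. Item (1) is derived by the same computation, but carried out for $\exp(\eta e^{-t}\|U_t\|^2)$ with the integrating factor $e^{s/2}$ inside the dissipation integrals; the additional time-decay annihilates the $\cE_0 t$ contribution and produces the contracting prefactor $\exp(\eta e^{-t}\|U_0\|^2)$. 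Item (3) then follows by applying (2) on the unit slabs $[k,k+1]$ and invoking the Markov property: conditioning on $\cF_k$ gives $\mE[\exp(\eta\|U_{k+1}\|^2)\mid\cF_k]\leq C\exp(\lambda\eta\|U_k\|^2)$ for some $\lambda\in(0,1)$, after absorbing the stochastic convolution using the dissipation integral from (2) and the pathwise decay $\|U_{k+1}\|^2\leq e^{-2}\|U_k\|^2+\text{noise}$. Iterating this conditional bound backwards from $k=N$ produces a geometric series of coefficients whose sum stays bounded by $\rho=1/(1-\lambda)$, yielding the additive constant $\kappa N$ and the prefactor $\rho\eta\|U_0\|^2$.

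The main obstacle is part (4), the higher-Sobolev and time-H\"older estimates. The strategy is a bootstrap in the spatial smoothness index. First, the $L^2(0,T;H^{\alpha;\beta})$ control obtained in (2) ensures that at some random time $t_0\in(0,T/4)$ we have $U_{t_0}\in H^{\alpha;\beta}$ almost surely, with exponential moments controlled by $\exp(\eta\|U_0\|^2)$. Applying $\Lambda^{2s}$ to \eqref{p-23} and pairing with $\Lambda^{2s}U$ produces a dissipation $\|\Lambda^{s+\alpha}u\|^2+\|\Lambda^{s+\beta}b\|^2$ together with a trilinear term $\langle B(U,U),\Lambda^{2s}U\rangle$. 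The condition $\alpha,\beta>1$ is precisely what delivers the embedding $H^\alpha\hookrightarrow L^\infty(\mT^2)$, so standard Sobolev product/commutator estimates on the torus bound this trilinear term by the product of an $H^{\alpha;\beta}$-norm and the higher $H^{s+\alpha;s+\beta}$-norm, and Young's inequality then lets the dissipation absorb the excess. Starting the bootstrap from $t_0$ and running a Gronwall argument in which the exponential moments are tracked step-by-step using (2), one obtains the supremum bound in $H^s$ with the required $\exp(\eta\|U_0\|^2)$ dependence, possibly after shrinking $\eta^*$ a finite number of times. The $C^{1/4}([T/2,T];H^s)$ estimate then follows from the mild formulation \eqref{pp-23}: the analytic semigroup $e^{-A^{\alpha,\beta}t}$ supplies parabolic regularization, the drift $B(U,U)$ lies in a sufficiently integrable Bochner space by the $H^s$ bound just established, and the stochastic convolution is handled by the factorization method of Da Prato--Zabczyk. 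The delicate aspect throughout is to propagate the $\exp(\eta\|U_0\|^2)$ moment dependence through the bootstrap, which forces one to chain the estimates from (1)--(2) at each level and to accept a finite reduction of the threshold $\eta^*$.
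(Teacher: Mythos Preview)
Your proposal is correct and follows essentially the same approach as the paper: the It\^o identity for $\|U_t\|^2$ with the cancellation $\langle B(U,U),U\rangle=0$, the exponential supermartingale argument for (2), the time-weighted version for (1), the Markov iteration over unit intervals for (3), and the Sobolev bootstrap plus stochastic convolution regularity for (4). The paper simply outsources most of these steps to cited lemmas (Hairer--Mattingly \cite{Hairer02,martin} for (1) and (3), Kuksin--Shirikyan \cite{KS2012} for (4)), whereas you have spelled out the underlying arguments.
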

\begin{proof}
(1) By Ito's formula, for $\eta>0,$
  \begin{eqnarray*}
   \eta \|U_t\|^2-\eta \|U_0\|^2+2\eta \int_0^t \|\Lambda^\alpha u_s\|^2\dif s +2\eta \int_0^t \|\Lambda^\beta b_s\|^2\dif s &=& \eta \cE_0t +2\eta \int_0^t \langle b_s, \mathcal{Q}_b\dif W_s\rangle.
  \end{eqnarray*}
  Set  $\bar{Z}_t:=\eta  \|\Lambda^\alpha u_s\|^2+ \eta  \|\Lambda^\beta b_s\|^2$,
  then
  \begin{eqnarray*}
    \eta \cE_0-2\eta \|\Lambda^\alpha u_s\|^2-2 \eta  \|\Lambda^\beta b_s\|^2
    &\leq  &   \eta \cE_0 -2\bar{Z}_t,
    \\ 4\eta^2 |\langle b,\mathcal{Q}_b\rangle|^2 &\leq &  4\eta \cE_0  \bar{Z}_t.
  \end{eqnarray*}
 Applying \cite[lemma 5.1]{Hairer02} with $\bar{U}_t:=\eta \|U_t\|^2$,  one arrives  that there exists $\eta^*>0,$ such that for any $\eta\in (0,\eta^*]$
  \begin{eqnarray*}
    \mE\[\exp\Big\{\eta \|U_t\|^2+\frac{1}{2}e^{-t/2}\int_0^t \eta \|\Lambda^\alpha u_s\|^2\dif s +\frac{1}{2}e^{- t/2}\int_0^t \eta \|\Lambda^\beta b_s\|^2\dif s   \Big\}\]\leq C(\eta,\cE_0)\exp\{\eta \|U_0\|^2e^{- t}\}.
  \end{eqnarray*}

 (2)  Ito's formula yields that
  \begin{eqnarray*}
   \|U_t\|^2-\|U_0\|^2+2\int_0^t \|\Lambda^\alpha u_s\|^2\dif s +2\int_0^t \|\Lambda^\beta b_s\|^2\dif s &=& \cE_0t +2 \int_0^t \langle b_s, \mathcal{Q}_b\dif W_s\rangle,
  \end{eqnarray*}
  then  for any $\eta>0, $
      \begin{eqnarray*}
   \eta \|U_t\|^2-\eta \|U_0\|^2+2\eta \int_0^t \|\Lambda^\alpha u_s\|^2\dif s +\eta \int_0^t \|\Lambda^\beta b_s\|^2\dif s- \eta \cE_0t  &\leq &2 \eta \int_0^t \langle b_s, \mathcal{Q}_b\dif W_s\rangle-\eta \int_0^t \|\Lambda^\beta b_s\|^2\dif s .
  \end{eqnarray*}
    If $\eta\leq  \frac{1}{4 |\cE_0|^2}, $  the following inequality holds from the exponential martingale argument for some absolute constant $C,$
    \begin{eqnarray*}
  \mE  \exp\Big\{\eta \|U_t\|^2-\eta \|U_0\|^2+\eta \int_0^t \|\Lambda^\alpha u_s\|^2\dif s +\eta \int_0^t \|\Lambda^\beta b_s\|^2\dif s- \eta \cE_0t\Big\}\leq C.
    \end{eqnarray*}

    (3) The proof of (3) follows similarly as in \cite[proof of Lemma 4.10]{martin}.

    (4)  The proof of (4) follows similarly as in \cite[Proposition 2.4.12]{KS2012}
    and the fact $\|W^{k,\ell}\|_{C^{1/4}_{[T/2,T]}}$ has finite $p$th moment for any $p\geq 1.$
\end{proof}

The next lemmata include necessary estimates on linearizations of \eqref{p-23}. Referring back to \eqref{p-37}, \eqref{p-3} and \eqref{J2}, one finds that $J_{s,t}$ is the Jacobian operator with its adjoint $\cK_{s,t}$ and derivative $J^{(2)}_{s,t}$. At first glance the following bounds are closely related to those on the Malliavin derivative $\mathcal{M}_{s,t}$. The technically oriented readers may jump to Section 5 for further details.
\begin{lemma}\label{p-29}
 For $\xi\in H,$ assume $J_{s,t}\xi=(J_{s,t}^1\xi,J_{s,t}^2\xi)\in H_1\times H_2.$
For each  $\eta>0$ and $0<s<t,$ we have the following  estimate
\begin{eqnarray}\label{p-38}
\nonumber  &&  \|J_{s,t}\xi\|^2+\int_s^t \big[ \|\Lambda^\alpha J_{s,r}^1\xi \|^2+\|\Lambda^\beta  J_{s,r}^2\xi \|^2 \big] \dif r
  \\ && \leq C \exp{\left(\eta \int_s^t \|U_s\|^2_{H^1}\dif s+C(\eta)(t-s)\right)}\|\xi\|^2,
\end{eqnarray}
where $C,C(\eta)$ is independent of $s,t.$  Moreover, for each $\tau\leq T,p\geq 1$ and any  $\eta>0 $, there exists $C=C(\eta,T-\tau,p)$ such that
\begin{eqnarray}
  \label{p-11}\mE \sup_{s<t \in [\tau,T]}\|J_{s,t}\xi\|^p &\leq &  C\exp{(\eta \|U_0\|^2)} \|\xi\|^p,
   \\  \label{p-12} \mE \sup_{s<t \in [\tau,T]}\|\cK_{s,t}\xi\|^p &\leq &  C\exp{(\eta \|U_0\|^2)}\|\xi\|^p,
  \\  \label{p-13}\mE \sup_{s<t \in [\tau,T]}\|J_{s,t}^{(2)}(\xi,\xi')\|^p &\leq &  C\exp{(\eta \|U_0\|^2)}\|\xi\|^p\|\xi'\|^p.
\end{eqnarray}
\end{lemma}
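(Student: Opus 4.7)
The plan is to establish the pathwise bound \eqref{p-38} first by a direct energy estimate on the Jacobian equation \eqref{p-37}, and then to derive the $L^p$ moment estimates \eqref{p-11}--\eqref{p-13} by combining this bound with the exponential moment estimate of Lemma \ref{p-30}(1). Testing \eqref{p-37} against $J_{s,t}\xi$ and using that $u$ and $b$ are divergence-free, a direct calculation on the components shows the cancellation
$$\langle B(U_t, J_{s,t}\xi), J_{s,t}\xi\rangle = 0,$$
so one is left with
$$\frac{1}{2}\frac{d}{dt}\|J_{s,t}\xi\|^2 + \|\Lambda^\alpha J_{s,t}^1\xi\|^2 + \|\Lambda^\beta J_{s,t}^2\xi\|^2 = -\langle B(J_{s,t}\xi, U_t), J_{s,t}\xi\rangle.$$
Writing $J_{s,t}\xi=(\phi,\psi)$, the right-hand side is a sum of integrals of the form $\langle \phi\cdot\nabla u, \phi\rangle$, $\langle \psi\cdot\nabla b, \phi\rangle$, etc. Since $\alpha,\beta > 1$ I would use the 2D Sobolev embedding $H^{\alpha-\delta}\hookrightarrow L^{\infty}$ (for small $\delta > 0$) together with interpolation to bound these terms by
$$\tfrac{1}{2}\bigl(\|\Lambda^\alpha J_{s,t}^1\xi\|^2 + \|\Lambda^\beta J_{s,t}^2\xi\|^2\bigr) + C\bigl(\|U_t\|_{H^1}^2 + C(\eta)\bigr)\|J_{s,t}\xi\|^2,$$
where the constant $C(\eta)$ comes from Young's inequality and can be chosen so that the $\|U_t\|_{H^1}^2$ coefficient in the Gronwall exponent is $\eta$. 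Absorbing the dissipative terms on the left and invoking Gronwall then gives \eqref{p-38}.

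For the moment estimate \eqref{p-11}, I would raise \eqref{p-38} to the $p$th power and take expectations. Because $\alpha,\beta > 1$ we have $\|U_r\|_{H^1}^2 \lesssim \|\Lambda^\alpha u_r\|^2 + \|\Lambda^\beta b_r\|^2$, whose exponential moments are controlled, for $\eta$ chosen below the threshold $\eta^{*}$ of Lemma \ref{p-30}, by part (1) of that lemma. Hence the supremum over $s < t\in[\tau,T]$ of $\|J_{s,t}\xi\|^p$ is bounded in expectation by $C(\eta,T-\tau,p)\exp(\eta\|U_0\|^2)\|\xi\|^p$; the passage from the pointwise bound to the sup follows from the fact that $(s,t)\mapsto J_{s,t}\xi$ is continuous together with Kolmogorov-type arguments (or by running the energy estimate on a fine mesh and using its monotonicity in $t$).

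For the adjoint bound \eqref{p-12}, I would perform the analogous backward-in-time energy estimate on \eqref{p-3}. The structure of $(\nabla F(U))^{*}$ again produces the same cancellation from the divergence-free condition of $U_t$, so the argument is identical modulo the direction of time, and \eqref{p-12} follows from the same application of Lemma \ref{p-30}(1). For \eqref{p-13}, Duhamel's formula applied to \eqref{J2} gives
$$J_{s,t}^{(2)}(\xi,\xi') = -\int_s^t J_{r,t}\bigl[\nabla B(J_{s,r}\xi)J_{s,r}\xi'\bigr]\,\dif r,$$
and I would estimate the integrand in a negative-index Sobolev norm $H^{-s_0}$ for $s_0$ slightly smaller than $\min(\alpha,\beta)$, using product estimates to pair $J_{s,r}\xi$ and $\nabla J_{s,r}\xi'$ against each other. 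Combining the smoothing estimate $\|J_{r,t}\colon H^{-s_0}\to H\|\lesssim (t-r)^{-s_0/(2\min(\alpha,\beta))}$ with the already-established \eqref{p-11}, applying Cauchy-Schwarz under the expectation, and absorbing the time singularity since $s_0/(2\min(\alpha,\beta)) < 1$, yields \eqref{p-13}.

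The main obstacle is matching the $\eta$ that appears in the Gronwall exponent (arising from the advective term $B(J_{s,t}\xi,U_t)$ via interpolation and Young's inequality) with the admissible threshold $\eta^{*}$ of Lemma \ref{p-30}(1); this is precisely what forces the hypothesis $\alpha,\beta > 1$, since for smaller dissipation exponents the nonlinearity cannot be controlled by the available exponential moments of $\int\|\Lambda^\alpha u\|^2 + \|\Lambda^\beta b\|^2\,\dif s$.
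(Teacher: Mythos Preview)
Your argument for \eqref{p-38}, \eqref{p-11}, and \eqref{p-12} matches the paper's: the same cancellation $\langle B(U_t,J_{s,t}\xi),J_{s,t}\xi\rangle=0$, the same interpolation-plus-Young step to control $\langle B(J_{s,t}\xi,U_t),J_{s,t}\xi\rangle$ (the paper phrases this as bounding by $\|U_t\|_{H^1}\big[\|\Lambda^{\alpha'}J^1\|+\|\Lambda^{\beta'}J^2\|\big]\|J\|$ for $\alpha'\in(1,\alpha)$, $\beta'\in(0,\beta)$, then interpolating once more), and the same appeal to Lemma \ref{p-30}. One minor simplification: you do not need any Kolmogorov-type argument for the supremum in \eqref{p-11}, since the pathwise bound \eqref{p-38} already dominates $\|J_{s,t}\xi\|^2$ by $\exp\big(\eta\int_\tau^T\|U_r\|_{H^1}^2\dif r+C(\eta)(T-\tau)\big)\|\xi\|^2$ uniformly over $s<t$ in $[\tau,T]$.

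For \eqref{p-13} you take a genuinely different route. The paper does \emph{not} use Duhamel or a smoothing estimate for $J_{r,t}:H^{-s_0}\to H$; instead it repeats the direct energy estimate on the $\varrho$-equation \eqref{J2}, bounding $\langle \nabla B(J_{s,t}\xi)J_{s,t}\xi',\varrho_t\rangle$ by the same interpolation trick and then closing via Gronwall together with the already-established bound \eqref{p-38} on $\|J_{s,\cdot}\xi\|^2$ and $\int\|\Lambda^{\alpha'}J^1\|^2+\|\Lambda^{\beta'}J^2\|^2$. Your Duhamel approach is in principle valid, but it requires you to supply the smoothing bound $\|J_{r,t}\|_{\cL(H^{-s_0},H)}\lesssim (t-r)^{-s_0/(2\min(\alpha,\beta))}$ for the variable-coefficient linearization, which is an additional (standard but nontrivial) step; and the product estimate $\|\nabla B(J_{s,r}\xi)J_{s,r}\xi'\|_{H^{-s_0}}$ will force you to use not only $\sup_r\|J_{s,r}\xi\|$ but also the dissipative integral $\int_s^t\|\Lambda^\alpha J^1\|^2+\|\Lambda^\beta J^2\|^2$ from \eqref{p-38}. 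The paper's energy method is more self-contained here.
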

\begin{proof}
Recalling (\ref{p-37}), for $\alpha'\in (1,\alpha),\beta'\in (0,\beta)$ and any $\eta\in (0,1)$, we deduce from the interpolation inequality and Young inequality that
\begin{eqnarray*}
 \dif    \|J_{s,t}\xi\|^2& =& - 2 \langle A^{\alpha,\beta}J_{s,t}\xi, J_{s,t}\xi\rangle \dif t -2\langle B(U_t,J_{s,t}\xi), J_{s,t}\xi \rangle \dif t - 2 \langle B(J_{s,t}\xi,U_t), J_{s,t}\xi\rangle\dif t
 \\ &=& - 2 \langle A^{\alpha,\beta}J_{s,t}\xi, J_{s,t}\xi\rangle \dif t - 2 \langle B(J_{s,t}\xi,U_t), J_{s,t}\xi\rangle\dif t
 \\ &\leq & -2 \|\Lambda^\alpha J_{s,t}^1\xi \|^2-2 \|\Lambda^\beta  J_{s,t}^2\xi \|^2
 +C\|U_t\|_{H^1}\cdot\[ \|\Lambda^{\alpha'}  J_{s,t}^1\xi \|+ \|\Lambda^{\beta'}  J_{s,t}^2\xi \| \]\cdot \|J_{s,t}\xi  \|
 \\ &\leq & -2 \|\Lambda^\alpha J_{s,t}^1\xi \|^2-2 \|\Lambda^\beta  J_{s,t}^2\xi \|^2
 +C(\eta) \cdot\[ \|\Lambda^{\alpha'}  J_{s,t}^1\xi \|^2+ \|\Lambda^{\beta'}  J_{s,t}^2\xi \|^2  \] +\eta \|U_t\|_{H^1}^2\cdot \|J_{s,t}\xi  \|^2
 \\ &\leq &   - \|\Lambda^\alpha J_{s,t}^1\xi \|^2-\|\Lambda^\beta  J_{s,t}^2\xi \|^2 + \eta \|U_t\|_{H^1}^2\cdot \|J_{s,t}\xi  \|^2+C(\eta) \|J_{s,t}\xi\|^2,
\end{eqnarray*}
which leads to (\ref{p-38}).

(\ref{p-11}) and (\ref{p-12}) follows from  (\ref{p-38}) and Lemma \ref{p-30}.

For fixed $U_0\in H$ and any $\xi,\xi'\in H$, it follows from (\ref{p-37}) that the second derivative $\varrho_t:=J_{s,t}^{(2)}(\xi,\xi')=(\varrho_t^1,\varrho_t^2)\in H_1\times H_2$ satisfies
\begin{eqnarray*}
  \partial_t \varrho_t +A^{\alpha,\beta}\varrho_t +\nabla B(U_t)\varrho_t+\nabla B(J_{s,t}\xi)J_{s,t}\xi'=0,~~~~\quad \varrho_s=0.
\end{eqnarray*}
Then
\begin{eqnarray*}
  \partial_t\|\varrho_t\|^2+2\langle A^{\alpha,\beta}\varrho_t,\varrho_t \rangle +2\langle B(\varrho_t,U_t),\varrho_t\rangle+\langle B(J_{s,t}\xi,J_{s,t}\xi'),\varrho_t \rangle
  +\langle B(J_{s,t}\xi',J_{s,t}\xi),\varrho_t \rangle=0.
\end{eqnarray*}
Therefore again by Young inequality and Interpolation inequality, for  any $\alpha'\in (1,\alpha),\beta'\in (0,\beta)$ and  $\eta>0,$
\begin{eqnarray*}
 &&  \partial_t \|\varrho_t\|^2+2 \|\Lambda^\alpha \varrho_t^1\|^2+2 \|\Lambda^\beta \varrho_t^2\|^2
  \\
  & & \leq   C \[\|\Lambda^{\alpha'} \varrho_t^1\|+\|\Lambda^{\beta'} \varrho_t^2\|\]\[\|\varrho_t\|\cdot \|U_t \|_{H^1}\]
  \\&& \quad +C \[\|\Lambda^{\alpha'} \varrho_t^1\|+\|\Lambda^{\beta'} \varrho_t^2 \|\] \[ \|J_{s,t}\xi'\|\cdot \big[ \|\Lambda^{\alpha'}  J_{s,t}^1\xi \|+ \|\Lambda^{\beta'}  J_{s,t}^2\xi \|  \big]  \]
  \\&& \quad +C \[\|\Lambda^{\alpha'} \varrho_t^1\|+\|\Lambda^{\beta'} \varrho_t^2 \|\] \[ \big[ \|\Lambda^{\alpha'}  J_{s,t}^1\xi' \|+ \|\Lambda^{\beta'}  J_{s,t}^2\xi' \|  \big] \cdot   \|J_{s,t}\xi\| \]
  \\ && \leq  C(\eta) \[\|\Lambda^{\alpha'} \varrho_t^1\|^2+\|\Lambda^{\beta'} \varrho_t^2\|^2\] +\eta  \|\varrho_t\|^2\cdot \|U_t \|_{H^1}^2+\eta   \|J_{s,t}\xi'\|^2+\eta  \|J_{s,t}\xi\|^2
  \\ &&\quad +\eta \[ \|\Lambda^{\alpha'} J_{s,t}^1\xi \|^2+\|\Lambda^{\beta'}  J_{s,t}^2\xi \|^2 \] +\eta \[ \|\Lambda^{\alpha'} J_{s,t}^1\xi' \|^2+\|\Lambda^{\beta'}  J_{s,t}^2\xi' \|^2 \]
    \\ && \leq  \eta \[\|\Lambda^{\alpha} \varrho_t^1\|^2+\|\Lambda^{\beta} \rho_t^2\|^2\]+C(\eta)\|\varrho_t\|^2 +\eta  \|\varrho_t\|^2\cdot \|U_t \|_{H^1}^2+\eta   \|J_{s,t}\xi'\|^2+\eta  \|J_{s,t}\xi\|^2
  \\ &&\quad +\eta \[ \|\Lambda^{\alpha'} J_{s,t}^1\xi \|^2+\|\Lambda^{\beta'}  J_{s,t}^2\xi \|^2 \] +\eta \[ \|\Lambda^{\alpha'} J_{s,t}^1\xi' \|^2+\|\Lambda^{\beta'}  J_{s,t}^2\xi' \|^2 \].
\end{eqnarray*}
Setting $\eta$ small enough,
 one reaches from Gronwall's inequality that
\begin{eqnarray*}
&& \|\varrho_t\|^2
\\ && \leq C \eta   \int_s^t  \big[ \|J_{s,r}\xi'\|^2+ \|\Lambda^{\alpha'} J_{s,r}^1\xi' \|^2+\|\Lambda^{\beta'}  J_{s,r}^2\xi' \|^2 \big]  \dif r \cdot \exp{(\eta \int_s^t  \|U_r \|_{H^1}^2\dif r+ C(\eta)(t-s))}
\\ &&~~+
  C \eta   \int_s^t  \big[ \|J_{s,r}\xi\|^2+ \|\Lambda^{\alpha'} J_{s,r}^1\xi \|^2+\|\Lambda^{\beta'}  J_{s,r}^2\xi \|^2 \big]  \dif r \cdot \exp{(\eta \int_s^t  \|U_r \|_{H^1}^2\dif r+ C(\eta)(t-s))}.
\end{eqnarray*}
Now,  Lemma \ref{p-30} in combination with (\ref{p-38})-(\ref{p-12}) leads to (\ref{p-13}).
\end{proof}

The next lemma is frequently used in Section 5.2, despite the estimate involves a weak norm with respect to initial time only.
\begin{lemma}\label{p-10}
 For any $p\geq 2,T\geq 0,$ and $\eta>0$ there exists $C=C(p,T,\eta)$ such that
 \begin{eqnarray*}
   \mE \sup_{t\in [T/2,T]} \|\partial_t K_{t,T}\xi\|^p_{H^{-2\alpha;-2\beta}} \leq C\exp{(\eta \|U_0\|^2)}\|\xi\|^p.
 \end{eqnarray*}
\end{lemma}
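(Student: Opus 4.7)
The plan is to read off $\partial_t K_{t,T}\xi$ from the backward system~\eqref{p-3}, evaluate both the linear and the advective contributions in the weak norm $H^{-2\alpha;-2\beta}$, and then combine the resulting pointwise bound with the moment estimates already established in Lemma~\ref{p-30} and Lemma~\ref{p-29}. More precisely, from~\eqref{p-3} at each $t\in[T/2,T]$ one has
\[
  \partial_t K_{t,T}\xi \;=\; A^{\alpha,\beta}K_{t,T}\xi \;+\; (\nabla B(U_t))^{*}K_{t,T}\xi .
\]
Since $A^{\alpha,\beta}$ is bounded from $H$ into $H^{-2\alpha;-2\beta}$, the first term poses no difficulty and satisfies $\|A^{\alpha,\beta}K_{t,T}\xi\|_{H^{-2\alpha;-2\beta}}\leq \|K_{t,T}\xi\|$.

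For the nonlinear term I would argue by duality. Testing against an arbitrary $\phi=(\phi_u,\phi_b)\in H^{2\alpha;2\beta}$ with $\|\phi\|_{H^{2\alpha;2\beta}}\le 1$, I split
\[
  \langle K_{t,T}\xi,\nabla B(U_t)\phi\rangle
  \;=\; \langle K_{t,T}\xi, B(U_t,\phi)\rangle + \langle K_{t,T}\xi, B(\phi,U_t)\rangle .
\]
Because $\alpha,\beta>1$, Sobolev embedding on $\mT^2$ yields $\|\phi\|_{L^\infty}+\|\nabla\phi\|_{L^\infty}\leq C\|\phi\|_{H^{2\alpha;2\beta}}$. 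The first summand is controlled in $L^2$ by $\|U_t\|\|\nabla\phi\|_{L^\infty}$ (all derivatives sit on $\phi$), hence its pairing with $K_{t,T}\xi$ is bounded by $C\|K_{t,T}\xi\|\|U_t\|\|\phi\|_{H^{2\alpha;2\beta}}$. For the second summand I absorb $\phi$ through $\|\phi\|_{L^\infty}$ and keep the derivative on $U_t$, obtaining $\|B(\phi,U_t)\|_{L^2}\leq C\|\phi\|_{L^\infty}\|U_t\|_{H^1}$; pairing with $K_{t,T}\xi$ yields $C\|K_{t,T}\xi\|\|U_t\|_{H^1}\|\phi\|_{H^{2\alpha;2\beta}}$. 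Taking the supremum over $\phi$ gives the pointwise bound
\[
  \|\partial_t K_{t,T}\xi\|_{H^{-2\alpha;-2\beta}} \;\leq\; C\|K_{t,T}\xi\|\bigl(1+\|U_t\|_{H^1}\bigr).
\]

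The main obstacle is precisely the estimate for $B(\phi,U_t)$: one cannot shift the derivative onto $K_{t,T}\xi$ (which is only $L^2$ at $t=T$), and a direct integration by parts produces no cancellation either. This forces us to keep the derivative on $U_t$ and pay with a factor $\|U_t\|_{H^1}$. Fortunately Lemma~\ref{p-30}(4) supplies exactly this regularity uniformly over $t\in[T/2,T]$, thanks to the strong dissipation $\alpha,\beta>1$. To finish, I raise the pointwise bound to the $p$-th power, take $\sup_{t\in[T/2,T]}$, apply expectation together with the Cauchy--Schwarz inequality, and then invoke \eqref{p-12} in Lemma~\ref{p-29} to control $\mE\sup_t\|K_{t,T}\xi\|^{2p}\leq C\exp(\eta\|U_0\|^2)\|\xi\|^{2p}$ and Lemma~\ref{p-30}(4) to control $\mE\sup_t(1+\|U_t\|_{H^1})^{2p}\leq C\exp(\eta\|U_0\|^2)$. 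Choosing the auxiliary exponent small enough yields the desired estimate.
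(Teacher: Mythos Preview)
Your proof is correct and follows essentially the same route as the paper: read off $\partial_t K_{t,T}\xi$ from the backward system~\eqref{p-3}, bound the linear part trivially and the advective part by duality against $\phi\in H^{2\alpha;2\beta}$ to obtain $\|\partial_t K_{t,T}\xi\|_{H^{-2\alpha;-2\beta}}\leq C\|K_{t,T}\xi\|(1+\|U_t\|_{H^1})$, and then combine Lemma~\ref{p-29} with Lemma~\ref{p-30}(4) via Cauchy--Schwarz. The paper's write-up is terser (it simply asserts $|\langle\rho^*_t,\nabla B(U_t)\psi\rangle|\leq\|\rho^*_t\|\,\|U_t\|_{H^1}$ without the explicit split into $B(U_t,\phi)$ and $B(\phi,U_t)$ or the Sobolev embedding justification), but the underlying argument is identical.
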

\begin{proof}
Since $\rho_t^*=((\rho_t^*)^1,(\rho_t^*)^2)=K_{t,T}\xi$ satisfies the following equation
 \begin{eqnarray*}
 && \partial_t \rho^*_t =  A^{\alpha,\beta}\rho^*_t+(\nabla B(U_t))^{*}\rho^*_t=-(\nabla F(U_t))^*\rho^*_t,\quad \rho^*_T=\xi,
\end{eqnarray*}
it is immediate that
$$  \|A^{\alpha,\beta}\rho^*_t\|_{H^{-2\alpha; -2\beta}} \leq  \|\rho^*_t\|.$$
Notice that
 \begin{eqnarray*}
\|(\nabla B(U_t))^{*}\rho^*_t\|_{H^{-2\alpha;-2\beta}} &\leq & \sup_{\|\psi\|_{H^{2\alpha;2\beta}}\leq 1}|\langle (\nabla B(U_t))^{*}\rho^*_t , \psi\rangle|
\\ &\leq & \sup_{\|\psi\|_{H^{2\alpha;2\beta}}\leq 1}|\langle\rho^*_t ,  (\nabla B(U_t))\psi\rangle|
\\ & \leq &\|\rho^*_t\|\cdot  \|U_t\|_{ H^1 },
\end{eqnarray*}
then this lemma follows from  Lemma \ref{p-29}.
\end{proof}

For any $N\geq 1,$ define
\begin{eqnarray*}
  H_N:=span\{ \sigma_k^0,\sigma_k^1,\psi_k^0,\psi_k^1~:~ 0<|k|\leq N \},
\end{eqnarray*}
along with the associated projection operators
\begin{eqnarray*}
  P_N:H\rightarrow H_N \text{ the orthogonal projection onto $H_N$ }, ~~Q_N:=I-P_N.
\end{eqnarray*}

The following three  lemmas are particularly useful in translating the bounds on the Malliavin matrix into gradient estimates on the Markov semigroup (c.f. Proposition \ref{p-28}). Since their proofs adopt similar approach as above in combination with a straightforward modification of existing methods (c.f. \cite{FGRT,martin}), they are omitted to save space.
\begin{lemma}
  For every $p\geq 1,T>0,\delta,\gamma>0$ there exists  $N_*=N_*(p,T,\delta,\gamma),$ such that  for any $N\geq N_*$ one has
  \begin{eqnarray*}
    \mE \|Q_NJ_{0,T}\|_{\cL(H,H)}^p &\leq & \gamma \exp{(\delta \|U_0\|^2)},
    \\
       \mE \|J_{0,T}Q_N \|_{\cL(H,H)}^p &\leq & \gamma \exp{(\delta \|U_0\|^2)}.
  \end{eqnarray*}
    Here, $\cL(X,Y)$ denotes the operator norm of the linear map between the given Hilbert spaces $X$ and $Y.$
\end{lemma}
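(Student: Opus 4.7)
The plan is to exploit the parabolic smoothing generated by $A^{\alpha,\beta}$ together with the fact that the high-frequency projection $Q_N$ acts boundedly from $H^s$ into $H$ with norm at most $CN^{-s}$ for every $s>0$. Concretely, I would upgrade the standard $H$-to-$H$ moment bound on the Jacobian $J_{0,T}$ (and on its adjoint $K_{0,T}$) to an $H$-to-$H^s$ bound with exponential moments in $\|U_0\|^2$, and then absorb the decay factor $N^{-s}$ through $Q_N$.

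First I would establish a smoothing estimate for $J_{0,T}$. Writing the linearized equation (\ref{p-37}) in mild form,
$$
J_{0,T}\xi = e^{-A^{\alpha,\beta}T}\xi - \int_0^T e^{-A^{\alpha,\beta}(T-r)}\nabla B(U_r)\,J_{0,r}\xi\,\dif r,
$$
and combining the analytic-semigroup estimate $\|e^{-A^{\alpha,\beta}t}\|_{\cL(H^{-\sigma},H^s)}\leq C t^{-(s+\sigma)/(2(\alpha\wedge\beta))}$ with the bilinear bound $\|\nabla B(U)V\|_{H^{-\sigma}}\leq C\|U\|_{H^1}\|V\|$ for a suitably small $\sigma\in(0,1)$, one obtains, pointwise in $\omega$,
$$
\|J_{0,T}\xi\|_{H^s} \leq CT^{-\frac{s}{2(\alpha\wedge\beta)}}\|\xi\| + C\int_0^T (T-r)^{-\frac{s+\sigma}{2(\alpha\wedge\beta)}}\|U_r\|_{H^1}\|J_{0,r}\xi\|\,\dif r.
$$
Choosing $s\in(0,2(\alpha\wedge\beta)-\sigma)$ small enough to keep the singularity integrable, and invoking the moment bounds from Lemma \ref{p-30} on $\|U_r\|_{H^1}$ together with Lemma \ref{p-29} on $\|J_{0,r}\xi\|$, H\"older's inequality then yields
$$
\mE\|J_{0,T}\|_{\cL(H,H^s)}^p \leq C e^{\delta\|U_0\|^2}
$$
for any prescribed $p\geq 1$ and $\delta>0$. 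The bound on $\|Q_N J_{0,T}\|$ follows immediately from $\|Q_N w\|\leq N^{-s}\|w\|_{H^s}$: one has $\mE\|Q_N J_{0,T}\|_{\cL(H,H)}^p\leq N^{-sp}\mE\|J_{0,T}\|_{\cL(H,H^s)}^p\leq CN^{-sp}e^{\delta\|U_0\|^2}$, which is less than $\gamma e^{\delta\|U_0\|^2}$ as soon as $N\geq N_*:=(C/\gamma)^{1/(sp)}$.

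For $J_{0,T}Q_N$ I would argue by duality: since $Q_N$ is self-adjoint on $H$ and the backward system (\ref{p-3}) realizes the adjoint as $J_{0,T}^{*}=\cK_{0,T}$, one has $\|J_{0,T}Q_N\|_{\cL(H,H)}=\|Q_N\cK_{0,T}\|_{\cL(H,H)}$. Rerunning the same Duhamel/bootstrap argument for the backward equation (\ref{p-3}), and drawing on Lemma \ref{p-29} together with Lemma \ref{p-10}, one obtains the analogous smoothing bound $\mE\|\cK_{0,T}\|_{\cL(H,H^s)}^p\leq Ce^{\delta\|U_0\|^2}$, from which the desired inequality follows exactly as in the previous step.

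The main technical obstacle is balancing the derivative loss inherent in the advective nonlinearity $B$ against the smoothing of $e^{-A^{\alpha,\beta}t}$; this is precisely where the standing hypothesis $\alpha,\beta>1$ is consumed, since it leaves enough ``smoothing headroom'' to absorb the derivative lost in $B$ while still producing a strictly positive regularity exponent $s$, in the same spirit as the interpolation estimates carried out in Lemmas \ref{p-30} and \ref{p-29}.
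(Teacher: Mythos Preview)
Your approach is correct and is precisely the standard argument the paper has in mind: the paper omits the proof entirely, merely noting that it ``adopts similar approach as above in combination with a straightforward modification of existing methods (c.f.\ \cite{FGRT,martin}).'' Your sketch---Duhamel representation of $J_{0,T}$, analytic-semigroup smoothing to gain $H^s$ regularity at the cost of an integrable time singularity, then the trivial bound $\|Q_N\|_{\cL(H^s,H)}\leq N^{-s}$, and duality via $J_{0,T}^{*}=\cK_{0,T}$ for the second estimate---is exactly how those references proceed, so there is nothing to compare.

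Two minor points to tighten. First, the bilinear estimate $\|\nabla B(U)V\|_{H^{-\sigma}}\leq C\|U\|_{H^1}\|V\|$ does not quite hold for $\sigma\in(0,1)$ in 2D; one actually needs $\sigma$ slightly larger than $1$ (or to place a little more regularity on $U$). This is harmless here because $\alpha,\beta>1$ gives $2(\alpha\wedge\beta)>2$, so there is still room for a strictly positive $s$ with $(s+\sigma)/(2(\alpha\wedge\beta))<1$. Second, Lemma~\ref{p-10} concerns $\partial_t\cK_{t,T}$ in a negative norm and is not directly the smoothing estimate you need for $\cK_{0,T}$; the correct input is simply to rerun your Duhamel argument on the backward equation (\ref{p-3}), using the pointwise bound (\ref{p-12}) from Lemma~\ref{p-29} in place of (\ref{p-11}).
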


\begin{lemma}
  For $0<s<t$,
  \begin{eqnarray*}
    \|\cA_{s,t}\|_{\cL\left(L^2([s,t],\mR^d),H\right)}\leq C\left(\int_s^t \|J_{r,t}\|_{\cL(H,H)}^2\dif r\right)^{1/2}
  \end{eqnarray*}
  holds for a constant $C$ independent of $s,t.$ Moreover, for any $\kappa>0$
  \begin{eqnarray*}
    \|\cA_{s,t}^* (\cM_{s,t}+\kappa I)^{-1/2}\|_{\cL\left(H, L^2([s,t],\mR^d)\right)}& \leq & 1,
    \\
    \| (\cM_{s,t}+\kappa I)^{-1/2}\cA_{s,t}\|_{\cL\left( L^2([s,t],\mR^d),H\right)}& \leq & 1,
        \\
    \| (\cM_{s,t}+\kappa I)^{-1/2}\|_{\cL\left( H,H\right)}& \leq & \kappa^{-1/2}.
  \end{eqnarray*}
\end{lemma}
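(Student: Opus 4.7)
The plan is to prove the four bounds in turn, using essentially only the integral representation of $\cA_{s,t}$ together with the functional calculus for the self-adjoint positive operator $\cM_{s,t}=\cA_{s,t}\cA_{s,t}^*$.

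For the first bound, I would start from the definition $\cA_{s,t}v=\int_s^t J_{r,t}\mathcal{Q}_b v_r\,\dif r$ and apply the triangle inequality followed by Cauchy--Schwarz:
\begin{eqnarray*}
\|\cA_{s,t}v\|\leq \int_s^t \|J_{r,t}\|_{\cL(H,H)}\,\|\mathcal{Q}_b\|_{\cL(\mR^d,H)}\,|v_r|\,\dif r
\leq \|\mathcal{Q}_b\|\left(\int_s^t \|J_{r,t}\|_{\cL(H,H)}^2\dif r\right)^{1/2}\|v\|_{L^2([s,t],\mR^d)},
\end{eqnarray*}
which yields the first inequality with $C=\|\mathcal{Q}_b\|$ (note $\|\mathcal{Q}_b\|^2\leq \cE_0<\infty$).

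For the second and third bounds, the key observation is that for any bounded operator $T$ between Hilbert spaces, $\|T\|^2=\|T^*T\|=\|TT^*\|$. Applying this to $T=\cA_{s,t}^*(\cM_{s,t}+\kappa I)^{-1/2}$ and using self-adjointness of $(\cM_{s,t}+\kappa I)^{-1/2}$ gives
\begin{eqnarray*}
\|\cA_{s,t}^*(\cM_{s,t}+\kappa I)^{-1/2}\|^2
=\|(\cM_{s,t}+\kappa I)^{-1/2}\cA_{s,t}\cA_{s,t}^*(\cM_{s,t}+\kappa I)^{-1/2}\|
=\|\cM_{s,t}(\cM_{s,t}+\kappa I)^{-1}\|.
\end{eqnarray*}
Since $\cM_{s,t}\geq 0$ is self-adjoint on $H$, the spectral theorem furnishes a spectral resolution, and the map $\lambda\mapsto \lambda/(\lambda+\kappa)$ is bounded by $1$ on $[0,\infty)$; thus the operator norm is at most $1$, which gives the second inequality. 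The third inequality is then immediate since $\|(\cM_{s,t}+\kappa I)^{-1/2}\cA_{s,t}\|$ is the norm of the adjoint of the operator appearing in the second inequality.

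The fourth bound is a direct consequence of the spectral theorem: the function $\lambda\mapsto (\lambda+\kappa)^{-1/2}$ satisfies $(\lambda+\kappa)^{-1/2}\leq \kappa^{-1/2}$ for all $\lambda\geq 0$, hence $\|(\cM_{s,t}+\kappa I)^{-1/2}\|_{\cL(H,H)}\leq \kappa^{-1/2}$. There is no substantive analytic obstacle here; the only subtlety worth flagging is that $\cM_{s,t}$ must be interpreted as a densely defined nonnegative self-adjoint operator so that fractional powers and resolvents of $\cM_{s,t}+\kappa I$ are well-defined via Borel functional calculus. This is legitimate because $\cA_{s,t}$ is bounded (by the first bound combined with the moment estimate \eqref{p-11}) and hence $\cM_{s,t}$ is a bounded, nonnegative, self-adjoint operator on $H$.
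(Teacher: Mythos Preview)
Your proof is correct and complete. The paper itself omits the proof of this lemma entirely, stating only that it follows from ``a straightforward modification of existing methods'' with references to \cite{FGRT,martin}; your argument via Cauchy--Schwarz for the first bound and the spectral calculus for $\cM_{s,t}=\cA_{s,t}\cA_{s,t}^*$ for the remaining three is exactly the standard route those references take.
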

Recall that $ \cD$ is the Malliavin derivative. We adopt the notions
$$\cD_sF:=(\cD F)(s),\quad s\in[0,T],\quad\quad \cD^jF:=(\cD F)^j,\quad j=1,\ldots,d.$$
Then observe that for $\tau\leq t$
\begin{eqnarray*}
  \cD_\tau^jJ_{s,t}\xi=
  \left\{\begin{split}
    & J_{\tau,t}^{(2)}(\mathcal{Q}_b e_j,J_{s,\tau}\xi) \text{  if } s\leq \tau,
    \\
    & J_{s,t}^{(2)}(J_{\tau,s}\mathcal{Q}_b e_j,\xi) \text{  if } s>\tau.
  \end{split}
  \right.
\end{eqnarray*}

\begin{lemma}
  For any $\eta>0, \xi\in H$ and $p\geq 1$ we have the bounds
  \begin{eqnarray*}
    \mE\|\cD_\tau^jJ_{s,t}\xi\|^p &\leq & C\exp{(\eta \|U_0\|^2)}\|\xi\|^p,
    \\    \mE\|\cD_\tau^j\cA_{s,t}\|^p_{\cL\left( L^2([s,t],\mR^d),H\right)} &\leq & C\exp{(\eta \|U_0\|^2)},
      \\    \mE\|\cD_\tau^j\cA_{s,t}^*\|^p_{\cL\left( H, L^2([s,t],\mR^d)\right)} &\leq & C\exp{(\eta \|U_0\|^2)},
  \end{eqnarray*}
  where $C=C(\eta, p)$.
\end{lemma}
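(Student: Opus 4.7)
The plan is to reduce all three bounds to the moment estimates in Lemma \ref{p-29} by invoking the explicit representation of $\cD_\tau^j J_{s,t}\xi$ recalled just above the lemma, together with the integral representation $\cA_{s,t}v=\int_s^t J_{r,t}\mathcal{Q}_b v_r\,\dif r$ and duality for the third bound.

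For the first inequality I would split according to the two cases $s\leq\tau$ and $s>\tau$. In the case $s\leq\tau$, writing $\cD_\tau^j J_{s,t}\xi = J_{\tau,t}^{(2)}(\mathcal{Q}_b e_j, J_{s,\tau}\xi)$ and using the bilinearity of the second derivative, I would bound
\begin{eqnarray*}
\|\cD_\tau^j J_{s,t}\xi\| \leq \|J_{\tau,t}^{(2)}\|_{\cL(H\times H,H)}\cdot \|\mathcal{Q}_b e_j\|\cdot \|J_{s,\tau}\xi\|,
\end{eqnarray*}
then apply Cauchy--Schwarz in $\omega$ and use (\ref{p-11}) and (\ref{p-13}) with exponent $2p$ (and $\eta$ halved) to get the claimed bound, noting that $\|\mathcal{Q}_b e_j\|$ is a fixed constant. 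The case $s>\tau$ is handled identically using the other branch $\cD_\tau^j J_{s,t}\xi = J_{s,t}^{(2)}(J_{\tau,s}\mathcal{Q}_b e_j,\xi)$, splitting the $J^{(2)}$ and $J$ factors by Cauchy--Schwarz.

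For the second bound I would differentiate under the integral: since $v$ is treated as a deterministic test function and only $J_{r,t}$ carries randomness,
\begin{eqnarray*}
(\cD_\tau^j \cA_{s,t})v = \int_s^t (\cD_\tau^j J_{r,t})\mathcal{Q}_b v_r\,\dif r,
\end{eqnarray*}
so Cauchy--Schwarz on $[s,t]$ yields
\begin{eqnarray*}
\|\cD_\tau^j \cA_{s,t}\|_{\cL(L^2([s,t],\mR^d),H)} \leq \|\mathcal{Q}_b\|_{\cL(\mR^d,H)}\Bigl(\int_s^t \|\cD_\tau^j J_{r,t}\|_{\cL(H,H)}^2\,\dif r\Bigr)^{1/2}.
\end{eqnarray*}
Taking $p$-th moments, invoking Minkowski in $L^{p}(\Omega)$ versus $L^{2}([s,t])$ (that is, raising both sides to the power $p$ and integrating after applying Jensen), and substituting the operator-norm version of the first bound just established, delivers the required estimate.

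Finally, the third bound is obtained by duality. Since the Malliavin derivative commutes with taking adjoints of random bounded operators, $(\cD_\tau^j \cA_{s,t})^* = \cD_\tau^j \cA_{s,t}^*$, and the operator norm of a bounded linear map between Hilbert spaces equals that of its adjoint. Hence the third inequality is just a restatement of the second. The only delicate point throughout is verifying that the first bound holds in operator norm, which is immediate because the right-hand sides of (\ref{p-11}) and (\ref{p-13}) are linear in $\xi$ (and $\xi'$), so the pathwise constants in front can be identified with operator norms after taking the supremum over unit vectors. No novel idea is needed; the bookkeeping is the main (minor) obstacle.
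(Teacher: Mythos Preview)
Your proposal is correct and follows exactly the standard route the paper has in mind: the paper omits this proof and refers to \cite{FGRT,martin}, where the argument proceeds precisely via the representation of $\cD_\tau^j J_{s,t}\xi$ in terms of $J^{(2)}$ and $J$, Cauchy--Schwarz to split the factors, and then the pathwise bilinearity of the bounds behind (\ref{p-11}) and (\ref{p-13}) to pass to operator norms; the bound on $\cD_\tau^j\cA_{s,t}$ then follows by differentiating under the integral, and the adjoint bound by duality. The only cosmetic point is that the inequality $\|J_{\tau,t}^{(2)}(\xi,\xi')\|\leq \|J_{\tau,t}^{(2)}\|_{\cL(H\times H,H)}\|\xi\|\|\xi'\|$ should be read as the pathwise bilinear estimate coming from the proof of (\ref{p-13}), not as a product-space operator norm, but you already note this in your last sentence.
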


\section{Details of  Lie bracket computations}

For any Fr\'echet differentiable $E_1,E_2:H\rightarrow H,$
\begin{eqnarray*}
  [E_1,E_2](u):=\nabla E_2(u)E_1(u)-\nabla E_1(u) E_2(u).
\end{eqnarray*}
$[E_1,E_2]$ is referred to as the  Lie bracket of two "vector fields"  $E_1,E_2.$



This section is technical, however, reveals some intrinsic thoughts of the manuscript. As a matter of fact, we present that for any $N\in\mathbb{N}$, how finite dimensional subspaces $H_N$ of $H$ can be generated through the iterations of Lie brackets. It is worth mentioning that these computations are motivated by the celebrated H$\mathrm{\ddot{o}}$rmander condition for the Kolmogorov-Fokker-Planck equations associated to \eqref{p-23}. The following is split into two parts. Firstly, we describe how the velocity direction $u$ is covered.

\subsection{Covering velocity direction}

For $u, \tilde{u}\in H_1=H_2$, denote $\mathbf{b}(u,\tilde{u}):=u\cdot  \nabla \tilde{u}.$
For any $\ell,k \in \mZ^{2},  m,m'\in \{0,1\}$ and $U=(u,b)\in H_1\times H_2$,  we introduce
 \begin{eqnarray}
  \nonumber   Y_k^{m}(U): &=& [F(U),\sigma_k^{m}]
\\ \nonumber  &=&  A^{\alpha,\beta}\sigma_k^{m}+B(\sigma_k^{m},U)+B(U,\sigma_k^{m}),
  \\ \nonumber  J_{k,\ell}^{m,m'}(U):&=&  -[ Y_k^{m}(U),\sigma_\ell^{m'}]
   \\ \nonumber  &=& B(\sigma_k^{m},\sigma_\ell^{m'})+B(\sigma_\ell^{m'},\sigma_k^{m})
  \\ \nonumber  &=& \left(
  \begin{split}
    &  -\Pi \mathbf{b}(e_k^m,e_{\ell}^{m'}) -\Pi \mathbf{b}(e_\ell^{m'},e_k^m)
    \\
    & 0
  \end{split}\right)
  \\ \label{p-49}  :&=& \left(
  \begin{split}
    & -\Pi \cJ_{k,\ell}^{m,m'}
    \\
    & 0
  \end{split}\right).
\end{eqnarray}

In fact, $Y_k^{m}(U)$ and $J_{k,\ell}^{m,m'}(U)$ are devised elaborately by calculation to guarantee that the following two lemmas hold.

\begin{lemma}\label{p-19}
For $k,\ell\in  \mZ_{+}^2,$
\begin{eqnarray*}
&&  \mathbf{b}(e_k^1,e_\ell^1)=\frac{\langle k,\ell^{\bot} \rangle  }{|k||\ell|}\sin(k\cdot x)\cos(\ell \cdot x)(\ell_2,-\ell_1)^T,
 \\ &&  \mathbf{b}(e_\ell^1 ,e_k^1)=\frac{\langle \ell,k^{\bot}\rangle  }{|k||\ell|}\sin(\ell \cdot x)\cos(k \cdot x) (k_2,-k_1)^T,
  \\ &&  \mathbf{b}(e_k^1 ,e_{\ell}^0)=\frac{\langle k,\ell^{\bot} \rangle }{|k||\ell|}\sin(k \cdot x)\sin(\ell  \cdot x)(-\ell_2,\ell_1)^T,
    \\ &&  \mathbf{b}(e_\ell^1 ,e_{k}^0)=\frac{\langle \ell,k^{\bot} \rangle }{|k||\ell|}\sin(\ell \cdot x)\sin(k  \cdot x)(-k_2,k_1)^T,
 \end{eqnarray*}
 and
 \begin{eqnarray*}
&&  \mathbf{b}(e_{k}^0,e_\ell^1)=\frac{\langle k,\ell^{\bot} \rangle  }{|k||\ell|}\cos(k\cdot x)\cos(\ell \cdot x)(\ell_2,-\ell_1)^T,
 \\ &&  \mathbf{b}(e_{\ell}^0 ,e_k^1)=\frac{\langle \ell,k^{\bot}\rangle  }{|k||\ell|}\cos(\ell \cdot x)\cos(k \cdot x) (k_2,-k_1)^T,
  \\ &&  \mathbf{b}(e_{k}^0 ,e_{\ell}^0)=\frac{\langle k,\ell^{\bot} \rangle }{|k||\ell|}\cos(k \cdot x)\sin(\ell  \cdot x)(-\ell_2,\ell_1)^T,
    \\ &&  \mathbf{b}(e_{\ell}^0 ,e_{k}^0)=\frac{\langle \ell,k^{\bot} \rangle }{|k||\ell|}\cos(\ell \cdot x)\sin(k  \cdot x)(-k_2,k_1)^T.
 \end{eqnarray*}
\end{lemma}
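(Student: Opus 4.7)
The plan is to carry out eight direct computations of $\mathbf{b}(e_k^m,e_\ell^{m'}) = e_k^m\cdot\nabla e_\ell^{m'}$, one for each combination appearing in the statement. The work is mechanical once one sets up notation that exposes the common structure; there is no conceptual difficulty, only sign bookkeeping.

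First I would extract structural features. Each basis vector has the factored form $e_j^\mu = v_j^\mu\, f_\mu(j\cdot x)$, where $v_j^0 = (j_2,-j_1)^T/|j|$, $v_j^1 = (-j_2,j_1)^T/|j|$, and $f_0 = \cos$, $f_1 = \sin$. In particular, $v_j^\mu$ is parallel to $j^\bot$ (up to sign), so $v_j^\mu\cdot j = 0$. Applying the chain rule gives $\nabla e_\ell^{m'}(x) = v_\ell^{m'}\otimes \ell\, f_{m'}'(\ell\cdot x)$, and contracting against $e_k^m$ yields
\begin{equation*}
\mathbf{b}(e_k^m,e_\ell^{m'}) \;=\; \bigl(v_k^m\cdot \ell\bigr)\, f_m(k\cdot x)\, f_{m'}'(\ell\cdot x)\, v_\ell^{m'}.
\end{equation*}
The scalar $v_k^m\cdot\ell$ reduces, via the identity $\langle k^\bot,\ell\rangle = -\langle k,\ell^\bot\rangle$, to $\pm \langle k,\ell^\bot\rangle/|k|$, with the sign determined by whether $m=0$ or $m=1$. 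Likewise $v_\ell^{m'}$ is either $(\ell_2,-\ell_1)^T/|\ell|$ or $(-\ell_2,\ell_1)^T/|\ell|$.

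Next I would tabulate the eight cases by simply specializing the formula above. For each row, there are three sign contributions to track: the sign in $v_k^m$, the sign from the derivative $f_{m'}'$ (noting $\cos' = -\sin$ while $\sin' = \cos$), and the sign from the identity $\langle k^\bot,\ell\rangle = -\langle k,\ell^\bot\rangle$. After combining these, the trigonometric part becomes one of $\sin(k\cdot x)\cos(\ell\cdot x)$, $\cos(k\cdot x)\sin(\ell\cdot x)$, $\sin(k\cdot x)\sin(\ell\cdot x)$, or $\cos(k\cdot x)\cos(\ell\cdot x)$, consistent with which of $f_m,f_{m'}$ appear and which gets differentiated, and the vector part lines up with the $v_\ell^{m'}$ listed in the statement. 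The paired formulas obtained by swapping the roles of $k$ and $\ell$ follow from the same template without rework.

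The only real obstacle is sign bookkeeping, since several sign flips must cancel down to a single factor $\langle k,\ell^\bot\rangle/(|k||\ell|)$. To guard against algebra slips I would verify one representative case fully (say $\mathbf{b}(e_k^1,e_\ell^1)$), then cross-check consistency by swapping $k\leftrightarrow\ell$ to recover $\mathbf{b}(e_\ell^1,e_k^1)$, and finally sanity-check the symmetrized combination $\mathbf{b}(e_k^m,e_\ell^{m'}) + \mathbf{b}(e_\ell^{m'},e_k^m)$ used in \eqref{p-49}: since both summands have vector parts proportional to $k^\bot$ or $\ell^\bot$, the symmetrized object should be expressible in terms of products like $\sin((k\pm\ell)\cdot x)$ via trigonometric identities, which is exactly what the subsequent Lie bracket argument will exploit.
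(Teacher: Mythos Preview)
Your approach is correct and is exactly the direct computation the paper has in mind; in fact the paper does not spell out a proof of this lemma at all, treating the eight identities as immediate consequences of the definitions of $e_k^m$ and $\mathbf{b}(u,\tilde u)=u\cdot\nabla\tilde u$. Your factorization $e_j^\mu=v_j^\mu f_\mu(j\cdot x)$ and the resulting template $\mathbf{b}(e_k^m,e_\ell^{m'})=(v_k^m\cdot\ell)\,f_m(k\cdot x)\,f_{m'}'(\ell\cdot x)\,v_\ell^{m'}$ is precisely the right bookkeeping device, and your sign check on the representative case $\mathbf{b}(e_k^1,e_\ell^1)$ goes through.
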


\begin{lemma}\label{p-48}
Let $a=\frac{\langle k,\ell^{\bot}\rangle}{|k||\ell|}$, then for any $k,\ell \in  \mZ_{+}^2$,
  \begin{eqnarray*}
   \cJ_{k,\ell}^{0,1} &=& \mathbf{b}(e_{k}^0,e_\ell^1)+\mathbf{b}(e_\ell^1,e_{k}^0)
    \\ &=& a\cos((k+\ell)x)(\ell_2-k_2,-\ell_1+k_1)^T
 +a\cos((k-\ell)x)(\ell_2+k_2,-\ell_1-k_1)^T,
      \\   \cJ_{\ell,k}^{0,1} &=& \mathbf{b}(e_{\ell}^0,e_k^1)+\mathbf{b}(e_k^1,e_{\ell}^0)
      \\ &=& -a\cos((k+\ell)x)(k_2-\ell_2,-k_1+\ell_1)^T
 -a\cos((\ell-k)x)(\ell_2+k_2,-\ell_1-k_1)^T,
  \end{eqnarray*}
  and furthermore
  \begin{eqnarray}
 \label{p-70}  \cJ_{k,\ell}^{0,1}+ \cJ_{\ell,k}^{0,1} &=&2a\cos((k+\ell)x)(\ell_2-k_2,-\ell_1+k_1)^T,
 \\ \label{p-69} \Pi\big[ \cJ_{k,\ell}^{0,1}+ \cJ_{\ell,k}^{0,1}\big]&=& ac\frac{1}{|k+\ell|}\cdot (|\ell|^2-|k|^2)e_{k+\ell}^0,\\
     \Pi\big[\cJ_{k,\ell}^{0,1}-\cJ_{\ell,k}^{0,1}\big]&=&ac\frac{-|\ell|^2+|k|^2}{|k-\ell|}\cdot   e_{k-\ell}^0,
     \\     \Pi\big[\cJ_{k,\ell}^{1,1}+\cJ_{\ell,k}^{0,0}\big]&=&ac\frac{|\ell|^2-|k|^2}{|k-\ell|}\cdot   e_{k-\ell}^1,
          \\     \Pi\big[\cJ_{k,\ell}^{1,1}-\cJ_{\ell,k}^{0,0}\big]&=&ac\frac{|\ell|^2-|k|^2}{|k+\ell|}\cdot   e_{k+\ell}^1,
   \end{eqnarray}
  where $c$ is an absolutely non-zero constant independent of $k,\ell$ and may change from line to line.
\end{lemma}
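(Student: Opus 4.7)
The proof is essentially a careful bookkeeping exercise, so I will describe the plan without grinding through every sign. The idea is to combine Lemma \ref{p-19}, the standard product-to-sum trigonometric identities, and an explicit formula for the Leray projection on pure-frequency vector fields.

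First I would substitute the four formulas from Lemma \ref{p-19} into $\cJ_{k,\ell}^{0,1}=\mathbf{b}(e_k^0,e_\ell^1)+\mathbf{b}(e_\ell^1,e_k^0)$, noting that $\langle\ell,k^\perp\rangle=-\langle k,\ell^\perp\rangle$ so the same prefactor $a=\langle k,\ell^\perp\rangle/(|k||\ell|)$ (up to sign) appears in both summands. I then apply the identities
\[
\cos A\cos B=\tfrac12[\cos(A-B)+\cos(A+B)],\qquad \sin A\sin B=\tfrac12[\cos(A-B)-\cos(A+B)]
\]
to rewrite each term as a sum of $\cos((k\pm\ell)\cdot x)$ with constant vector coefficients. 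Grouping by frequency gives the closed form for $\cJ_{k,\ell}^{0,1}$ claimed in the statement; swapping $k$ and $\ell$ immediately yields $\cJ_{\ell,k}^{0,1}$, and adding the two produces \eqref{p-70} (the $\cos((k-\ell)\cdot x)$ contributions cancel by antisymmetry of the vector coefficient $(\ell_2+k_2,-\ell_1-k_1)^T$, while the $\cos((k+\ell)\cdot x)$ parts double up). The difference $\cJ_{k,\ell}^{0,1}-\cJ_{\ell,k}^{0,1}$ is handled symmetrically. The formulas involving $\cJ_{k,\ell}^{1,1}$ and $\cJ_{\ell,k}^{0,0}$ are proved the same way, except one uses the $\sin\cos$ products in Lemma \ref{p-19}; the resulting sum now lives in $e^1$ rather than $e^0$ modes.

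Next, to pass from $2a\cos((k+\ell)\cdot x)(\ell_2-k_2,-\ell_1+k_1)^T$ to its Leray projection, I would use the basic identity
\[
\Pi\bigl[\cos(m\cdot x)\,w\bigr]=\cos(m\cdot x)\Bigl(w-\tfrac{\langle w,m\rangle}{|m|^2}m\Bigr),
\]
valid for any constant vector $w$ and nonzero $m\in\mZ^2$. A short computation with $m=k+\ell$ and $w=(\ell_2-k_2,-\ell_1+k_1)^T$ shows that $\langle w,m\rangle=-2\langle k,\ell^\perp\rangle$, and taking inner product of $w-\tfrac{\langle w,m\rangle}{|m|^2}m$ with $(k+\ell)^\perp$ yields $|k|^2-|\ell|^2$. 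Hence the projected vector field is a scalar multiple of $(k+\ell)^\perp\cos((k+\ell)\cdot x)$, and comparing with $e_{k+\ell}^0=-\tfrac{1}{|k+\ell|}(k+\ell)^\perp\cos((k+\ell)\cdot x)$ gives \eqref{p-69} with an absolute nonzero constant $c$ absorbing the universal factor $2$ and the sign. The other three projection identities follow by the same recipe, replacing $k+\ell$ by $k-\ell$ where appropriate and using the $\sin$-type basis functions $e_{k\pm\ell}^1$ for the $\cJ^{1,1}$/$\cJ^{0,0}$ identities.

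There is no conceptual obstacle here; the only thing to be careful about is sign tracking, because three different $\perp$-type vectors appear (namely $\ell^\perp=(-\ell_2,\ell_1)$, the conjugate form $(\ell_2,-\ell_1)$ coming from the definition of $e_\ell^0$, and $(k+\ell)^\perp$ arising from the projection), and a misplaced sign would cause the crucial factor $|\ell|^2-|k|^2$ to come out as $|k|^2-|\ell|^2$ (or worse, to vanish). Organising the computation around the single identity $\langle(\ell-k)^\perp,(k+\ell)^\perp\rangle=\langle\ell-k,\ell+k\rangle=|\ell|^2-|k|^2$ (and its analogue with $\ell+k$ replaced by $\ell-k$) keeps the signs under control and makes the nondegeneracy condition $|k|\neq|\ell|$ appearing in \eqref{p-22} completely transparent.
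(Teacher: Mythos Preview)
Your approach is correct and essentially the same as the paper's: both compute $\cJ_{k,\ell}^{0,1}$ directly from Lemma \ref{p-19} via product-to-sum identities, then project. The only cosmetic difference is that the paper obtains \eqref{p-69} by taking the $H_1$-inner product of \eqref{p-70} with the single basis element $e_{k+\ell}^0$, whereas you use the pointwise Leray formula $\Pi[\cos(m\cdot x)\,w]=\cos(m\cdot x)\bigl(w-\tfrac{\langle w,m\rangle}{|m|^2}m\bigr)$; these are equivalent, and your organising identity $\langle(\ell-k)^\perp,(k+\ell)^\perp\rangle=|\ell|^2-|k|^2$ is exactly what the paper's inner-product computation encodes.
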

\begin{proof}
Since all of the above can be proved in a similar way by direct calculating, we only give the proof of (\ref{p-69}).

  It is from (\ref{p-70}) that
  \begin{eqnarray*}
   \Pi\big[ \cJ_{k,\ell}^{0,1}+ \cJ_{\ell,k}^{0,1}\big]
  &=& \langle 2a\cos((k+\ell)x)(\ell_2-k_2,-\ell_1+k_1)^T, e_{k+\ell}^0\rangle  e_{k+\ell}^0
  \\ &=& ac\frac{1}{|k+\ell|}\cdot (|\ell|^2-|k|^2)e_{k+\ell}^0,
  \end{eqnarray*}
  where $\langle \cdot, \cdot \rangle$ denotes the inner product on $H_1 $, $c$ is a non-zero constant.
\end{proof}

By lemma \ref{p-48} and (\ref{p-49}), we can generate suitable directions in the $u$ component.
\begin{lemma}\label{p-18}
Let $a=\frac{\langle k,\ell^{\bot}\rangle}{|k||\ell|}$,  then for  some  absolutely non-zero constant $c$ which is independent of  $k,\ell$,  the following inequalities hold.
   \begin{eqnarray*}
 J_{k,\ell}^{0,1}+ J_{\ell,k}^{0,1}&=& ac\frac{1}{|k+\ell|}\cdot (|\ell|^2-|k|^2)\psi_{k+\ell}^0,
   \\
J_{k,\ell}^{0,1}-J_{\ell,k}^{0,1}&=&ac\frac{-|\ell|^2+|k|^2}{|k-\ell|}\cdot   \psi_{k-\ell}^0,
     \\
     J_{k,\ell}^{1,1}+J_{\ell,k}^{0,0}&=&ac\frac{|\ell|^2-|k|^2}{|k-\ell|}\cdot   \psi_{k-\ell}^1,
          \\     J_{k,\ell}^{1,1}-J_{\ell,k}^{0,0}&=&ac\frac{|\ell|^2-|k|^2}{|k+\ell|}\cdot   \psi_{k+\ell}^1.
   \end{eqnarray*}
\end{lemma}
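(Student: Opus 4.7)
The plan is a direct transcription from Lemma \ref{p-48} into Lemma \ref{p-18}, exploiting the structural definition \eqref{p-49}. Each $J_{k,\ell}^{m,m'}$ is a constant vector field on $H=H_1\times H_2$ whose magnetic component is identically zero and whose velocity component equals $-\Pi\,\cJ_{k,\ell}^{m,m'}$. Consequently any linear combination of such $J$'s remains concentrated in the velocity slot and is obtained by taking exactly the same linear combination of the $-\Pi\,\cJ$'s, with the magnetic slot padded by zero.

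Granted this observation, for each of the four assertions I would simply read off the corresponding line of Lemma \ref{p-48}. For the first identity, for instance,
\begin{eqnarray*}
J_{k,\ell}^{0,1}+J_{\ell,k}^{0,1}
 &=& \begin{pmatrix}-\Pi\bigl[\cJ_{k,\ell}^{0,1}+\cJ_{\ell,k}^{0,1}\bigr]\\ 0\end{pmatrix}
 \\ &=& -ac\,\frac{|\ell|^{2}-|k|^{2}}{|k+\ell|}\begin{pmatrix}e_{k+\ell}^{0}\\ 0\end{pmatrix}
 \\ &=& ac'\,\frac{|\ell|^{2}-|k|^{2}}{|k+\ell|}\,\psi_{k+\ell}^{0},
\end{eqnarray*}
where $c'=-c$ is again a non-zero constant independent of $k,\ell$; the re-definition is allowed by the convention stated in Lemma \ref{p-48} that $c$ may change from line to line. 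The remaining three identities follow by the very same mechanism from the other three lines of Lemma \ref{p-48}, invoking $\psi_j^{0}=(e_j^{0},0)^{T}$ and $\psi_j^{1}=(e_j^{1},0)^{T}$.

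There is no genuine obstacle here: the substantive analytic work — the trigonometric product-to-sum formulae for $\mathbf{b}$ in Lemma \ref{p-19} and the computation of the two-dimensional Leray projection onto the span of $\{e_{k\pm\ell}^{0},e_{k\pm\ell}^{1}\}$ in Lemma \ref{p-48} — has already been carried out upstream. Lemma \ref{p-18} is then precisely the statement that these scalar identities in $H_1$ ascend without modification to the full phase space $H_1\times H_2$ once the $J_{k,\ell}^{m,m'}$ are recognised, via \eqref{p-49}, as the canonical lifts of $-\Pi\,\cJ_{k,\ell}^{m,m'}$; the role of the lemma is purely to package the Lie-bracket computation into a form suitable for the inductive spanning argument in Section 4.
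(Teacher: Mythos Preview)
Your argument is correct and is precisely the approach the paper takes: the lemma is stated immediately after the remark that it follows from Lemma~\ref{p-48} and \eqref{p-49}, with no further proof given. Your explicit unpacking of one case and absorption of the sign into the free constant $c$ is exactly the intended reading.
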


\subsection{Covering magnetic direction}

Likewise, we will need the following notations for the $b$ direction, which are also obtained through the iteration of Lie brackets computation.
\begin{eqnarray}
\nonumber   \cY_k^{m}(U):&=& \big[F(U),\psi_k^{m}\big]
  \\ \nonumber   &=&
  A^{\alpha,\beta}\psi_k^{m}+B(\psi_k^{m},U)+B(U,\psi_k^{m})
  \\ \nonumber
  Z_{k,\ell}^{m,m'}:&=&- \Big[ \cY_k^{m}(U),\sigma_{\ell}^{m'}\Big]
  \\ \nonumber    &=& B(\psi_k^{m},\sigma_{\ell}^{m'})+B(\sigma_{\ell}^{m'},\psi_k^{m})
  \\ \nonumber   &=&
  \left(\begin{split}
    &0
    \\
    &\Pi \big[ \mathbf{b}(e_k^{m},e_{\ell}^{m'})-\mathbf{b}(e_\ell^{m'},e_k^{m})\big]
  \end{split}
  \right)
    \\ \label{p-50}   &=&
  \left(\begin{split}
    &0
    \\
    &\Pi\cZ_{k,\ell}^{m,m'}
  \end{split}
  \right),
\end{eqnarray}
where
\begin{eqnarray*}
\cZ_{k,\ell}^{m,m'}:=\mathbf{b}(e_k^{m},e_{\ell}^{m'})-\mathbf{b}(e_\ell^{m'},e_k^{m}).
\end{eqnarray*}
The following lemma is the counterpart of Lemma \ref{p-48}.
\begin{lemma}\label{p-51}
 Denote  $a=\frac{\langle k,\ell^{\bot}\rangle}{|k||\ell|}$,  then
for   some  absolutely non-zero constant   $c$  which is independent of   $k,\ell$(It may changes from line to line),  the following equalities hold.
\begin{eqnarray*}
 \Pi \cZ_{k,\ell}^{0,1} +\Pi \cZ_{\ell,k}^{0,1}  &=&ac|k-\ell |e_{k-\ell}^0,
\\
\Pi \cZ_{k,\ell}^{0,1} -\Pi \cZ_{\ell,k}^{0,1} &=&ac|k+\ell|e_{k+\ell}^0,
  \\ \Pi \cZ_{k,\ell}^{1,1}+\Pi \cZ_{k,\ell}^{0,0}&=&ac|k-\ell|e_{k-\ell}^1,
 \\  \Pi \cZ_{k,\ell}^{1,1}-\Pi \cZ_{k,\ell}^{0,0}&=&ac|k+\ell|e_{k+\ell}^1.
\end{eqnarray*}
\end{lemma}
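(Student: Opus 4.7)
The plan is to mirror the proof of Lemma \ref{p-48}: start from the explicit formulas in Lemma \ref{p-19}, apply product-to-sum trigonometric identities to separate the $(k\pm\ell)\cdot x$ frequencies, and then project onto the one-dimensional subspaces spanned by $e_{k\pm\ell}^0$ or $e_{k\pm\ell}^1$. The only structural change is that $\cZ_{k,\ell}^{m,m'}$ is a difference rather than a sum of two $\mathbf{b}$-terms, but combined with the antisymmetry $\langle \ell,k^{\bot}\rangle = -\langle k,\ell^{\bot}\rangle$ the two contributions reinforce one another so that the resulting direction vectors align perfectly with the target basis elements.

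To illustrate the case $(m,m')=(0,1)$: invoking Lemma \ref{p-19} together with the antisymmetry identity yields
\begin{eqnarray*}
\cZ_{k,\ell}^{0,1} = a\cos(k\cdot x)\cos(\ell\cdot x)(\ell_2,-\ell_1)^T - a\sin(k\cdot x)\sin(\ell\cdot x)(k_2,-k_1)^T.
\end{eqnarray*}
Expanding via product-to-sum formulas and noting that the substitution $k \leftrightarrow \ell$ (which flips $a \to -a$) produces $\cZ_{\ell,k}^{0,1}$, the combinations $\cZ_{k,\ell}^{0,1}\pm\cZ_{\ell,k}^{0,1}$ isolate either the $\cos((k-\ell)\cdot x)$ part or the $\cos((k+\ell)\cdot x)$ part.

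The crucial observation is that each surviving direction vector is precisely a scalar multiple of the direction vector of the target basis element. For instance the coefficient of $\cos((k+\ell)\cdot x)$ reads $(\ell_2+k_2,-\ell_1-k_1)^T = ((k+\ell)_2,-(k+\ell)_1)^T$, which is exactly $|k+\ell|$ times the direction vector defining $e_{k+\ell}^0$. This direct alignment is why the projection coefficient is a clean $|k\pm\ell|$, in contrast to Lemma \ref{p-48} where the sum structure $\mathbf{b}+\mathbf{b}$ coupled each direction vector with the "opposite" frequency and thus forced the inner-product computation $(\ell+k)\cdot(\ell-k)=|\ell|^2-|k|^2$ together with a division by $|k\pm\ell|$ in the normalization.

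The remaining cases $(m,m')=(1,1)$ and $(0,0)$ proceed analogously, except that the identities $\sin\cos \pm \cos\sin = \sin((k\pm\ell)\cdot x)$ place the output in the $e_{k\pm\ell}^1$ subspace, and one must pair $\cZ_{k,\ell}^{1,1}$ with $\cZ_{k,\ell}^{0,0}$ (rather than with $\cZ_{\ell,k}^{\cdot,\cdot}$). The only real obstacle is disciplined sign bookkeeping: the antisymmetry of $\langle \cdot,\cdot^{\bot}\rangle$ effectively converts the $\mathbf{b}$-difference into a $\mathbf{b}$-sum with renamed arguments, and one must carefully track the orientation of $(\ell_2\pm k_2,-\ell_1\mp k_1)^T$ relative to the fixed basis direction $((k\pm\ell)_2,-(k\pm\ell)_1)^T$. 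Since the lemma asserts equality only up to a nonzero absolute constant $c$ that may change from line to line, these signs affect only the intermediate bookkeeping and not the final statement.
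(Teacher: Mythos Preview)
Your proposal is correct and follows essentially the same route as the paper: invoke Lemma~\ref{p-19}, use product-to-sum identities to separate the $(k\pm\ell)\cdot x$ frequencies, and then project. The only cosmetic difference is that the paper first regroups the four $\mathbf{b}$-terms in $\cZ_{k,\ell}^{0,1}+\cZ_{\ell,k}^{0,1}$ as $\bigl(\mathbf{b}(e_k^0,e_\ell^1)-\mathbf{b}(e_k^1,e_\ell^0)\bigr)+\bigl(\mathbf{b}(e_\ell^0,e_k^1)-\mathbf{b}(e_\ell^1,e_k^0)\bigr)$ so that each bracket carries a single direction vector before applying $\cos A\cos B+\sin A\sin B=\cos(A-B)$, whereas you expand $\cZ_{k,\ell}^{0,1}$ by itself and then take the $\pm$ combination; both routes land on the same expression $a\cos((k\mp\ell)\cdot x)(\ell_2\mp k_2,-\ell_1\pm k_1)^T$. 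Your observation that this vector is already $\pm|k\mp\ell|\,e_{k\mp\ell}^0$ (so $\Pi$ is essentially trivial) is a clean way to read off the coefficient and explains nicely the contrast with Lemma~\ref{p-48}.
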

\begin{proof}
By the   definition of
$\cZ_{k,\ell}^{m,m'}$, we get
  \begin{eqnarray}
\label{1} \cZ_{k,\ell}^{0,1}+  \cZ_{\ell,k}^{0,1}&=& \big(\mathbf{b}(e_k^{0},e_{\ell}^{1})-\mathbf{b}(e_k^{1},e_\ell^{0})\big)+\big(\mathbf{b}(e_\ell^{0},e_{k}^{1}-\mathbf{b}(e_\ell^{1},e_k^{0}))\big).
  \end{eqnarray}
 By  Lemma \ref{p-19},  it holds that
 \begin{eqnarray}
    \nonumber && \big(\mathbf{b}(e_k^{0},e_{\ell}^{1})-\mathbf{b}(e_k^{1},e_\ell^{0})\big)
    \\ \nonumber && = \frac{\langle k,\ell^{\bot} \rangle  }{|k||\ell|}\cos(k\cdot x)\cos(\ell \cdot x)(\ell_2,-\ell_1)^T-\frac{\langle k,\ell^{\bot} \rangle }{|k||\ell|}\sin(k \cdot x)\sin(\ell  \cdot x)(-\ell_2,\ell_1)^T
    \\ \label{2}&&= \frac{\langle k,\ell^{\bot} \rangle  }{|k||\ell|} (\ell_2,-\ell_1)^T \cos (k-\ell)x.
  \end{eqnarray}
  With a similar way,  one sees that
  \begin{eqnarray}
   \label{3} && \big(\mathbf{b}(e_\ell^{0},e_{k}^{1})-\mathbf{b}(e_\ell^{1},e_k^{0})\big)
         = \frac{\langle k,\ell^{\bot} \rangle  }{|k||\ell|}(-k_2,k_1)^T \cos (k-\ell)x.
  \end{eqnarray}
  Combining (\ref{3})(\ref{2}) with (\ref{1}), we obtain
 \begin{eqnarray*}
 \cZ_{k,\ell}^{0,1}+  \cZ_{\ell,k}^{0,1}=a(\ell_2-k_2,-\ell_1+k_1)^T \cos (k-\ell)x.
  \end{eqnarray*}

Therefore,
  \begin{eqnarray*}
 \Pi \cZ_{k,\ell}^{0,1} +\Pi \cZ_{\ell,k}^{0,1}
 &=& \langle a\cos\big((k-\ell)x\big) ( \ell_2-k_2,-\ell_1+k_1)^T, e_{k-\ell}^0\rangle e_{k-\ell}^0
 \\ &=&a\Big\langle \cos\big((k-\ell)x\big) ( \ell_2-k_2,-\ell_1+k_1)^T, \cos\big((k-\ell)x\big) ( \frac{k_2-\ell_2}{|k-\ell|},\frac{-k_1+\ell_1}{|k-\ell|})^T\Big\rangle e_{k-\ell}^0
 \\ &=& ac \frac{|k_2-\ell_2|^2+|k_1-\ell_1|^2 }{|k-\ell|}e_{k-\ell}^0=ac|k-\ell |e_{k-\ell}^0 ,
  \end{eqnarray*}
  where
  \begin{eqnarray*}
    c&=& -\int_{[-\pi,\pi]^2} \cos^2\big((k_1-\ell_1)x_1+(k_2-\ell_2)x_2\big) d x_1dx_2
    \\ &=&  -\int_{[-\pi,\pi]^2} \frac{1+\cos 2\big((k_1-\ell_1)x_1+(k_2-\ell_2)x_2\big) }{2}d x_1dx_2
    \\ &=& -\frac{1}{2}(2\pi)^2.
  \end{eqnarray*}
 The proof of other equalities are similar.
\end{proof}

Likewise,  by Lemma \ref{p-51} and (\ref{p-50}) we can generate suitable directions in the $b$ component.
\begin{lemma}\label{h-3}
 Denote  $a=\frac{\langle k,\ell^{\bot}\rangle}{|k||\ell|}$,  then
for   some  absolutely non-zero constant   $c$  which is independent of   $k,\ell$(It may changes from line to line),    the following equalities hold.
\begin{eqnarray*}
Z_{k,\ell}^{0,1} +Z_{\ell,k}^{0,1}  &=&ac|k-\ell |\sigma_{k-\ell}^0,
\\
Z_{k,\ell}^{0,1} -Z_{\ell,k}^{0,1} &=&ac|k+\ell|\sigma_{k+\ell}^0,
  \\ Z_{k,\ell}^{1,1}+Z_{k,\ell}^{0,0}&=&ac|k-\ell|\sigma_{k-\ell}^1,
 \\  Z_{k,\ell}^{1,1}-Z_{k,\ell}^{0,0}&=&ac|k+\ell|\sigma_{k+\ell}^1.
\end{eqnarray*}
\end{lemma}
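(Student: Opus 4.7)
The plan is to observe that Lemma \ref{h-3} is a direct componentwise consequence of Lemma \ref{p-51} together with the definition of $Z_{k,\ell}^{m,m'}$ in (\ref{p-50}). Indeed, by construction
\[
Z_{k,\ell}^{m,m'}(U) \;=\; \left(\begin{matrix} 0 \\ \Pi\,\cZ_{k,\ell}^{m,m'} \end{matrix}\right), \qquad \sigma_{k\pm\ell}^{m} \;=\; \left(\begin{matrix} 0 \\ e_{k\pm\ell}^{m} \end{matrix}\right),
\]
so any linear combination of the $Z_{k,\ell}^{m,m'}$ has a vanishing first (velocity) component, and its second (magnetic) component is the same linear combination of the $\Pi\,\cZ_{k,\ell}^{m,m'}$. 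It therefore suffices to read off each of the four identities in Lemma \ref{h-3} from the four identities already proved in Lemma \ref{p-51}.

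Concretely, first I would form $Z_{k,\ell}^{0,1}+Z_{\ell,k}^{0,1}$ and note that its second component equals $\Pi\cZ_{k,\ell}^{0,1}+\Pi\cZ_{\ell,k}^{0,1}=ac|k-\ell|\,e_{k-\ell}^{0}$ by Lemma \ref{p-51}, which is exactly the second component of $ac|k-\ell|\,\sigma_{k-\ell}^{0}$, yielding the first identity. The same template applied in turn to $Z_{k,\ell}^{0,1}-Z_{\ell,k}^{0,1}$, $Z_{k,\ell}^{1,1}+Z_{k,\ell}^{0,0}$, and $Z_{k,\ell}^{1,1}-Z_{k,\ell}^{0,0}$ produces the remaining three identities, with the constant $c$ being absorbed into the (line-by-line variable) nonzero constant of the statement.

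Since the hard trigonometric identities and projections have already been carried out in Lemma \ref{p-51}, there is no genuine obstacle here: the argument is just a bookkeeping step reflecting the fact that $Z_{k,\ell}^{m,m'}$ lives in the magnetic slot of $H_1\times H_2$, and its projection onto the $\sigma_{k\pm\ell}^{m}$ directions is governed entirely by the inner product of $\Pi\cZ_{k,\ell}^{m,m'}$ with $e_{k\pm\ell}^{m}$ in $H_2$. The only care needed is to record that the constants $c$ appearing in Lemma \ref{p-51} may differ from line to line, which is explicitly allowed in the statement of Lemma \ref{h-3}.
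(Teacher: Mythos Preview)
Your proposal is correct and matches the paper's own reasoning: the paper does not spell out a separate proof of Lemma~\ref{h-3} but simply notes ``by Lemma~\ref{p-51} and (\ref{p-50}) we can generate suitable directions in the $b$ component,'' which is exactly the componentwise bookkeeping you describe. There is nothing to add.
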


In conclusion, we give an illustration  in Figure  \ref{t2}   how the new directions generated from the existing directions    via  the iterations  of the chain of  bracket computations. The construction is interesting that in the upper half part $\psi$'s are generated by $\sigma$'s, while in the lower half part $\sigma$'s are generated by $\psi$'s. This antisymmetric relationship is originated from the advective structure in $B$.

\tikzstyle{block} = [rectangle, draw, fill=blue!20, text width=4em, text centered, rounded corners]
\tikzstyle{hugeBlock} = [rectangle, draw, fill=blue!20,
text width=5em, text centered, rounded corners, minimum height=4em]
\tikzstyle{line} = [draw, -latex]

\makeatletter

\def\@captype{figure}
\begin{figure*}[h]
\begin{tikzpicture}[shorten >=5pt,node distance=8cm]
\node(zt) at (0,3){$\sigma_{k}^{m},k\in \cZ_{2n}$};
\node(ft) at (3,3){$Y_k^m(U)$};
\node(et) at (6,3){$J_{k,\ell}^{m,m'}(U)$};
\node(tt) at (9.5,3){$\psi_{k\pm \ell}^m$};
\node(oo) at (-1.5, 1.5){$\sigma_k^m,k\in \cZ_{2n+2}$};
\node(tz) at (9.5,0){$\psi_k^m,k\in\cZ_{2n+1}$};
\node(ez) at (6,0){$\cY_k^m(U)$};
 \node(fz) at (3,0){$Z_{k,\ell}^{m,m'}$};
 \node(zz) at (0,0){$\sigma_{k\pm \ell}^{m}$};
\node() at (7.65,0.25){\tiny $[F(U),\cdot]$  };
\node() at (1.5,3.25){\tiny $[F(U),\cdot]$  };
\node()  at (4.5,3.25){\tiny $-[\cdot, \sigma_\ell^{m'}]$ };
\node()  at (4.5,0.25){\tiny $-[\cdot, \sigma_\ell^{m'}]$ };

\node()  at (7.7,3.25){\tiny  lemma \ref{p-18} };
\node()  at (1.65,0.25){\tiny  lemma \ref{h-3} };

\node() at (-0.75, 2.28){\tiny $n=n+1$  };

\draw[dashed, ->,green](fz) --node{} (zz);

\path[line, ->](zt)-- node{} (ft);
\path[line, ->](ft)-- node{} (et);
\draw[dashed, ->,green](et) --node{} (tt);
      \path[line,->](tz) -- (ez);
         \path[line,->](ez) -- (fz);
   \path[line,double,yellow ](zz) -- (oo);
      \path[line,double, yellow](tt) -- (tz);
   \draw[dotted, ->,red](oo) --node{} (zt);
\end{tikzpicture}
\caption{\footnotesize  An illustration  of   how the new directions generated from the existing directions    via  the iterations  of the chain of  bracket computations.
In this figure, $m,m'\in \{0,1\}, \ell\in \cZ_0.$
Solid arrows mean that the new function is generated from a Lie bracket, with the type  of bracket indicated  above the arrow. Dashed arrows with  green color    signify that the new element is generated as a linear combination of elements from the previous position.
The  dotted   arrows  with  red color     shows that the process is iterative.  The doubled arrow with  yellow color   (\textcolor[rgb]{1.00,1.00,0.00}{$\Rightarrow$})  shows that $k\pm \ell $ is a element belongs to $\cZ_{2n+1}$ or $\cZ_{2n+2}$ actually.
 }
\label{t2}
\end{figure*}
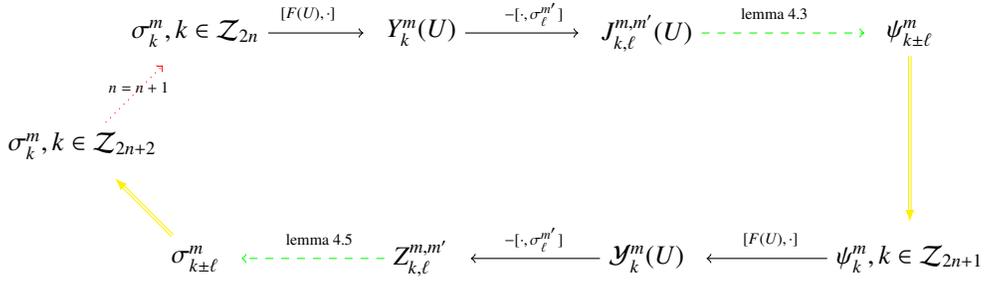

\section{Spectral properties of $\cM$}

For any $\alpha>0, N\in \mN,$ we define
\begin{eqnarray*}
  \cS_{\alpha,N}:=\{\phi\in H:\|P_N\phi\|^2\geq \alpha \|\phi\|^2\}.
\end{eqnarray*}

The aim of this section is to prove the following theorem, which gives information on the probability of eigenvectors with sizable projections in the unstable directions to have small eigenvalues. Broadly speaking, this provides us the invertibility of the Malliavin matrix on the space spanned by the unstable directions. Since it is finite dimensional under current circumstances, one can thus formulate a control problem through the Malliavin integration by parts formula to obtain the gradient estimate on the Markov semigroup, which is extremely useful in establishing ergodicity (c.f. Proposition \ref{p-28}).

\begin{thm}\label{p-9}
For any $N\geq 1,\alpha\in (0,1]$ and $\eta>0,$ there exists a positive constant $\eps^*=\eps^*(\alpha,\eta,N,T)>0,$ such that, for any $n\geq 0,$ and $\eps\in (0,\eps^*]$, there exists a measurable set $\Omega_\eps=\Omega_\eps(\alpha,N,T)\subseteq \Omega$ satisfying
  \begin{eqnarray}\label{p-54}
    \mP(\Omega_\eps^c )\leq r(\eps )\exp{(\eta \|U_0\|^2)},
  \end{eqnarray}
    where $r=r(\alpha,\eta,N,T):(0,\eps^*]\rightarrow (0,\infty)$ is a non-negative, decreasing function with $\lim_{\eps \rightarrow 0}r(\eps)=0,$ and on the set $\Omega_\eps,$
  \begin{eqnarray}\label{p-55}
\inf_{\phi\in \cS_{\alpha,N}}\frac{\langle \cM_{0,T}\phi,\phi\rangle}{\|\phi\|^2}\geq \eps.
  \end{eqnarray}
\end{thm}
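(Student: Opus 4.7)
The plan is to argue by contradiction: suppose on an event of non-negligible probability there exists a unit vector $\phi\in\cS_{\alpha,N}$ with $\langle\cM_{0,T}\phi,\phi\rangle<\eps$. Since $\cM_{0,T}=\cA_{0,T}\cA_{0,T}^*$ and $(\cA_{0,T}^*\phi)(s)=\cQ_b^*\cK_{s,T}\phi$, this forces
$$
\int_0^T\|\cQ_b^*\cK_{s,T}\phi\|^2\,ds<\eps.
$$
Coupling this $L^2$-in-time smallness with the $C^{1/4}$-in-time regularity of $s\mapsto\cQ_b^*\cK_{s,T}\phi$ furnished by Lemmas \ref{p-29} and \ref{p-10}, together with a standard interpolation applied on a high-probability set where the moment bounds of Section 3 hold, upgrades smallness to the pointwise estimate $\sup_{s\in[T/2,T]}|\langle\sigma_k^m,\cK_{s,T}\phi\rangle|\le\eps^{\delta_0}$ for every $k\in Z_0,\,m\in\{0,1\}$ and some $\delta_0>0$.

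The core of the proof is an induction that propagates pointwise smallness across the sets $\cZ_n$ of (\ref{p-22}) using Lie brackets. For any Fr\'echet-differentiable vector field $E:H\to H$ with constant $\nabla E$, the observable $X_s:=\langle E(U_s),\cK_{s,T}\phi\rangle$ is a semimartingale whose absolutely continuous part has integrand $\langle[F(U_s),E(U_s)],\cK_{s,T}\phi\rangle$ (since $\partial_s\cK_{s,T}\phi=-(\nabla F(U_s))^*\cK_{s,T}\phi$) and whose martingale part reads $\sum_{\ell,m'}\alpha_\ell^{m'}\langle\nabla E(U_s)\sigma_\ell^{m'},\cK_{s,T}\phi\rangle\,dW_s^{\ell,m'}$. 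A quantitative Norris-type lemma, applied on a further high-probability set (using the Malliavin-derivative bounds of the last lemma in Section 3 together with Lemma \ref{p-10} to handle the future-adaptedness of $\cK_{\cdot,T}\phi$), transfers pointwise smallness of $X_s$ to pointwise smallness of both the drift integrand and of each martingale coefficient, at the cost of raising a fractional power $\delta<1$ of $\eps$ per application. Starting from $E=\sigma_k^m$ and then applying a second Norris step against $\sigma_\ell^{m'}$, the explicit identifications of Lemma \ref{p-48} and Lemma \ref{h-3} convert smallness into smallness of the $\psi_{k\pm\ell}^{\bullet}$ components; taking $E=\psi_j^m$ and iterating with Lemma \ref{p-51} and Lemma \ref{p-18} then generates the $\sigma_{j\pm\ell}^{\bullet}$ components. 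This is precisely the antisymmetric $\sigma\leftrightarrow\psi$ scheme depicted in Figure \ref{t2}, with one full round advancing from $\cZ_{2n}$-indexed $\sigma$-modes to $\cZ_{2n+2}$-indexed $\sigma$-modes through $\cZ_{2n+1}$-indexed $\psi$-modes.

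Hypothesis \ref{p-21} guarantees that $\{k\in\mZ^2\setminus\{(0,0)\}:|k|\le N\}\subseteq\bigcup_{n=0}^{n_N}\cZ_n$ for some finite $n_N=n_N(N)$; hence at most $2n_N$ Norris iterations suffice to control $|\langle\sigma_k^m,\cK_{s,T}\phi\rangle|$ and $|\langle\psi_k^m,\cK_{s,T}\phi\rangle|$ for all $0<|k|\le N,\,m\in\{0,1\}$. Evaluating at $s=T$ (at which $\cK_{T,T}\phi=\phi$) yields a constant $C_N$ and an exponent $\delta_N=\delta_0\delta^{2n_N}>0$ such that, on the intersection of all good sets above,
$$
\|P_N\phi\|^2\le C_N\,\eps^{\delta_N}.
$$
The complement of this intersection has probability dominated by $r(\eps)\exp(\eta\|U_0\|^2)$, where $r(\eps)$ is a positive power of $\eps$ coming from Chebyshev applied to the exponential moment bounds of Lemmas \ref{p-30}--\ref{p-10}. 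For $\eps\le\eps^*(\alpha,\eta,N,T)$ small enough, the inequality $\|P_N\phi\|^2\le C_N\eps^{\delta_N}<\alpha\le\|P_N\phi\|^2$ gives the desired contradiction, so (\ref{p-55}) holds on $\Omega_\eps$, with (\ref{p-54}) as the probability estimate.

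The principal obstacle is the rigorous Norris step in this infinite-dimensional, non-adapted hypoelliptic setting: the coefficients of the semimartingale $X_s$ involve $\cK_{\cdot,T}\phi$, which depends on the \emph{future} of the Brownian path. Handling this requires replacing those factors by adapted approximations whose errors are controlled in $L^p$ by the Malliavin-derivative estimates of Section 3 ($\cD^j\cA_{s,t}$, $\cD^j\cA_{s,t}^*$, $\cD^jJ_{s,t}$), plus the regularity input of Lemma \ref{p-10}. Keeping the accumulated polynomial losses uniform across all $2n_N$ iterations and matching them against the layered decomposition in Hypothesis \ref{p-21} is what ensures that the final exponent $\delta_N$ remains strictly positive and independent of $\phi$.
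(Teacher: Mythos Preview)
Your overall architecture is correct and matches the paper: start from $\langle\cM_{0,T}\phi,\phi\rangle<\eps$, upgrade to pointwise smallness of $\langle\sigma_k^m,\cK_{t,T}\phi\rangle$ on $[T/2,T]$, then iterate through the $\sigma\to Y\to J\to\psi\to\cY\to Z\to\sigma$ Lie-bracket chain of Figure~\ref{t2}, and close by contradiction against $\|P_N\phi\|^2\ge\alpha$. The paper packages the iteration as Proposition~\ref{h-5} (upper bound on $\langle Q_N\phi,\phi\rangle$) versus Proposition~\ref{p-53} (lower bound), and the proof of Theorem~\ref{p-9} is then the two-line contradiction you describe.

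However, your technical mechanism for the ``bracket-extraction'' steps is different from the paper's and, as written, has a real gap. You treat $X_s=\langle E(U_s),\cK_{s,T}\phi\rangle$ as an It\^o semimartingale and invoke a Norris-type lemma to peel off the martingale coefficients $\langle\nabla E\,\sigma_\ell^{m'},\cK_{s,T}\phi\rangle$. But $\cK_{s,T}\phi$ depends on $\{W_r:r\in[s,T]\}$, so the ``martingale part'' you write is \emph{not} an It\^o integral, and the classical Norris lemma does not apply. Your proposed repair via adapted approximations controlled by the Malliavin bounds on $\cD^j\cA_{s,t}$, $\cD^jJ_{s,t}$ is not made precise, and it is far from clear how those estimates produce an adapted surrogate for $\cK_{s,T}\phi$ with a quantitatively small error uniformly in $\phi$.

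The paper sidesteps this entirely by a pathwise device: write $U=\bar U+\cQ_bW$ with $\bar U$ solving the random ODE~(\ref{p-5}), so that $t\mapsto\langle\cK_{t,T}\phi,\sigma_k^m\rangle$ is genuinely $C^1$ with derivative $\langle\cK_{t,T}\phi,Y_k^m(U_t)\rangle$. The first transfer (Lemmas~\ref{h-10}, \ref{p-16}, \ref{h-1}) is then the deterministic interpolation Lemma~\ref{lemma fght}, not Norris. For the crucial second transfer, one expands the affine map $Y_k^m$ as
\[
\langle\cK_{t,T}\phi,Y_k^m(U_t)\rangle=\langle\cK_{t,T}\phi,Y_k^m(\bar U_t)\rangle-\sum_{\ell,m'}\alpha_\ell^{m'}\,\langle\cK_{t,T}\phi,J_{k,\ell}^{m,m'}\rangle\,W_t^{\ell,m'},
\]
a degree-one Wiener polynomial with coefficients that are Lipschitz in $t$ (Lemma~\ref{p-14}). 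The Hairer--Mattingly Wiener-polynomial theorem (Theorem~\ref{theorem 6.4}) --- a purely real-analytic statement about paths of $W$, requiring no adaptedness of the coefficients --- then forces each coefficient small on a set of controlled probability (Lemmas~\ref{p-17}, \ref{h-2}). This is what replaces your Norris step and eliminates the anticipativity problem altogether.

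In short: keep your global scheme, but replace the semimartingale/Norris argument by the shift $\bar U=U-\cQ_bW$ together with Lemma~\ref{lemma fght} and Theorem~\ref{theorem 6.4}. (Minor point: your citations are crossed --- Lemma~\ref{p-18} produces the $\psi$-directions and Lemma~\ref{h-3} the $\sigma$-directions, not the other way round.)
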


In order to prove this theorem, we will first introduce a series of quadratic forms $Q_N$ and their lower bounds, next in Subsection \ref{p-7} we introduce or recall some notational conventions and technical tools which will be used frequently. Then  we estimate upper bounds on $Q_N$ in Subsection \ref{p-8}. Finally in Subsection \ref{p-59}, we complete the proof of Theorem \ref{p-9}. To start with, denote
\begin{eqnarray*}
  \langle Q_N\phi,\phi\rangle:= \sum_{n=0}^N \sum_{k\in \cZ_{2n}, m\in \{0,1\}}|\langle \phi, \sigma_k^{m}\rangle|^2+
 \sum_{n=0}^N \sum_{k\in \cZ_{2n+1}, m\in \{0,1\}}  |\langle \phi, \psi_{k}^{m} \rangle|^2.
\end{eqnarray*}


Lower bounds on these Quadratic forms are fairly simple since we are merely focusing on $\phi \in\cS_{\alpha,N}.$

\begin{proposition}\label{p-53}
  Fix  any integer $N\in \mN$  and $\alpha\in (0,1]$,
  \begin{eqnarray*}
  \langle Q_N\phi,\phi\rangle \geq \frac{\alpha}{2}\|\phi\|^2
  \end{eqnarray*}
   holds  for every $\phi \in\cS_{\alpha,N}.$
\end{proposition}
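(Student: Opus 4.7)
The plan is to deduce the bound essentially from Parseval's identity, exploiting the fact that the inner products appearing in $\langle Q_N\phi,\phi\rangle$ are taken against elements of the orthonormal family $\{\sigma_k^m,\psi_k^m\}_{k\in\mZ^2\setminus\{0\},\,m\in\{0,1\}}$ of $H$. First I would expand the defining norm of $\cS_{\alpha,N}$ in this basis to get
\begin{equation*}
\|P_N\phi\|^2
=\sum_{\substack{0<|k|\leq N\\ m\in\{0,1\}}}\Bigl(|\langle \phi,\sigma_k^m\rangle|^2+|\langle \phi,\psi_k^m\rangle|^2\Bigr),
\end{equation*}
and then try to dominate this quantity termwise by $\langle Q_N\phi,\phi\rangle$, whose every summand is nonnegative.

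The second step is to invoke Hypothesis \ref{p-21} to show that for each $k$ with $0<|k|\leq N$ there are integers $n_0(k),n_1(k)$ such that $k\in\cZ_{2n_0(k)}$ and $k\in\cZ_{2n_1(k)+1}$. Since the frequency shell $\{k:0<|k|\leq N\}$ is finite, these indices are uniformly bounded by some $N^\ast=N^\ast(N)$. Taking the iteration depth in $Q_N$ to be at least $N^\ast$ (which is the regime used in Theorem \ref{p-9}; one may enlarge the depth parameter freely without affecting the statement since all summands are nonnegative), we get the inclusions
\begin{equation*}
\{k:0<|k|\leq N\}\subseteq\bigcup_{n=0}^{N}\cZ_{2n},\qquad
\{k:0<|k|\leq N\}\subseteq\bigcup_{n=0}^{N}\cZ_{2n+1}.
\end{equation*}
Under these inclusions every term on the right-hand side of the Parseval expansion of $\|P_N\phi\|^2$ appears among the summands of $\langle Q_N\phi,\phi\rangle$, giving
\begin{equation*}
\langle Q_N\phi,\phi\rangle\;\geq\;\|P_N\phi\|^2\;\geq\;\alpha\|\phi\|^2\;\geq\;\tfrac{\alpha}{2}\|\phi\|^2,
\end{equation*}
where the penultimate inequality is exactly the defining property of $\cS_{\alpha,N}$.

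The only genuinely nontrivial step is the matching of the two occurrences of $N$: the one inside $P_N$ is a Fourier frequency cut-off, whereas the one inside $Q_N$ is the depth of the bracket iteration, and Hypothesis \ref{p-21} only asserts coverage in the limit. The clean way out is the observation above that the two indices can be decoupled and the iteration depth freely enlarged; the extra factor of $\tfrac{1}{2}$ in the statement provides comfortable slack for this. Beyond this bookkeeping point the argument is essentially orthogonality, as anticipated by the author's own remark that the lower bound is straightforward.
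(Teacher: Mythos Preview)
Your argument is correct and is exactly what the paper has in mind: the authors' entire proof is the single sentence ``Its proof is trivial,'' and what they are gesturing at is precisely the Parseval-plus-coverage argument you wrote out. In particular, you have correctly identified and resolved the one genuinely non-trivial point the paper glosses over, namely that the symbol $N$ plays two different roles---a Fourier cut-off in $P_N$ and $\cS_{\alpha,N}$, versus a Lie-bracket iteration depth in $Q_N$---and that the literal statement of the proposition requires $\bigcup_{n\le N}\cZ_{2n}$ and $\bigcup_{n\le N}\cZ_{2n+1}$ to already cover $\{k:0<|k|\le N\}$, which Hypothesis~\ref{p-21} does not guarantee for every fixed $N$. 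Your fix (enlarge the depth parameter at no cost, since all summands in $Q_N$ are nonnegative) is the natural one, and is harmless for the downstream use in Theorem~\ref{p-9} and Proposition~\ref{h-5}. The factor $\tfrac12$ in the statement is indeed pure slack (it also conveniently absorbs the fact that the basis $\{\sigma_k^m,\psi_k^m\}$ is orthogonal but not orthonormal, which you did not mention but which does not affect the argument).
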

\begin{proof}
  Its proof  is trivial.
\end{proof}

\subsection{Preliminaries}\label{p-7}

Denote by $\bar{U}=U-\mathcal{Q}_bW,$  then
\begin{eqnarray}\label{p-5}
\left\{
\begin{split}
  & \partial_t \bar{U} =F(U)=F(\bar{U}+\mathcal{Q}_b W),
  \\ &\bar{U}_0=U_0,
  \end{split}
  \right.
\end{eqnarray}
and by expanding $U=\bar{U}+\mathcal{Q}_b W$ we find
\begin{eqnarray}
  Y_k^m(U)=Y_k^m(\bar{U})-\sum_{\ell  \in Z_0,m'\in \{0,1\}}\alpha_\ell^{m'}[Y_k^m(U),\sigma_\ell^{m'}]W^{\ell,m'},
\end{eqnarray}
and
\begin{eqnarray}
  \cY_k^m(U)=\cY_k^m(\bar{U})-\sum_{\ell  \in Z_0,m'\in \{0,1\}}\alpha_\ell^{m'}[\cY_k^m(U),\sigma_\ell^{m'}]W^{\ell,m'}.
\end{eqnarray}

  We introduce for $\alpha \in [0,1],\phi\in H $
\begin{eqnarray*}
  \cN_\alpha(\phi):&=& \max_{\ell \in  Z_0,m' \in\{0,1\}}\Big\{\| \langle \cK_{t,T}\phi,Y_k^m(\bar{U})\rangle\|_{C^\alpha}, \quad  |\alpha_\ell^{m'}|\cdot \| \langle   \cK_{t,T}\phi, [Y_k^m(U),\sigma_{\ell}^{m'}]   \rangle\|_{C^\alpha}\Big\},
  \\
\cM_\alpha(\phi):&=& \max_{\ell \in  Z_0,m' \in\{0,1\}}\Big\{\| \langle \cK_{t,T}\phi,\cY_k^m(\bar{U})\rangle\|_{C^\alpha},  \quad |\alpha_\ell^{m'}|\cdot \| \langle   \cK_{t,T}\phi, [\cY_k^m(U),\psi_\ell^{m'}]   \rangle\|_{C^\alpha}\Big\},
\end{eqnarray*}
where  for any function $g:[T/2,T]\rightarrow \mR$, ~$\|g\|_{C^\alpha}$ is defined by
\begin{eqnarray*}
\|g\|_{C^\alpha}:=\|g\|_{C^\alpha[T/2,T]}:=\sup_{\mbox{\tiny$\begin{array}{c}
t_1\neq t_2\\
t_1,t_2\in [T/2,T]
\end{array}$}}
\frac{|g(t_1)-g(t_2)|}{|t_1-t_2|^{\alpha}},
\end{eqnarray*}
and   for $\alpha=0,$ $\|g\|_{C^0}$ is defined by
\begin{eqnarray*}
\|g\|_{C^0}:=\|g\|_{C^0[T/2,T]}:=\sup_{t\in [T/2,T]}|g(t)|.
\end{eqnarray*}

\begin{lemma}\label{p-14}
For any $p\geq 1,\eta>0,$
\begin{eqnarray}
  \label{p-61} \mE\[\sup_{\phi \in H, \|\phi\|=1 }\cN_0(\phi)^p \] &\leq & C(\eta,k,p)\exp{(\eta \|U_0\|^2)},
  \\ \label{p-62}
   \mE\[\sup_{\phi \in H, \|\phi\|=1 }\cN_1(\phi)^p\] &\leq &  C(\eta,k,p)\exp{(\eta \|U_0\|^2)},
   \\  \label{p-63}  \mE\[\sup_{\phi \in H, \|\phi\|=1 } \cM_0(\phi)^p \] &\leq & C(\eta,k,p)\exp{(\eta \|U_0\|^2)},
  \\ \label{p-64}
   \mE\[\sup_{\phi \in H, \|\phi\|=1 } \cM_1(\phi)^p\] &\leq &  C(\eta,k,p)\exp{(\eta \|U_0\|^2)}.
\end{eqnarray}
\end{lemma}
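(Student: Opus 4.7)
My plan is to reduce each of the four bounds to the moment estimates collected in Lemmas \ref{p-29}, \ref{p-10}, and \ref{p-30}, exploiting two structural observations. First, each quantity inside the $\max$ defining $\cN_\alpha(\phi)$ has the form $\|\langle \cK_{t,T}\phi, V_t\rangle\|_{C^\alpha[T/2,T]}$ with either $V_t = Y_k^m(\bar{U}_t)$, which is affine in $\bar{U}_t$, or $V_t = [Y_k^m(U), \sigma_\ell^{m'}]$; a direct computation using $\nabla \sigma_\ell^{m'} \equiv 0$ and the affinity of $Y_k^m$ in its argument shows the latter equals $-\bigl(B(\sigma_k^m, \sigma_\ell^{m'}) + B(\sigma_\ell^{m'}, \sigma_k^m)\bigr)$, a fixed element of $H$ independent of $t$ and $\omega$ (consistent with \eqref{p-49}). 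Second, the linearity $\phi \mapsto \cK_{t,T}\phi$ together with Cauchy--Schwarz absorbs $\sup_{\|\phi\|=1}$ into operator-norm factors $\|\cK_{t,T}\|_{\cL(H,H)}$, whose moments follow from the uniformity in $\xi$ of the energy estimate \eqref{p-38} underlying Lemma \ref{p-29}. I will spell out $\cN_0$ and $\cN_1$; the arguments for $\cM_0, \cM_1$ are obtained by substituting $\cY, \psi$ for $Y, \sigma$.

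For the $C^0$ bound I combine the operator-norm factor with a pointwise estimate on $\|V_t\|$: in the commutator case this is a constant depending only on $k, \ell$, while in the affine case the decomposition $Y_k^m(\bar{U}_t) = A^{\alpha,\beta}\sigma_k^m + B(\sigma_k^m, \bar{U}_t) + B(\bar{U}_t, \sigma_k^m)$ and standard bilinear estimates on $B$ give $\|V_t\| \leq C_k\bigl(1 + \|\bar{U}_t\|_{H^1}\bigr)$. Moments of $\sup_{t \in [T/2,T]}\|\bar{U}_t\|_{H^1}$ are supplied by Lemma \ref{p-30}(4), and H\"older's inequality in $\omega$ applied to the product with $\|\cK_{t,T}\|_{\cL(H,H)}^p$ yields \eqref{p-61}; the same template delivers \eqref{p-63}.

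For the $C^1$ bound I differentiate in $t$ and pair in the natural dualities:
$$\bigl|\partial_t \langle \cK_{t,T}\phi, V_t\rangle\bigr| \leq \|\partial_t \cK_{t,T}\phi\|_{H^{-2\alpha;-2\beta}}\|V_t\|_{H^{2\alpha;2\beta}} + \|\cK_{t,T}\phi\|\cdot\|\partial_t V_t\|.$$
Lemma \ref{p-10} controls the first factor of the first term; the second factor is either an absolute constant (commutator case) or, in the affine case, dominated by a high-order Sobolev norm of $\bar{U}_t$ whose moments are furnished by Lemma \ref{p-30}(4). For $\partial_t V_t$ the commutator case contributes zero, while in the affine case the identity $\partial_t \bar{U} = F(U) = -A^{\alpha,\beta}U - B(U,U)$ gives $\partial_t V_t = B(\sigma_k^m, F(U_t)) + B(F(U_t), \sigma_k^m)$, whose $H$-norm is dominated by a polynomial in high Sobolev norms of $U_t$ to which Lemma \ref{p-30}(4) applies. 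Cauchy--Schwarz in $\omega$ then yields \eqref{p-62} and, by the same route, \eqref{p-64}.

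The main obstacle is precisely this $C^1$ estimate: because $\partial_t \cK_{t,T}\phi$ is only controlled in the weak negative-index space $H^{-2\alpha;-2\beta}$, both $V_t$ and $\partial_t V_t$ have to be measured in the relatively strong mixed Sobolev space $H^{2\alpha;2\beta}$, which in turn forces $F(U_t) = -A^{\alpha,\beta}U_t - B(U_t, U_t)$ into a correspondingly strong norm. It is exactly the standing hypothesis $\alpha, \beta > 1$ that keeps the nonlinear term $B(U,U)$ manageable at this level of regularity and closes the reduction to Lemma \ref{p-30}(4).
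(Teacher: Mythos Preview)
Your proposal is correct. For the $C^0$ bounds \eqref{p-61} and \eqref{p-63} your argument coincides with the paper's: Cauchy--Schwarz against $\|\cK_{t,T}\|_{\cL(H,H)}$ and a pointwise bound on $\|V_t\|$, then Lemmas \ref{p-29} and \ref{p-30}.

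For the $C^1$ bounds \eqref{p-62} and \eqref{p-64} you take a genuinely different route from the paper. The paper does \emph{not} split $\partial_t\langle\cK_{t,T}\phi,V_t\rangle$ into two pairings. Instead it combines the backward equation $\partial_t\cK_{t,T}\phi=-(\nabla F(U_t))^*\cK_{t,T}\phi$ with the chain rule $\partial_t V_t=\nabla V\cdot F(U_t)$ to obtain the Lie-bracket identity
\[
\partial_t\langle\cK_{t,T}\phi,Y_k^m(\bar U)\rangle=\langle\cK_{t,T}\phi,[Y_k^m(\bar U),F(U)]\rangle,
\qquad
\partial_t\langle\cK_{t,T}\phi,J_{k,\ell}^{m,m'}\rangle=\langle\cK_{t,T}\phi,[F(U),Z_{k,\ell}^{m,m'}]\rangle,
\]
which keeps the entire estimate inside the $H$-inner product; only $\|\cK_{t,T}\phi\|$ (Lemma \ref{p-29}) and the $H$-norm of the bracket (a polynomial in $\|U\|_{H^4}$, handled by Lemma \ref{p-30}(4)) are needed, and Lemma \ref{p-10} is never invoked. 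Your approach is more elementary in that it avoids recognising this algebraic cancellation, but the price is the $H^{-2\alpha;-2\beta}/H^{2\alpha;2\beta}$ duality and the weak-norm estimate of Lemma \ref{p-10}, together with a slightly stronger norm on $V_t$. Both routes reduce to the same high-regularity moment bounds on $U_t$; the paper's version is more in the spirit of the Lie-bracket calculus developed in Section 4 and reused in Lemmas \ref{p-16}--\ref{h-2}.
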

\begin{proof}
(\ref{p-61}) and (\ref{p-63}) follow directly from    Lemma \ref{p-30} and Lemma \ref{p-29}.

By the expressions  of $Y_k^m(\bar{U}), F(U),Z_{k,\ell}^{m,m'}$ and Lemma \ref{p-30}, there exist $C=C(k,p,\eta)$ and $q=q(p,k)$ such that
\begin{eqnarray}
\nonumber  &&   \mE \sup_{t\in [T/2,T]}\|
 [Y_k^m(\bar{U}), F(U)] \|^{2p} + \mE \sup_{t\in [T/2,T]}\|
[F(U), Z_{k,\ell}^{m,m'} ]  \|^{2p}
 \\ \nonumber  &&  \leq    C\mE \[1+\|U\|_{H^4}^q\]
 \\  \label{p-66}& & \leq  C \exp{(\eta \|U_0\|^2/2)}.
\end{eqnarray}

This along with Lemma  \ref{p-29} yields
\begin{eqnarray}
\nonumber   && \mE  \| \langle \cK_{t,T}\phi,Y_k^m(\bar{U})\rangle\|_{C^1}^p
\\ \nonumber   &&\leq  C \mE  \| \partial_t \langle \cK_{t,T}\phi,Y_k^m(\bar{U})\rangle\|^p
\\ \nonumber   &&\leq  C \mE   | \langle \cK_{t,T}\phi,[Y_k^m(\bar{U}),F(U)]\rangle |^p
 \\  \nonumber  && \leq C\|\phi\|^p \exp{(\eta \|U_0\|^2/2)}\left(\mE \sup_{t\in [T/2,T]}\|
 [Y_k^m(\bar{U}), F(U)] \|^{2p}\right)^{1/2}.
 \\  \label{p-67}  && \leq  C \exp{(\eta \|U_0\|^2 )}.
 \end{eqnarray}

Combining (\ref{p-66}) with Lemma  \ref{p-29}, one arrives that
\begin{eqnarray}
 \nonumber  && \mE\[  \| \langle   \cK_{t,T}\phi, [Y_k^m(U),\sigma_{\ell}^{m'}]   \rangle\|_{C^1}^p  \]
  \\  \nonumber &&=\mE\[  \| \langle   \cK_{t,T}\phi,Z_{k,\ell}^{m,m'}   \rangle\|_{C^1}^p  \]
  \\   \nonumber && \leq \mE\[  | \partial_t \langle   \cK_{t,T}\phi,Z_{k,\ell}^{m,m'}   \rangle |^p  \]
  \\   \nonumber &&
  \leq \mE\[  |  \langle   \cK_{t,T}\phi,[F(U), Z_{k,\ell}^{m,m'} ]  \rangle |^p  \]
  \\  \nonumber  && \leq C\|\phi\|^p \exp{(\eta \|U_0\|^2/2)}\left(\mE \sup_{t\in [T/2,T]}\|
[F(U), Z_{k,\ell}^{m,m'} ]  \|^{2p}\right)^{1/2}
 \\  \label{p-68} && \leq  C \exp{(\eta \|U_0\|^2 )}.
\end{eqnarray}
Immediately (\ref{p-62}) follows from  (\ref{p-67}) and (\ref{p-68}). The proof of
(\ref{p-64}) is similar to that  of   (\ref{p-62}).
\end{proof}

We finish this subsection with citing two technical tools.
\begin{lemma}\label{lemma fght}
(F${\ddot{o}}$ldes et al. \cite{FGRT}) Fix $T>0,~\alpha\in (0,1]$ and an index set $\mathcal{I}$. Consider a collection of random functions $g_\phi$ taking values in $C^{1,\alpha}([T/2,T])$ and indexed by $\phi\in\mathcal{I}$. Define, for each $\eps>0,$
$$\Lambda_{\eps,\alpha}:=\bigcup\limits_{\phi\in\mathcal{I}}\Lambda^\phi_{\eps,\alpha},\quad where~\Lambda^\phi_{\eps,\alpha}:=\left\{\sup\limits_{t\in[T/2,T]}|g_\phi(t)|\leq\eps~and~\sup\limits_{t\in[T/2,T]}|g'_\phi(t)|>\eps^\frac{\alpha}{2(1+\alpha)}\right\}.$$
Then, there is $\eps_0=\eps_0(\alpha,T)$ such that for each $\eps\in (0,\eps_0)$
$$\mathbb{P}(\Lambda_{\eps,\alpha})\leq C\eps\mathbb{E}\left(\sup\limits_{\phi\in\mathcal{I}}\|g_\phi\|^{2/\alpha}_{C^{1,\alpha}[T/2,T]}\right).$$
\end{lemma}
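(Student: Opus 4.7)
The plan is to reduce the claim to a deterministic pointwise lower bound on $\|g_\phi\|_{C^{1,\alpha}[T/2,T]}$ on the bad event $\Lambda^\phi_{\eps,\alpha}$, and then to apply Chebyshev's inequality at the moment $2/\alpha$. Concretely, I aim to prove that there is a constant $c=c(\alpha,T)>0$ and a threshold $\eps_0=\eps_0(\alpha,T)>0$ such that, for every $\eps\in(0,\eps_0)$ and every $\phi\in\mathcal{I}$,
\begin{equation*}
\Lambda^\phi_{\eps,\alpha}\subseteq\bigl\{\|g_\phi\|_{C^{1,\alpha}[T/2,T]}\geq c\,\eps^{-\alpha/2}\bigr\}.
\end{equation*}
Taking the union over $\phi$ would then give $\Lambda_{\eps,\alpha}\subseteq\{\sup_{\phi}\|g_\phi\|_{C^{1,\alpha}}\geq c\,\eps^{-\alpha/2}\}$, and Chebyshev's inequality with exponent $2/\alpha$ yields $\mP(\Lambda_{\eps,\alpha})\leq c^{-2/\alpha}\eps\cdot\mE[\sup_\phi\|g_\phi\|^{2/\alpha}_{C^{1,\alpha}}]$, which is exactly the claimed bound with $C=c^{-2/\alpha}$.

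For the pointwise step, I would fix $\omega\in\Lambda^\phi_{\eps,\alpha}$, set $A:=\eps^{\alpha/(2(1+\alpha))}$, and pick $t_0\in[T/2,T]$ with $|g'_\phi(t_0)|>A$. Writing $M:=\|g'_\phi\|_{C^\alpha[T/2,T]}$, H\"older continuity gives $|g'_\phi(t)-g'_\phi(t_0)|\leq M|t-t_0|^\alpha$, so for $|t-t_0|\leq\delta^*:=(A/(2M))^{1/\alpha}$ one has $|g'_\phi(t)|\geq A/2$ with the same sign as $g'_\phi(t_0)$ (the same-sign property follows because $g'_\phi(t_0)\pm A/2$ do not straddle $0$). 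Since $t_0\in[T/2,T]$, at least one of $[t_0,t_0+\delta^*]$ or $[t_0-\delta^*,t_0]$ lies in $[T/2,T]$ provided $\delta^*\leq T/4$; integrating $g'_\phi$ on that sub-interval and using $|g_\phi|\leq\eps$ forces $\delta^*A/2\leq 2\eps$, i.e.\ $\delta^*\leq 4\eps/A$. Combining with the definition of $\delta^*$ yields $M\geq A^{1+\alpha}/(2\cdot 4^\alpha\eps^\alpha)$, and because $A^{1+\alpha}=\eps^{\alpha/2}$ this is precisely the desired $M\geq c\,\eps^{-\alpha/2}$.

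The auxiliary constraint $\delta^*\leq T/4$ is the only subtlety. If it fails, then $M$ is very small and one instead repeats the H\"older argument on a fixed sub-interval of length $T/4$, which still yields $|g'_\phi|\geq|g'_\phi(t_0)|/2$ there and integrates to $|g'_\phi(t_0)|T/8\leq 2\eps$, i.e.\ $|g'_\phi(t_0)|\leq 16\eps/T$; this contradicts $|g'_\phi(t_0)|>\eps^{\alpha/(2(1+\alpha))}$ as soon as $\eps<\eps_0:=(T/16)^{2(1+\alpha)/(2+\alpha)}$. Below that threshold the case is vacuous and the bound $M\geq c\,\eps^{-\alpha/2}$ holds on the whole event. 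The main obstacle to get right is the sharp tuning of the exponent $\alpha/(2(1+\alpha))$ appearing in the definition of $\Lambda^\phi_{\eps,\alpha}$: it is the unique exponent that makes the three-way balance between $|g'_\phi(t_0)|$, $M$, and $\eps$ produce $M\geq c\eps^{-\alpha/2}$, which after Chebyshev with moment $2/\alpha$ yields exactly the factor $\eps$ (and no other power) on the right-hand side of the conclusion.
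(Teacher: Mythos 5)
The paper does not prove this lemma at all: it is quoted verbatim from F\"oldes et al.\ \cite{FGRT} (their interpolation-type lemma), so there is no in-paper proof to compare against. Your argument is correct and is essentially the standard proof from that reference: the pointwise inclusion $\Lambda^\phi_{\eps,\alpha}\subseteq\{\|g_\phi\|_{C^{1,\alpha}}\geq c\,\eps^{-\alpha/2}\}$ is established exactly by the three-way balance you describe (the sign-preservation of $g'_\phi$ on an interval of length $\delta^*=(A/2M)^{1/\alpha}$, the integration bound $\delta^*A/2\leq 2\eps$, and the identity $A^{1+\alpha}=\eps^{\alpha/2}$), the edge case $\delta^*>T/4$ is correctly dispatched by choosing $\eps_0=(T/16)^{2(1+\alpha)/(2+\alpha)}$, and Chebyshev at moment $2/\alpha$ then yields the stated bound with $C=c^{-2/\alpha}$.
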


Given any multi-index $\alpha := (\alpha_1, \cdots , \alpha_d) \in N^d $,  recall the standard notation $W^\alpha :=
W_1^{\alpha_1}
\cdots W_d^{\alpha_d}. $
\begin{thm}\label{theorem 6.4}
(Hairer-Mattingly \cite{Hairer}) Fix $M,T>0$. Consider the collection $\mathfrak{B}_M$ of $M$th degree of 'Wiener polynomials' of the form
$$F=A_0+\sum\limits_{|\alpha|\leq M}A_\alpha W^\alpha,$$
where for each multi-index $\alpha$, with $|\alpha|\leq M,$ $A_\alpha:\Omega\times [0,T]\rightarrow\mathbb{R}$ is an arbitrary stochastic process. Then for all $\eps\in(0,1)$ and $\beta>0$, there exists a measurable set $\Omega_{\eps,M,\beta}$ with $\mathbb{P}(\Omega^c_{\eps,M,\beta})\leq C\eps,$ such that on $\Omega_{\eps,M,\beta}$ and for every $F\in\mathfrak{B}_M$
$$\sup\limits_{t\in[0,T]}|F(t)|<\eps^\beta\Rightarrow
\begin{cases}
either & \sup\limits_{\alpha\leq M}\sup\limits_{t\in[0,T]}|A_\alpha(t)|\leq \eps^{\beta3^{-M}},\\
or & \sup\limits_{\alpha\leq M}\sup\limits_{s\neq t\in[0,T]}\frac{A_\alpha(t)-A_\alpha(s)}{t-s}\geq \eps^{-\beta3^{-(M+1)}}.
\end{cases}
$$
\end{thm}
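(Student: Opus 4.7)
The plan is to proceed by strong induction on the polynomial degree $M$, isolating a quantitative small-ball estimate for iterated Wiener integrals as the analytic engine and using Lemma \ref{lemma fght} to convert smallness of $F$ into smallness of its coefficients with a controlled loss per inductive step. The base case $M=0$ is immediate: $F=A_0$ and $\sup_t|F(t)|<\eps^\beta$ gives $\sup_t|A_0(t)|<\eps^\beta=\eps^{\beta 3^{0}}$, so the first alternative holds on all of $\Omega$.

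For the inductive step, fix $M\geq 1$ and assume the statement for degree $M-1$ with a good event $\Omega_{\eps,M-1,\tilde\beta}$ of measure at least $1-C\eps$. Given $F\in\mathfrak{B}_M$, suppose the second alternative fails, so that every $A_\alpha$ satisfies a uniform Hölder bound at the level $\eps^{-\beta 3^{-(M+1)}}$. The strategy is to freeze coefficients on a small time window $[t_0,t_0+h]$ of length $h$ chosen as a small power of $\eps$: writing $F(t)-F(t_0)=\sum_\alpha A_\alpha(t_0)\bigl[W^\alpha(t)-W^\alpha(t_0)\bigr]+R(t)$ where $\|R\|_\infty\lesssim\eps^{-\beta 3^{-(M+1)}}h$, the hypothesis $\|F\|_\infty<\eps^\beta$ delivers
\begin{equation*}
\sup_{t\in[t_0,t_0+h]}\Bigl|\sum_{\alpha}A_\alpha(t_0)\bigl[W^\alpha(t)-W^\alpha(t_0)\bigr]\Bigr|\;\leq\;2\eps^\beta+C\eps^{-\beta 3^{-(M+1)}}h.
\end{equation*}
The left-hand side is a Wiener polynomial of degree $M$ in the shifted Brownian motion $\widetilde W(s):=W(t_0+s)-W(t_0)$, $s\in[0,h]$, with deterministic coefficients $A_\alpha(t_0)$. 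A small-ball inequality for such polynomials (which is where the self-similarity and hypoelliptic nondegeneracy of the Brownian signature enter) furnishes, on an event of measure $\geq 1-C\eps$, the reverse bound
\begin{equation*}
\sup_{s\in[0,h]}\Bigl|\sum_\alpha c_\alpha \widetilde W^\alpha(s)\Bigr|\;\geq\; h^{M/2}\bigl(\max_\alpha|c_\alpha|\bigr)\cdot\eps^{\kappa}
\end{equation*}
for a suitable $\kappa>0$. Feeding back, each $|A_\alpha(t_0)|$ is bounded by a power of $\eps$ that, after balancing $h$ against $\eps^\beta$, works out to $\eps^{\beta 3^{-M}}$.

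To pass from a fixed $t_0$ to a supremum in $t_0$, I would invoke the Hölder regularity of the $A_\alpha$ (guaranteed by the failed second alternative) together with Lemma \ref{lemma fght}, the latter applied to each scalar process $t\mapsto A_\alpha(t)$: the dichotomy between pointwise smallness and largeness of the derivative produces the exponent $\alpha/(2(1+\alpha))$, which, taken as $\alpha\to 1$, is the combinatorial source of the characteristic factor $3$ at every inductive step. The good set $\Omega_{\eps,M,\beta}$ is then the intersection of $\Omega_{\eps,M-1,\tilde\beta}$, the Brownian small-ball event, and the Hölder dichotomy event from Lemma \ref{lemma fght}, each of whose complements has measure $\leq C\eps$; a union bound keeps the total probability budget within $C\eps$ after adjusting the constant.

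The principal obstacle is the quantitative small-ball estimate for linear combinations of iterated Wiener integrals together with the careful propagation of exponents: one must ensure that the exponent lost at each inductive stage is exactly controlled by a factor $3$, which forces a precise choice of $h$ as a specific power of $\eps$ at each level, and requires the small-ball constants and Hölder exponents in Lemma \ref{lemma fght} to combine cleanly. A secondary technical difficulty is handling the implicit dependence of $A_\alpha$ on $\omega$, which prevents a naive deterministic rearrangement argument; this is why the good set has to be constructed uniformly in $F\in\mathfrak{B}_M$ before the deterministic bookkeeping on coefficients can begin.
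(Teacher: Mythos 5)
This statement is not proved in the paper at all: it is quoted verbatim as a ``technical tool'' from Hairer--Mattingly \cite{Hairer} (their theorem on Wiener polynomials), so there is no in-paper argument to compare yours against. Judged on its own terms, your sketch reproduces the correct global architecture (induction on the degree $M$, with a Norris-type dichotomy supplying the loss of exponent at each level), but the central step is not established and, as written, would fail.

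The difficulty is in the localization step. First, after re-centering at $t_0$ the quantity $\sum_\alpha A_\alpha(t_0)\bigl[W^\alpha(t)-W^\alpha(t_0)\bigr]$ is \emph{not} a Wiener polynomial in $\widetilde W$ with coefficients $A_\alpha(t_0)$: expanding $W_i(t)=W_i(t_0)+\widetilde W_i(t-t_0)$ binomially produces coefficients of the form $\sum_{\alpha\geq\gamma} A_\alpha(t_0)\binom{\alpha}{\gamma}W^{\alpha-\gamma}(t_0)$, which are random and must be disentangled before one can conclude anything about $\max_\alpha|A_\alpha(t_0)|$. Second, and more seriously, the theorem allows the $A_\alpha$ to be \emph{arbitrary} stochastic processes --- in particular not adapted --- so the coefficients cannot be treated as independent of the increments of $\widetilde W$ on $[t_0,t_0+h]$. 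Your claimed small-ball lower bound $\sup_s|\sum_\alpha c_\alpha\widetilde W^\alpha(s)|\geq h^{M/2}(\max_\alpha|c_\alpha|)\eps^\kappa$, holding on a \emph{single} event of measure $1-C\eps$ \emph{uniformly} over all random coefficients, is essentially equivalent to the theorem itself; establishing such a uniform quantitative non-degeneracy of the Brownian signature is precisely the content of Hairer--Mattingly's recursive argument, so the proposal assumes the hard part. Finally, the exponent bookkeeping does not close: the exponent $\alpha/(2(1+\alpha))$ from Lemma \ref{lemma fght} tends to $1/4$, not $1/3$, as $\alpha\to 1$, whereas the factor $3$ per inductive level in the statement comes from the specific structure of the Norris-type step in \cite{Hairer} ($\eps^{3\delta}$-smallness of the polynomial forcing $\eps^{\delta}$-smallness of the coefficients), which your sketch does not derive.
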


\subsection{Quadratic forms:upper bounds}\label{p-8}

The purpose of this subsection is to give a proof of the following proposition.
\begin{proposition}\label{h-5}
Fix $T>0,$ for  any $ N\geq 1,\alpha\in (0,1]$ and $\eta>0,$  there are positive constant $q_1=q_1(\alpha,N,T,\eta),q_2=q_2(\alpha,N,T,\eta)$ such that the following holds.
There exists a positive constant $\eps^*=\eps^*(\alpha,N,T,\eta)>0,$ such that, for any   $\eps\in (0,\eps^*]$, there exists a measurable set $\Omega_\eps^{*}=\Omega^{*}_\eps(\alpha,N,T,\eta)\subseteq \Omega$  and positive  constants  $C_1=C_1(\alpha,N,T,\eta),C_2=C_2(\alpha,N,T,\eta)$ such that
  \begin{eqnarray*}
    \mP((\Omega_\eps^*)^c )\leq C_1\eps ^{q_1}\exp{(\eta \|U_0\|^2)},
  \end{eqnarray*}
  and on the set  $\Omega_\eps^{*}$ one has,
  \begin{eqnarray*}
\langle \cM_{0,T}\phi,\phi\rangle \leq \eps \|\phi\|^2 ~\Rightarrow~ \langle Q_N(U)\phi,\phi\rangle \leq   C_2 \eps^{q_2}\|\phi\|^2
  \end{eqnarray*}
  which is valid for any  $\phi \in\cS_{\alpha,N}.$
\end{proposition}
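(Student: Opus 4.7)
The strategy is an induction on the generation index in Hypothesis~\ref{p-21}: starting from the driven modes $\cZ_0$, we propagate the smallness of $\langle \cM_{0,T}\phi,\phi\rangle$ through successive Lie bracket computations so as to obtain, uniformly in $t\in[T/2,T]$, smallness of $\langle \cK_{t,T}\phi,\sigma_k^m\rangle$ for $k\in \cup_{j\leq n}\cZ_{2j}$ and of $\langle \cK_{t,T}\phi,\psi_k^m\rangle$ for $k\in \cup_{j\leq n}\cZ_{2j+1}$. Evaluating at $t=T$, where $\cK_{T,T}\phi=\phi$, and summing finitely many terms then yields the desired control of $\langle Q_N\phi,\phi\rangle$.

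\emph{Base case.} Since $\langle \cM_{0,T}\phi,\phi\rangle=\int_0^T \|\mathcal{Q}_b^*\cK_{r,T}\phi\|^2\dif r$, the assumption $\langle \cM_{0,T}\phi,\phi\rangle\leq\eps\|\phi\|^2$ gives $\int_{T/2}^T|\langle \cK_{r,T}\phi,\sigma_k^m\rangle|^2\dif r\leq C\eps\|\phi\|^2$ for each $k\in\cZ_0$, $m\in\{0,1\}$. Since $\sigma_k^m$ is smooth and Lemma~\ref{p-10} controls $\|\partial_t\cK_{t,T}\phi\|_{H^{-2\alpha;-2\beta}}$, a Chebyshev-type truncation renders $t\mapsto \langle \cK_{t,T}\phi,\sigma_k^m\rangle$ uniformly H\"older on $[T/2,T]$ outside a set of small probability; a standard $L^2$-to-sup interpolation then upgrades the integral bound to $\sup_{t\in[T/2,T]}|\langle \cK_{t,T}\phi,\sigma_k^m\rangle|\leq \eps^{q_0}\|\phi\|$ on an event whose complement has probability $\lesssim \eps^{q_0}\exp(\eta\|U_0\|^2)$.

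\emph{Inductive step.} Assume at stage $n$ that $\sup_{t\in[T/2,T]}|\langle \cK_{t,T}\phi,\sigma_k^m\rangle|\leq \eps^{q_n}\|\phi\|$ for all $k\in\cZ_{2n}$. Differentiating via the backward equation \eqref{p-3} gives $\partial_t\langle \cK_{t,T}\phi,\sigma_k^m\rangle=\langle \cK_{t,T}\phi,Y_k^m(U_t)\rangle$, and Lemma~\ref{lemma fght}---with the required $C^{1,\alpha}$-norm controlled by Lemma~\ref{p-14}---produces $\sup_t|\langle \cK_{t,T}\phi,Y_k^m(U_t)\rangle|\leq \eps^{q_n'}\|\phi\|$ outside a further small set. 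Exploiting the decomposition $Y_k^m(U_t)=Y_k^m(\bar{U}_t)+\sum_{\ell\in Z_0,m'}\alpha_\ell^{m'}J_{k,\ell}^{m,m'}\,W_t^{\ell,m'}$, the quantity $\langle \cK_{t,T}\phi,Y_k^m(U_t)\rangle$ is a degree-one Wiener polynomial in $W$; Theorem~\ref{theorem 6.4} together with Lemma~\ref{p-14} (used again to rule out the high-derivative alternative) then forces smallness of each coefficient, and in particular of $\langle \cK_{t,T}\phi,J_{k,\ell}^{m,m'}\rangle$ for every $\ell\in\cZ_0$. Lemma~\ref{p-18} shows that appropriate linear combinations of these brackets are nonzero multiples of $\psi_{k\pm\ell}^m$ precisely when $\langle k,\ell^\perp\rangle\neq 0$ and $|k|\neq|\ell|$, so by the definition \eqref{p-22} of $\cZ_{2n+1}$ we obtain smallness of $\langle \cK_{t,T}\phi,\psi_{k'}^m\rangle$ for all $k'\in\cZ_{2n+1}$. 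The symmetric passage from $\cZ_{2n+1}$ to $\cZ_{2n+2}$ is identical after the substitutions $(\sigma,Y,J)\rightsquigarrow(\psi,\cY,Z)$ and Lemma~\ref{p-18}$\rightsquigarrow$Lemma~\ref{h-3}, following the schematic in Figure~\ref{t2}.

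\emph{Conclusion and main obstacle.} Iterating the previous step $2N+1$ times and specializing to $t=T$ gives $|\langle\phi,\sigma_k^m\rangle|^2\leq \eps^{q_2}\|\phi\|^2$ and $|\langle\phi,\psi_k^m\rangle|^2\leq \eps^{q_2}\|\phi\|^2$ for all indices entering $Q_N$, whence the claimed inequality follows by summing. The accumulated exceptional set $(\Omega_\eps^*)^c$ has probability $\leq C_1\eps^{q_1}\exp(\eta\|U_0\|^2)$ by collecting the bounds coming from Lemmas~\ref{p-30}, \ref{p-29}, \ref{p-10}, \ref{p-14}, \ref{lemma fght}, and Theorem~\ref{theorem 6.4}. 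The principal technical difficulty is the quantitative bookkeeping of the exponents of $\eps$: each invocation of Lemma~\ref{lemma fght} degrades the exponent by a factor of order $\alpha/(2(1+\alpha))$ and each invocation of Theorem~\ref{theorem 6.4} by a factor $3^{-M}$; one must verify that after these finitely many iterations the resulting $q_2>0$ is still strictly positive and depends quantitatively only on $(\alpha,N,T,\eta)$, which in turn fixes $\eps^*$ and delineates the smallness regime in which the conclusion is meaningful.
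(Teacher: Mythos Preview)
Your proposal is correct and follows essentially the same approach as the paper: the paper packages the base case as Lemma~\ref{h-10}, the passage $\sigma\to Y\to J\to\psi$ as Lemmas~\ref{p-16}, \ref{p-17}, \ref{p-24} (bundled in Lemma~\ref{h-11}), and the symmetric passage $\psi\to\cY\to Z\to\sigma$ as Lemmas~\ref{h-1}, \ref{h-2}, \ref{p-25} (bundled in Lemma~\ref{h-12}), then intersects the resulting good sets $\Omega_\eps^*=\Omega_{\eps,\cM}\cap\bigcap_{n=0}^{2N+1}\Omega_{\eps,n}$ exactly as you describe. The only cosmetic difference is that for the base case the paper applies Lemma~\ref{lemma fght} to the antiderivative $g_\phi(t)=\sum(\alpha_\ell^{m'})^2\int_0^t\langle\sigma_\ell^{m'},\cK_{r,T}\phi\rangle^2\dif r$ (so $g_\phi'$ is the square and the Lipschitz bound comes from $g_\phi''$), rather than your $L^2$-to-sup interpolation via Lemma~\ref{p-10}; both routes give the same $\eps^{1/8}$-type smallness.
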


Roughly speaking, this theorem suggests that the quadratic forms $Q_N$ are bound to have small eigenvalues on $\cS_{\alpha,N}$ with large probability once the Malliavin matrix $\mathcal{M}_{0,T}$ possesses a small eigenvalue.

Motivated by Section 4, we will adopt an iterative and inductive strategy to prove Theorem \ref{p-8}. To make this more precise, notice that
\begin{eqnarray*}
\langle \cM_{0,T}\phi,\phi\rangle=\sum_{\ell \in Z_0,m' \in\{0,1\}}(\alpha_\ell^{m'})^2\int_0^T \langle \sigma_\ell^{m'},\cK_{r,T}\phi\rangle^2\dif r.
\end{eqnarray*}
Therefore we start from that $\langle \cM_{0,T}\phi,\phi\rangle$ is small to deduce that $\langle \sigma_\ell^{m'},\cK_{r,T}\phi\rangle$ are small, which is the content of Lemma 5.3. Then by Lie brackets computation as suggested by Figure 4.1, we estimate progressively that $\langle Y_k^m(U),\cK_{r,T}\phi\rangle$, $\langle [Y_k^m(U),\sigma^{m'}_\ell],\cK_{r,T}\phi\rangle$ and $\langle \psi^m_{k+\ell},\cK_{r,T}\phi\rangle$ are all small, which are the contents of Lemma 5.4, Lemma 5.5 and Lemma 5.6 respectively. We also need to integrate all these results, since they only hold on different large sets, which is the content of Lemma 5.10.
Likewise, in the other direction we start from $\langle \psi_k^{m},\cK_{r,T}\phi\rangle$ are small to estimate progressively that $\langle \cY_k^m(U),\cK_{r,T}\phi\rangle$, $\langle [\cY_k^m(U),\sigma^{m'}_\ell],\cK_{r,T}\phi\rangle$ and $\langle \sigma^m_{k+\ell},\cK_{r,T}\phi\rangle$ are all small on some large sets, which are the contents of Lemma 5.7, Lemma 5.8 and Lemma 5.9 respectively.
Lemma 5.11 serves to integrate all these results.
The whole process is iterative and inductive so that we can tackle with successively larger finite dimensional subspace. To be specific, we refer the readers to Figure \ref{t1} for an illustration of the arguing structure in this subsection  that lead to the proof of Proposition \ref{h-5}.

\makeatletter

\def\@captype{figure}
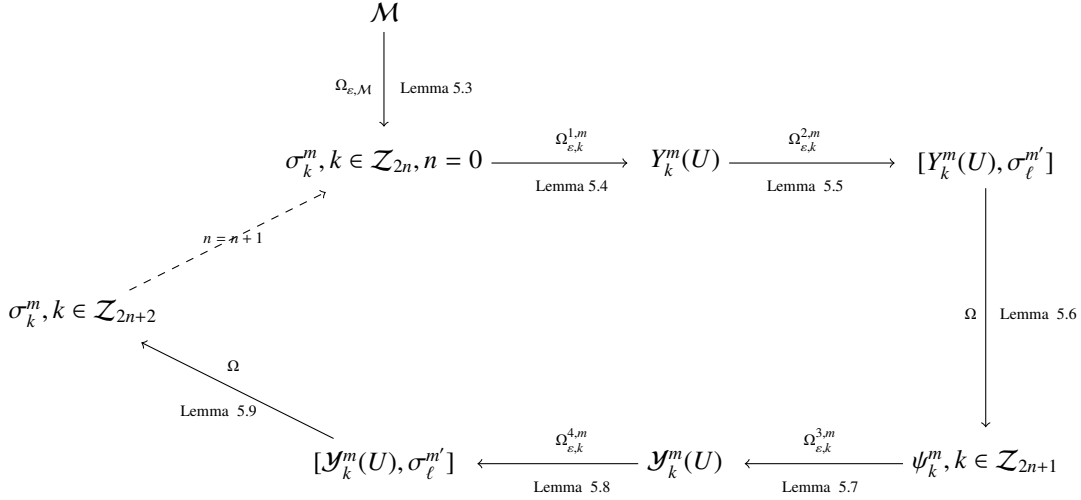
\begin{figure}[h]
\begin{tikzpicture}[shorten >=5pt,node distance=8cm]
\node(fs) at (4,6)   {$\cM$};
\node(ff) at (4,4) {$\sigma_{k}^{m}, k\in \cZ_{2n},n=0$};
\node(ef) at (8,4) { $Y_k^m(U)$};
\node(tf) at (12,4)  { $[Y_k^m(U),\sigma_{\ell}^{m'}]$ };
\node(fz) at (4,0) {$[\cY_k^m(U),\sigma_{\ell}^{m'}]$};
\node(ez) at (8,0) {$\cY_k^m(U)$};
\node(tz) at (12,0) {  $\psi_{k}^m,k\in \cZ_{2n+1}$};
\node(zt) at (0,2) {$\sigma_{k}^m,k\in\cZ_{2n+2}$};
 \path[line,->](fs) -- (ff);
 \node at (3.6,5){\tiny  $\Omega_{\eps,\cM}$};
 \node at (4.7,5){\tiny  Lemma \ref{h-10} };
 \path[line,->](ff) -- (ef);
  \node at (6.5,4.3){\tiny $\Omega_{\eps,k}^{1,m}$ };
 \node at (6.5,3.7){\tiny \text {Lemma} \ref{p-16} };
  \path[line,->](ef) -- (tf);
  \node at (9.6,4.3){\tiny $\Omega_{\eps,k}^{2,m}$ };
    \node at (9.6,3.7){\tiny \text{Lemma } \ref{p-17} };
  \path[line,->](tf) -- (tz);
    \node at (11.8,2){\tiny $\Omega $ };
    \node at (12.7,2){\tiny \text{Lemma } \ref{p-24} };
      \path[line,->](tz) -- (ez);
      \node at (9.8,0.3){\tiny $\Omega_{\eps,k}^{3,m} $ };
    \node at (9.8,-0.3){\tiny \text{Lemma } \ref{h-1} };
  \path[line,->](ez) -- (fz);
   \path[line,->](fz) -- (zt);
      \node at (2,1.3){\tiny $\Omega $ };
    \node at (1.8,0.7){\tiny \text{Lemma } \ref{p-25} };
    \draw[dashed, ->](zt) --node{} (ff);
       \node at (2,3){\tiny $n=n+1$ };

   \node at (6.5,0.3){\tiny $\Omega_{\eps,k}^{4,m} $ };
    \node at (6.5,-0.3){\tiny \text{Lemma } \ref{h-2} };
\end{tikzpicture}
\caption{\footnotesize  An illustration of the structure of the lemmas that leads to the proof of Proposition \ref{h-5}.    The  solid arrows  indicate that if one term is "small" then the other one "small" on a set of large measure(displayed up  or left of the arrow), where the meaning of "smallness" is made precise in each lemma. The  dashed  arrows   shows that the process is iterative.
In this figure, $m,m'\in \{0,1\}, \ell\in \cZ_0.$ One may notice the close relationship between Figure 4.1 and Figure 5.1.}
\label{t1}
\end{figure}

\newpage
\begin{lemma}\label{h-10}
For any $0<\eps<\eps_0(T)$ and every $\eta>0,$ there exists a set $\Omega_{\eps,\cM} $
and $C=C(\eta,T)$ with
\begin{eqnarray*}
  \mP(\Omega_{\eps,\cM} ^c)\leq C\exp\{\eta \|U_0\|^2\}\eps
\end{eqnarray*}
such that on the set $\Omega_{\eps,\cM}  $
  \begin{eqnarray}\label{p-52}
  \begin{split}
 \langle \cM_{0,T}\phi,\phi\rangle \leq \eps \|\phi\|^2 \Rightarrow~ \sup_{t\in [T/2,T]}| \langle \cK_{t,T}\phi,\sigma_{\ell}^{m'}\rangle| \leq   \eps^{1/8}\|\phi\|^2
\end{split}
  \end{eqnarray}
for each $\ell \in \cZ_0, m' \in \{0,1\}$ and $\phi\in H.$
\end{lemma}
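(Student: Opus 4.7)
The plan is to convert the Malliavin hypothesis into an $L^2$-in-time smallness of the scalar processes $g_{\ell,m'}(t):=\langle\sigma_\ell^{m'},\cK_{t,T}\phi\rangle$, then upgrade this to the uniform-in-time bound via the classical interpolation between $L^2$ and $C^1$, with the time derivative controlled in moments by Lemma \ref{p-10}. Since $|\cZ_0|<\infty$ and every ingredient is linear in $\phi$, a single event will serve every $\phi\in H$ simultaneously after a finite union bound over $(\ell,m')\in\cZ_0\times\{0,1\}$.

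For the $L^2$ piece, the identities $\cM_{0,T}=\cA_{0,T}\cA_{0,T}^*$ and $(\cA_{0,T}^*\phi)(r)=\cQ_b^*\cK_{r,T}\phi$ yield
\begin{equation*}
\langle\cM_{0,T}\phi,\phi\rangle=\sum_{\ell\in Z_0,\,m'\in\{0,1\}}(\alpha_\ell^{m'})^2\int_0^T g_{\ell,m'}(r)^2\,\dif r,
\end{equation*}
so the hypothesis gives $\|g_{\ell,m'}\|_{L^2(T/2,T)}^2\leq C\eps\|\phi\|^2$, with $C$ depending only on $\min_{\ell,m'}|\alpha_\ell^{m'}|$. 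For the $C^1$ piece, the backward equation \eqref{p-3} gives $g'_{\ell,m'}(t)=\langle\sigma_\ell^{m'},\partial_t\cK_{t,T}\phi\rangle$; since $\sigma_\ell^{m'}\in H^{2\alpha;2\beta}$ is a smooth single-mode vector, the pointwise bound used in the proof of Lemma \ref{p-10} is linear in $\phi$ and delivers the operator-norm estimate
\begin{equation*}
\mE\sup_{t\in[T/2,T]}\|\partial_t\cK_{t,T}\|_{\cL(H,H^{-2\alpha;-2\beta})}^2\leq C(\eta,T)\exp(\eta\|U_0\|^2).
\end{equation*}
Defining $\Omega_{\eps,\cM}$ to be the event $\{\sup_t\|\partial_t\cK_{t,T}\|_{\cL(H,H^{-2\alpha;-2\beta})}\leq\eps^{-1/2}\}$, Markov's inequality gives $\mP(\Omega_{\eps,\cM}^c)\leq C\exp(\eta\|U_0\|^2)\eps$.

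On $\Omega_{\eps,\cM}$ I would apply the elementary interpolation: if $g\in C^1([T/2,T])$ attains its supremum $G$ at $t_0$ and $h:=G/(2\|g'\|_\infty)\leq T/4$, then $|g|\geq G/2$ on an interval of length $h$ around $t_0$, hence $G^3\leq 8\|g'\|_\infty\|g\|_{L^2}^2$. Combined with the two bounds above,
\begin{equation*}
\sup_{t\in[T/2,T]}|g_{\ell,m'}(t)|^3\leq C\,\eps^{-1/2}\|\phi\|\cdot\eps\|\phi\|^2=C\eps^{1/2}\|\phi\|^3,
\end{equation*}
and therefore $\sup|g_{\ell,m'}|\leq C\eps^{1/6}\|\phi\|$, which majorizes the target $\eps^{1/8}\|\phi\|$ once $\eps^*$ is shrunk accordingly. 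The only side issue is verifying the window condition $h\leq T/4$, which amounts to $\eps^{1/6+1/2}\lesssim T$ and forces an upper bound $\eps^*=\eps^*(T)$; this is the only reason the conclusion is restricted to $\eps$ small, and it is the main (minor) obstacle, since the rest of the argument is routine interpolation plus a Markov-type estimate.
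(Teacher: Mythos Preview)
Your argument is correct and rests on the same idea as the paper's proof: interpolate between the $L^2$-in-time smallness coming from $\langle\cM_{0,T}\phi,\phi\rangle$ and a derivative bound supplied by Lemma~\ref{p-10}. The packaging differs, however. The paper applies the abstract interpolation Lemma~\ref{lemma fght} (with $\alpha=1$) to the \emph{antiderivative}
\[
g_\phi(t)=\sum_{\ell\in Z_0,\,m'}(\alpha_\ell^{m'})^2\int_0^t\langle\sigma_\ell^{m'},\cK_{r,T}\phi\rangle^2\,\dif r,
\]
defines $\Omega_{\eps,\cM}$ through the ``either/or'' alternative of that lemma, and reads off $\sup_t|g_\phi'(t)|\le\eps^{1/4}$, hence the exponent $1/8$ after a square root. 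You instead work directly with each $g_{\ell,m'}$, fix the exceptional set by Markov's inequality on the operator norm $\sup_t\|\partial_t\cK_{t,T}\|_{\cL(H,H^{-2\alpha;-2\beta})}$, and use the elementary inequality $\|g\|_\infty^3\le 8\|g'\|_\infty\|g\|_{L^2}^2$ to land at $\eps^{1/6}$. Your route is a bit more hands-on and yields a slightly sharper exponent; the paper's route has the advantage of fitting the exact template reused in Lemmas~\ref{p-16}--\ref{h-2}.

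One small point to clean up: your verification of the window condition $h\le T/4$ is circular as written, since you use the not-yet-proved bound $G\le C\eps^{1/6}\|\phi\|$ to estimate $h$. The fix is immediate: when $h>T/4$ the interval $[T/2,T]$ itself serves as the plateau and gives $G^2\le (16/T)\|g\|_{L^2}^2\le C\eps\|\phi\|^2$, which is even stronger; combining the two cases yields $G\le C(\eps^{1/6}+T^{-1/2}\eps^{1/2})\|\phi\|$ unconditionally. Also note that the operator-norm moment you invoke is not the literal statement of Lemma~\ref{p-10} but follows, as you say, from its proof (the pathwise bound $\|\partial_t\cK_{t,T}\xi\|_{H^{-2\alpha;-2\beta}}\le(1+\|U_t\|_{H^1})\|\cK_{t,T}\xi\|$ together with the pathwise Gronwall estimate~\eqref{p-38} for $\|J_{s,t}\|_{\cL(H,H)}=\|\cK_{s,t}\|_{\cL(H,H)}$).
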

\begin{proof}
Notice that
\begin{eqnarray*}
\langle \cM_{0,T}\phi,\phi\rangle=\sum_{\ell \in Z_0,m' \in\{0,1\}}(\alpha_\ell^{m'})^2\int_0^T \langle \sigma_\ell^{m'},\cK_{r,T}\phi\rangle^2\dif r.
\end{eqnarray*}
Define the function $  g_{\phi}(\cdot):[T/2,T]\rightarrow \mR^{+}$ as
\begin{eqnarray*}
  g_{\phi}(t):=\sum_{\ell \in  Z_0,m'\in\{0,1\}}(\alpha_\ell^{m'})^2\int_0^t \langle \sigma_\ell^{m'},\cK_{r,T}\phi\rangle^2\dif r,
\end{eqnarray*}
then
\begin{eqnarray*}
  g_{\phi}'(t)& =& \sum_{\ell \in  Z_0,m'\in \{0,1\}}(\alpha_\ell^{m'})^2\langle \sigma_\ell^{m'},\cK_{t,T}\phi\rangle^2,
\\   g_{\phi}^{''}(t)&=&2\sum_{\ell \in  Z_0, m'\in \{0,1\}}(\alpha_\ell ^{m'})^2\langle \sigma_\ell^{m'},\cK_{t,T}\phi\rangle \langle \sigma_\ell^{m'},\partial_t \cK_{t,T}\phi\rangle.
\end{eqnarray*}
Let
\begin{eqnarray*}
\Omega_{\eps,\cM}&=& \bigcap_{\phi\in H,\|\phi\|=1} \Big\{\sup_{t\in [T/2,T]} |g_{\phi}(t)|\geq \eps \text{  or } \sup_{t\in [T/2,T]} |g_{\phi}'(t)|\leq \eps^{1/4} \Big\}.
\end{eqnarray*}
Noticing  the definition of $\cZ_0$   and
\begin{eqnarray*}
 \langle \cK_{t,T}\phi,\sigma_\ell^{m'}\rangle= \langle \cK_{t,T}\phi,\sigma_{-\ell}^{m'}\rangle,
\end{eqnarray*}
then on $\Omega_{\eps,\cM},$ (\ref{p-52}) holds.
By Lemma \ref{lemma fght},  Lemma \ref{p-10} and (\ref{p-12}), we have
\begin{eqnarray*}
 \mP\left(\Omega_{\eps,\cM}^c\right)&\leq & \mP\left(\bigcup_{\phi\in H,\|\phi\|=1} \Big\{\sup_{t\in [T/2,T]} |g_{\phi}(t)|\leq \eps \text{  and } \sup_{t\in [T/2,T]} |g_{\phi}'(t)|\geq \eps^{1/4} \Big\},\right)
 \\ &\leq & C\eps \sum_{\ell \in \cZ_0,m'\in \{0,1\}}(\alpha_\ell^{m'})^4  \mE\[\sup_{\phi\in H, \|\phi\|=1}\sup_{t\in [T/2,T]}|\langle \sigma_\ell^{m'},\cK_{t,T}\phi\rangle \langle \sigma_\ell^{m'},\partial_t \cK_{t,T}\phi\rangle |^2\]
 \\ &\leq &C\eps\exp\{\eta \|U_0\|^2\}.
\end{eqnarray*}
\end{proof}

\begin{lemma}\label{p-16}
  Fix a certain $k\in \mZ^2, m\in \{0,1\}.$ For any $0<\eps<\eps_0(T)$ and $\eta>0,$ there  exists a set $\Omega_{\eps,k}^{1,m}$ and $C=C(k,\eta,T)$ with
  \begin{eqnarray*}
   \mP((\Omega_{\eps,k}^{1,m}) ^c)\leq C\exp\{\eta \|U_0\|^2\}\eps,
  \end{eqnarray*}
  such that on the set  $\Omega_{\eps,k}^{1,m},$   it holds that
  \begin{eqnarray}\label{p-4}
    \sup_{t\in [T/2,T]} |\langle \cK_{t,T}\phi,\sigma_k^m\rangle|\leq \eps \|\phi\|\Rightarrow  \sup_{t\in [T/2,T]} |\langle \cK_{t,T}\phi,Y_k^m(U)\rangle|\leq \eps^{1/10}\|\phi\|.
  \end{eqnarray}
\end{lemma}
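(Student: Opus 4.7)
The plan is to observe that by the backward equation \eqref{p-3} for $\cK$ and the fact that $\sigma_k^m$ is a constant (state-independent) vector field in $U$, we have
$$\partial_t \langle \cK_{t,T}\phi, \sigma_k^m\rangle = -\langle \cK_{t,T}\phi, \nabla F(U_t)\sigma_k^m\rangle = \langle \cK_{t,T}\phi, Y_k^m(U_t)\rangle,$$
since $Y_k^m(U)=[F(U),\sigma_k^m]=-\nabla F(U)\sigma_k^m$. Writing $g_\phi(t):=\langle \cK_{t,T}\phi,\sigma_k^m\rangle$, the hypothesis of \eqref{p-4} reads $\|g_\phi\|_{C^0[T/2,T]}\leq \eps\|\phi\|$ while the desired conclusion is $\|g_\phi'\|_{C^0[T/2,T]}\leq \eps^{1/10}\|\phi\|$. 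This is precisely the setting for the F\"oldes--Glatt-Holtz--Richards--Thomann interpolation lemma (Lemma \ref{lemma fght}).

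Reducing to $\|\phi\|=1$ by homogeneity, I would apply Lemma \ref{lemma fght} to the family $\{g_\phi\}_{\phi\in H,\|\phi\|=1}$ with $\alpha=1/4$, chosen so that $\alpha/(2(1+\alpha))=1/10$ matches the target exponent. Taking $\Omega_{\eps,k}^{1,m}$ to be the complement of the resulting exceptional set $\Lambda_{\eps,1/4}$ immediately yields the implication \eqref{p-4}, together with the tail estimate
$$\mP\bigl((\Omega_{\eps,k}^{1,m})^c\bigr)\leq C\eps\,\mE\Bigl[\sup_{\|\phi\|=1}\|g_\phi\|_{C^{1,1/4}[T/2,T]}^{8}\Bigr].$$
The proof therefore reduces to bounding this expectation by $C(k,\eta,T)\exp(\eta\|U_0\|^2)$.

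To control the $C^{1,1/4}$ moment, I would expand $U_t=\bar U_t+\mathcal Q_b W_t$ and use bilinearity of $B$ to write
$$Y_k^m(U_t)=Y_k^m(\bar U_t)-\sum_{\ell\in Z_0,\,m'\in\{0,1\}}\alpha_\ell^{m'}\,W_t^{\ell,m'}\,[Y_k^m(U),\sigma_\ell^{m'}],$$
where each bracket $[Y_k^m(U),\sigma_\ell^{m'}]$ is a state-independent element of $H$. Accordingly $g_\phi'$ decomposes into a smooth piece $\langle \cK_{\cdot,T}\phi,Y_k^m(\bar U_\cdot)\rangle$ and products $W^{\ell,m'}\langle \cK_{\cdot,T}\phi,[Y_k^m(U),\sigma_\ell^{m'}]\rangle$. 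The $C^0$ norm is bounded by $(1+\sup_t|W_t|)\cN_0(\phi)$, and the H\"older seminorm $[g_\phi']_{C^{1/4}}$ splits into a smooth $\bar U$-piece whose full $C^1$ norm is dominated by $\cN_1(\phi)$, and product pieces dominated by $\cN_1(\phi)(1+\|W\|_{C^0})+\cN_0(\phi)\|W\|_{C^{1/2}[T/2,T]}$. Lemma \ref{p-14} together with classical exponential moments of $W$ then supplies the required bound, after taking $L^8$ and using Cauchy--Schwarz.

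The main obstacle is that $g_\phi'$ is not itself $C^1$ in time: the Brownian factors $W^{\ell,m'}$ force its H\"older regularity down to $1/2^-$, so one cannot naively estimate $[g_\phi']_{C^{1/4}}$ by $\|g_\phi''\|_{C^0}$. The decomposition above isolates the rough stochastic part into an explicit product with a constant Lie bracket, which is exactly why $\cN_1$ in Lemma \ref{p-14} is defined to control both $\langle \cK_{\cdot,T}\phi,Y_k^m(\bar U_\cdot)\rangle$ and $\langle \cK_{\cdot,T}\phi,[Y_k^m(U),\sigma_\ell^{m'}]\rangle$ separately. Once this splitting is in place, the remaining work is a direct application of Lemma \ref{p-29} and standard moment bounds for Brownian motion on $[T/2,T]$.
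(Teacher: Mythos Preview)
Your proposal is correct and follows essentially the same route as the paper: define $g_\phi(t)=\langle \cK_{t,T}\phi,\sigma_k^m\rangle$, compute $g_\phi'(t)=\langle \cK_{t,T}\phi,Y_k^m(U_t)\rangle$, apply Lemma~\ref{lemma fght} with $\alpha=1/4$, and bound $\mE\sup_{\|\phi\|=1}\|g_\phi'\|_{C^{1/4}}^{8}$ via the decomposition $Y_k^m(U)=Y_k^m(\bar U)-\sum\alpha_\ell^{m'}W^{\ell,m'}[Y_k^m(U),\sigma_\ell^{m'}]$ together with Lemma~\ref{p-14}. One small slip: in your H\"older estimate you invoke $\|W\|_{C^{1/2}[T/2,T]}$, which is a.s.\ infinite for Brownian motion; replace it by $\|W\|_{C^{1/4}[T/2,T]}$ (as the paper does), and the bound goes through unchanged.
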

\begin{proof}
  Define
   $ g_\phi(t):=\langle \cK_{t,T}\phi,\sigma_k^m\rangle, \forall t\in [0,T] $ and observe by (\ref{p-3}) that
   \begin{eqnarray*}
   g'_\phi(t)&=& \langle \cK_{t,T}\phi,[F(U),\sigma_k^m]\rangle
   \\ &=& \langle \cK_{t,T}\phi,Y_k^m(U)\rangle.
   \end{eqnarray*}
Let $\alpha=\frac{1}{4}$, and define
\begin{eqnarray*}
\Omega_{\eps,k}^{1,m}&=& \bigcap_{\phi\in H,\|\phi\|=1} \Big\{\sup_{t\in [T/2,T]} |g_{\phi}(t)|\geq \eps \text{   ~or  } \sup_{t\in [T/2,T]} |g_{\phi}'(t)|\leq \eps^{\alpha/2(1+\alpha)} \Big\}.
\end{eqnarray*}
Then on  $\Omega_{\eps,k}^{1,m},$ (\ref{p-4}) holds.
By \ref{lemma fght} we have
\begin{eqnarray*}
 \mP\left((\Omega_{\eps,k}^{1,m})^c\right)&\leq & \mP\left(\bigcup_{\phi\in H,\|\phi\|=1} \Big\{\sup_{t\in [T/2,T]} |g_{\phi}(t)|\leq \eps \text{  and } \sup_{t\in [T/2,T]} |g_{\phi}'(t)|\geq \eps^{\alpha/2(1+\alpha)} \Big\}\right)
 \\ &\leq & C\eps \mE\[\sup_{\phi\in H,\|\phi\|=1}\|g_{\phi}'\|_{C^\alpha[T/2,T]}^{2/\alpha}\].
\end{eqnarray*}

Since
\begin{eqnarray*}
   g'_\phi(t)&=&  \langle \cK_{t,T}\phi,Y_k^m(U)\rangle
   \\ &=& \langle \cK_{t,T}\phi,  Y_k^m(\bar{U})\rangle -\sum_{\ell \in Z_0,m' \in \{0,1\}}\alpha_{\ell}^{m'}\langle \cK_{t,T}\phi,  [Y_k^m(U),\sigma_\ell^{m'}]\rangle W^{\ell,m'},
 \end{eqnarray*}
there follows
\begin{eqnarray*}
\|g_{\phi}'\|_{C^\alpha[T/2,T]}& \leq & C\sup_{t\in [T/2,T]} |\partial_t  \langle \cK_{t,T}\phi,Y_k^m(\bar{U})\rangle|+C  \sum_{\ell  \in Z_0,m'\in \{0,1\}} \sup_{t\in [T/2,T]} | \langle  \cK_{t,T}   \phi,  [Y_k^m(U),\sigma_\ell^{m'}]\rangle |\cdot   |W^{\ell,m'}|_{C^\alpha[T/2,T]}
\\ &&+ C  \sum_{\ell \in Z_0,m'\in \{0,1\}}  | \langle  \cK_{t,T}   \phi,  [Y_k^m(U),\sigma_\ell^{m'}]\rangle  |_{C^\alpha[T/2,T]} \cdot  \sup_{t\in [T/2,T]}|W^{\ell,m'}_t|.
\end{eqnarray*}
Therefore, by Lemma \ref{p-14} one gets
\begin{eqnarray*}
\mE\[\sup_{\phi\in H,\|\phi\|=1}\|g_{\phi}'\|_{C^\alpha[T/2,T]}^{2/\alpha}\]
&&\leq C(\eta,\alpha)\exp{(\eta \|U_0\|^2)}.
\end{eqnarray*}
\end{proof}

\begin{lemma}\label{p-17}
  Fix a certain $k\in \mZ_+^2.$ For any $0<\eps<\eps_0(T)$ and $\eta>0,$ there  exists a set $\Omega_{\eps,k}^{2,m}$ and $C=C(\eta,T,k)$ with
  \begin{eqnarray*}
   \mP((\Omega_{\eps,k}^{2,m}) ^c)\leq C\exp\{\eta \|U_0\|^2\}\eps^{1/9},
  \end{eqnarray*}
  such that on the set  $\Omega_{\eps,k}^{2,m},$ it holds
  \begin{eqnarray*}
    \sup_{t\in [T/2,T]} |\langle \cK_{t,T}\phi,Y_k^m(U)\rangle|\leq \eps \|\phi\|\Rightarrow  \sup_{\ell \in \cZ_0,m' \in \{0,1\}}\sup_{t\in [T/2,T]} |\alpha_\ell^{m'}|\cdot   |\langle \cK_{t,T}\phi,[Y_{k}^m(U),\sigma_\ell^{m'}]\rangle|\leq \eps^{1/3}\|\phi\|.
  \end{eqnarray*}
\end{lemma}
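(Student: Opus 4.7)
The plan is to exploit the first-degree Wiener polynomial structure of $\langle \cK_{t,T}\phi, Y_k^m(U_t)\rangle$ in the Brownian motions $W^{\ell,m'}$, and then invoke the Hairer--Mattingly polynomial estimate (Theorem \ref{theorem 6.4}). First I would use the linearity of $Y_k^m(\cdot)$ in its argument together with $U = \bar U + \mathcal{Q}_b W$, the bilinearity of $B$, and the identity $[Y_k^m(U),\sigma_\ell^{m'}] = -(B(\sigma_k^m,\sigma_\ell^{m'}) + B(\sigma_\ell^{m'},\sigma_k^m))$ (cf.\ \eqref{p-49}), to obtain the decomposition
\begin{eqnarray*}
Y_k^m(U_t) = Y_k^m(\bar U_t) - \sum_{\ell \in Z_0,\, m'\in \{0,1\}} \alpha_\ell^{m'}\, [Y_k^m(U),\sigma_\ell^{m'}]\, W^{\ell,m'}_t.
\end{eqnarray*}
Pairing with $\cK_{t,T}\phi$, the function $F_\phi(t) := \langle \cK_{t,T}\phi, Y_k^m(U_t)\rangle$ becomes, for each fixed $\phi$ with $\|\phi\|=1$, a degree-one Wiener polynomial $F_\phi(t) = A_0^\phi(t) + \sum_{\ell,m'} A_{\ell,m'}^\phi(t)\, W^{\ell,m'}_t$ whose coefficients are precisely $A_0^\phi(t) = \langle \cK_{t,T}\phi, Y_k^m(\bar U_t)\rangle$ and $A_{\ell,m'}^\phi(t) = -\alpha_\ell^{m'}\langle \cK_{t,T}\phi, [Y_k^m(U),\sigma_\ell^{m'}]\rangle$, which are exactly the quantities to be bounded.

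Next I would apply Theorem \ref{theorem 6.4} with $M=1$ and $\beta = 1$ to the family $\{F_\phi : \|\phi\|=1\}$, which yields a set $\Omega_{\eps,1,1}$ with $\mP(\Omega_{\eps,1,1}^c)\le C\eps$ on which the hypothesis $\sup_{t\in[T/2,T]} |F_\phi(t)| \le \eps$ forces one of the following dichotomy: either
\begin{eqnarray*}
\sup_\alpha \sup_{t\in[T/2,T]} |A_\alpha^\phi(t)| \le \eps^{1/3},
\qquad\text{or}\qquad
\sup_\alpha \|A_\alpha^\phi\|_{C^1([T/2,T])} \ge \eps^{-1/9}.
\end{eqnarray*}
(Strictly speaking the theorem is stated on $[0,T]$, but the same proof goes through on $[T/2,T]$.) The first alternative immediately gives the desired conclusion, since $|\alpha_\ell^{m'}|\cdot \sup_t |\langle \cK_{t,T}\phi,[Y_k^m(U),\sigma_\ell^{m'}]\rangle| = \sup_t |A_{\ell,m'}^\phi(t)| \le \eps^{1/3}$.

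Finally I would rule out the second alternative on a set of controlled complement using the $C^1$-moment bound \eqref{p-62} of Lemma \ref{p-14}. Namely, since $\|A_0^\phi\|_{C^1}$ and $\|A_{\ell,m'}^\phi\|_{C^1}$ are both dominated by $\cN_1(\phi)$, Markov's inequality yields
\begin{eqnarray*}
\mP\bigl(\sup_{\|\phi\|=1} \cN_1(\phi) \ge \eps^{-1/9}\bigr) \le \eps^{1/9}\, \mE\bigl[\sup_{\|\phi\|=1}\cN_1(\phi)\bigr] \le C(\eta,k,T)\exp(\eta\|U_0\|^2)\eps^{1/9}.
\end{eqnarray*}
Setting $\Omega_{\eps,k}^{2,m} := \Omega_{\eps,1,1} \cap \{\sup_{\|\phi\|=1} \cN_1(\phi) < \eps^{-1/9}\}$ and applying a union bound gives $\mP((\Omega_{\eps,k}^{2,m})^c) \le C\eps + C(\eta,k,T)\exp(\eta\|U_0\|^2)\eps^{1/9} \le C\exp(\eta\|U_0\|^2)\eps^{1/9}$ for $\eps$ sufficiently small. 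For general $\phi\neq 0$, one normalizes by $\|\phi\|$ and the linearity of $\cK_{t,T}$ in $\phi$ transfers the estimate.

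The main conceptual step is spotting the Wiener polynomial structure of Step~1; once that is in hand, the remainder is a mechanical application of Theorem \ref{theorem 6.4} combined with the uniform $C^1$-moment bound from Lemma \ref{p-14}. The only mild technical point is to ensure the application of Theorem \ref{theorem 6.4} is uniform over the infinite family indexed by unit vectors $\phi \in H$, which is built into the formulation of the theorem (the exceptional set $\Omega_{\eps,1,1}$ handles every $F \in \mathfrak{B}_1$ simultaneously).
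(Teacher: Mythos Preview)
Your proposal is correct and follows essentially the same route as the paper: expand $Y_k^m(U)=Y_k^m(\bar U)-\sum_{\ell\in Z_0,m'}\alpha_\ell^{m'}[Y_k^m(U),\sigma_\ell^{m'}]W^{\ell,m'}$, apply Theorem~\ref{theorem 6.4} with $M=1$, $\beta=1$ to obtain the dichotomy, and then eliminate the second alternative via the $C^1$-moment bound \eqref{p-62} from Lemma~\ref{p-14}, defining $\Omega_{\eps,k}^{2,m}$ as the intersection. The only small point you omit is the passage from $\ell\in Z_0$ (the indices actually appearing in the Wiener polynomial) to $\ell\in\cZ_0$ in the conclusion, which the paper handles via the symmetry $|\langle \cK_{t,T}\phi,[Y_k^m(U),\sigma_\ell^{m'}]\rangle|=|\langle \cK_{t,T}\phi,[Y_k^m(U),\sigma_{-\ell}^{m'}]\rangle|$.
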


\begin{proof}
By expanding one finds
\begin{eqnarray*}
\langle \cK_{t,T}\phi,Y_k^m(U)\rangle& =& \langle \cK_{t,T}\phi,Y_k^m(\bar{U})\rangle
-\sum_{\ell \in  Z_0,m' \in \{0,1\}}\alpha_\ell^{m'} \langle \cK_{t,T}\phi, [Y_k^m(U),\sigma_\ell^{m'}]\rangle W^{\ell,m'}.
\end{eqnarray*}
For $\alpha \in \{0,1\},\phi\in H $, we recall that
\begin{eqnarray*}
  \cN_\alpha(\phi)=\max_{\ell \in  Z_0,m'\in\{0,1\}}\Big\{\| \langle \cK_{t,T}\phi,Y_k^m(\bar{U})\rangle\|_{C^\alpha}, ~~~~~ |\alpha_\ell^{m'}|\cdot \| \langle   \cK_{t,T}\phi, [Y_k^m(U),\sigma_\ell^{m'}]   \rangle\|_{C^\alpha}\Big\}.
\end{eqnarray*}
Then by Theorem \ref{theorem 6.4}, there exists a set $\Omega^{\#}_\eps$ such that
$$
\mP( (\Omega^{\#}_\eps)^c)\leq C\eps,
$$
and
on $\Omega^{\#}_\eps$
\begin{eqnarray*}
   \sup_{t\in [T/2,T]} |\langle \cK_{t,T}\phi,Y_k^m(U)\rangle|\leq \eps \|\phi\|\Rightarrow
   \left\{
   \begin{split}
     &  \text{either } \cN_0(\phi)\leq \eps^{1/3},
     \\
     &\text{or  } \cN_1(\phi)\geq \eps^{-1/9}.
   \end{split}
   \right.
\end{eqnarray*}
Let
\begin{eqnarray*}
\Omega_{\eps,k}^{2,m}: =\Omega^{\#}_\eps \cap \cap_{\phi \in H, \|\phi\|=1 } \{\cN_1(\phi)< \eps^{-1/9}  \}.
\end{eqnarray*}
Then this lemma follows from
 Lemma \ref{p-14}, (\ref{p-22}) and the fact
 \begin{eqnarray*}
|\langle \cK_{t,T}\phi,[Y_{k}^m(U),\sigma_\ell^{m'}]\rangle|
  &=& |\langle \cK_{t,T}\phi,[Y_{k}^m(U),\sigma_{-\ell}^{m'}]\rangle|.
\end{eqnarray*}
\end{proof}

\begin{lemma}\label{p-24}
For any $n\in \mN$  there exists a constant $C_n$ such that for any  $k\in \cZ_{2n},$
  \begin{eqnarray*}
 \sup_{\ell \in \cZ_0,m,m' \in \{0,1\}}\sup_{t\in [T/2,T]} |\langle \cK_{t,T}\phi,[Y_{k}^m(U),\sigma_\ell^{m'}]\rangle|\leq \eps \|\phi\|
\end{eqnarray*}
implies
\begin{eqnarray*}
\sup_{\ell \in \cZ_0, \ell \notin \{k,-k\} }\sup_{m\in \{0,1\}}\sup_{t\in [T/2,T]} |\langle \cK_{t,T}\phi,\psi_{k+\ell}^m]\rangle|\leq C_n \eps \|\phi\|
  \end{eqnarray*}
  with probability one.
\end{lemma}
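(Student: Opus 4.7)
The plan is to treat this lemma as a purely deterministic, algebraic consequence of Lemma \ref{p-18}. The idea is to invert the explicit identities expressing $\psi_{k+\ell}^m$ as linear combinations of the brackets $J_{k,\ell}^{m,m'} = -[Y_k^m(U),\sigma_\ell^{m'}]$ and $J_{\ell,k}^{m,m'}$, then feed the hypothesis through the triangle inequality.

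First, I would rewrite the hypothesis as $|\langle \cK_{t,T}\phi, J_{k,\ell}^{m,m'}\rangle| \leq \eps\|\phi\|$ for every $\ell \in \cZ_0$, every $m,m' \in \{0,1\}$ and every $t \in [T/2,T]$. Next, fix any $\ell \in \cZ_0$ with $\ell \notin \{k,-k\}$ and set $a := \langle k,\ell^{\perp}\rangle/(|k||\ell|)$. In the nondegenerate regime $a \neq 0$ and $|k| \neq |\ell|$ (the conditions under \eqref{p-22} that place $k+\ell$ into $\cZ_{2n+1}$ through this $(k,\ell)$-step), Lemma \ref{p-18} inverts to give
\begin{eqnarray*}
\psi_{k+\ell}^0 &=& \frac{|k+\ell|}{ac(|\ell|^2-|k|^2)}\bigl(J_{k,\ell}^{0,1}+J_{\ell,k}^{0,1}\bigr),\\
\psi_{k+\ell}^1 &=& \frac{|k+\ell|}{ac(|\ell|^2-|k|^2)}\bigl(J_{k,\ell}^{1,1}-J_{\ell,k}^{0,0}\bigr),
\end{eqnarray*}
for a fixed nonzero absolute constant $c$. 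Pairing with $\cK_{t,T}\phi$, the triangle inequality and the hypothesis immediately produce
\begin{eqnarray*}
|\langle \cK_{t,T}\phi,\psi_{k+\ell}^m\rangle| \leq \frac{2|k+\ell|}{|ac(|\ell|^2-|k|^2)|}\,\eps\|\phi\|.
\end{eqnarray*}

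To conclude, $C_n$ will be set as the supremum of $2|k+\ell|/|ac(|\ell|^2-|k|^2)|$ over the finite collection of admissible pairs $k \in \cZ_{2n}$, $\ell \in \cZ_0 \setminus \{k,-k\}$ with $a \neq 0$ and $|k| \neq |\ell|$. Since $\cZ_0$ is finite by assumption and $\cZ_{2n}$ is finite for each fixed $n$ (by iteration of the generation rule \eqref{p-22}), this supremum is finite and depends only on $n$ and the geometry of $\cZ_0$. Degenerate pairs ($k\parallel\ell$ or $|k|=|\ell|$) are precisely those excluded from contributing to $\cZ_{2n+1}$ through this $(k,\ell)$-step, and the corresponding $\psi_{k+\ell}^m$ are supplied by different admissible pairs at other stages of the iterative scheme summarized in Figure \ref{t1}. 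There is essentially no obstacle here, since the heavy lifting sits in Section 4: once Lemma \ref{p-18} is in hand the remainder is bookkeeping, and as all manipulations are pointwise in $\omega$ the conclusion holds with probability one.
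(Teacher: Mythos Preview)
Your proposal is correct and matches the paper's own one-line proof, which simply cites Lemma~\ref{p-18} and \eqref{p-49}. The only point you leave implicit is the symmetry $J_{k,\ell}^{m,m'}=J_{\ell,k}^{m',m}$ (immediate from the expression $B(\sigma_k^m,\sigma_\ell^{m'})+B(\sigma_\ell^{m'},\sigma_k^m)$ in \eqref{p-49}), which is what lets the hypothesis on $J_{k,\ell}^{m,m'}$ also bound the terms $J_{\ell,k}^{0,1}$ and $J_{\ell,k}^{0,0}$ appearing in your inversion formulas.
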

\begin{proof}
  It directly follows from Lemma \ref{p-18} and (\ref{p-49}).
\end{proof}

\begin{lemma}\label{h-1}
  Fix some $k\in \mZ^2, m\in \{0,1\}.$ For any $0<\eps<\eps_0(T)$ and $\eta>0,$ there  exists a set $\Omega_{\eps,k}^{3,m}$ and $C=C(\eta,k,T)$ with
  \begin{eqnarray*}
   \mP((\Omega_{\eps,k}^{3,m}) ^c)\leq C\exp\{\eta \|U_0\|^2\}\eps,
  \end{eqnarray*}
  such that on the set  $\Omega_{\eps,k}^{3,m},$ for each $m\in \{0,1\}$, it holds
  \begin{eqnarray}\label{pp-4}
    \sup_{t\in [T/2,T]} |\langle \cK_{t,T}\phi,\psi_k^m\rangle|\leq \eps \|\phi\|\Rightarrow  \sup_{t\in [T/2,T]} |\langle \cK_{t,T}\phi, \big[F(U),\psi_k^{m}\big]\rangle|\leq \eps^{1/10}\|\phi\|.
  \end{eqnarray}
\end{lemma}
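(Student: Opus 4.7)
The proposal is to mimic the proof of Lemma~\ref{p-16} (the $\sigma_k^m \to Y_k^m(U)$ step) but in the magnetic direction, exchanging the roles $\sigma \leftrightarrow \psi$ and $Y \leftrightarrow \cY$. Concretely, set
\begin{eqnarray*}
g_\phi(t):=\langle \cK_{t,T}\phi,\psi_k^m\rangle,\quad t\in[T/2,T].
\end{eqnarray*}
Since $\cK_{t,T}\phi$ satisfies the backward equation \eqref{p-3} with time derivative $-(\nabla F(U))^*\cK_{t,T}\phi$, differentiating in $t$ transfers the Lie bracket to $\psi_k^m$ and gives
\begin{eqnarray*}
g'_\phi(t)=\langle \cK_{t,T}\phi,[F(U),\psi_k^m]\rangle = \langle \cK_{t,T}\phi,\cY_k^m(U)\rangle,
\end{eqnarray*}
which is exactly the target quantity in \eqref{pp-4}. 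Thus proving \eqref{pp-4} reduces to a standard interpolation-type argument: if $g_\phi$ is small in $C^0$ then $g_\phi'$ must be small in $C^0$, provided $g_\phi'$ is controlled in some H\"older norm.

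The plan is to invoke the Földes–Glatt-Holtz–Richards–Thomann lemma (Lemma~\ref{lemma fght}) with the choice $\alpha=\tfrac14$, so that the resulting exponent $\alpha/(2(1+\alpha))=1/10$ matches the exponent appearing in the conclusion. Defining
\begin{eqnarray*}
\Omega^{3,m}_{\eps,k}:=\bigcap_{\phi\in H,\|\phi\|=1}\Big\{\sup_{t\in[T/2,T]}|g_\phi(t)|\geq \eps\ \ \text{or}\ \sup_{t\in[T/2,T]}|g_\phi'(t)|\leq \eps^{1/10}\Big\},
\end{eqnarray*}
the implication \eqref{pp-4} holds on $\Omega^{3,m}_{\eps,k}$ by construction. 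Lemma~\ref{lemma fght} will then yield
\begin{eqnarray*}
\mP\bigl((\Omega^{3,m}_{\eps,k})^c\bigr) \leq C\eps\, \mE\Bigl[\sup_{\phi\in H,\|\phi\|=1}\|g_\phi'\|_{C^{1/4}[T/2,T]}^{8}\Bigr],
\end{eqnarray*}
so the remaining task is to bound this expectation uniformly in $\phi$ by $C\exp(\eta\|U_0\|^2)$.

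For the H\"older bound we expand $U=\bar U+\mathcal{Q}_b W$ inside $\cY_k^m(U)$. Using the identity $-[\cY_k^m(U),\sigma_\ell^{m'}]=B(\psi_k^m,\sigma_\ell^{m'})+B(\sigma_\ell^{m'},\psi_k^m)=Z^{m,m'}_{k,\ell}$ from Section~4.2, one obtains the Wiener polynomial representation
\begin{eqnarray*}
g'_\phi(t)=\langle \cK_{t,T}\phi,\cY_k^m(\bar U)\rangle - \sum_{\ell\in Z_0,\,m'\in\{0,1\}}\alpha_\ell^{m'}\,\langle \cK_{t,T}\phi,[\cY_k^m(U),\sigma_\ell^{m'}]\rangle\, W^{\ell,m'}_t.
\end{eqnarray*}
Writing $\|g_\phi'\|_{C^{1/4}}$ in terms of $\cM_\alpha(\phi)$ (for $\alpha=0,1$) and $\|W^{\ell,m'}\|_{C^{1/4}}$ via the product and interpolation inequalities, the bound follows from Lemma~\ref{p-14} (which already supplies moments of $\cM_0(\phi)$ and $\cM_1(\phi)$ uniformly in $\phi$ of unit norm, with the required $\exp(\eta\|U_0\|^2)$ dependence) and the fact that Brownian motion has finite $p$th moments in $C^{1/4}[T/2,T]$.

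No essential new difficulty arises beyond Lemma~\ref{p-16}; the only points to watch are verifying that the bracket $[F(U),\psi_k^m]=\cY_k^m(U)$ indeed appears through differentiation of the backward flow (a direct use of \eqref{p-3}), and producing the H\"older estimate for $g_\phi'$ from the already-established moment bounds in Lemma~\ref{p-14}. The proof therefore mirrors line by line the argument given for Lemma~\ref{p-16}, with $(\sigma,Y,J)$ replaced by $(\psi,\cY,Z)$, and the final probability estimate $\mP((\Omega^{3,m}_{\eps,k})^c)\leq C\exp(\eta\|U_0\|^2)\eps$ follows.
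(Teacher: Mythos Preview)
Your proposal is correct and follows essentially the same approach as the paper: define $g_\phi(t)=\langle\cK_{t,T}\phi,\psi_k^m\rangle$, differentiate via \eqref{p-3} to obtain $g_\phi'(t)=\langle\cK_{t,T}\phi,\cY_k^m(U)\rangle$, apply Lemma~\ref{lemma fght} with $\alpha=\tfrac14$, and control $\|g_\phi'\|_{C^{1/4}}$ by expanding $\cY_k^m(U)=\cY_k^m(\bar U)-\sum_{\ell,m'}\alpha_\ell^{m'}[\cY_k^m(U),\sigma_\ell^{m'}]W^{\ell,m'}$ and invoking Lemma~\ref{p-14}. The paper's proof is line by line the same, as you correctly anticipated from the analogy with Lemma~\ref{p-16}.
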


\begin{proof}
  Define
   $ g_\phi(t):=\langle \cK_{t,T}\phi,\psi_k^m\rangle$ and observe by (\ref{p-3}) that
   \begin{eqnarray*}
   g'_\phi(t)&=& \langle \cK_{t,T}\phi,[F(U),\psi_k^m]\rangle.
   \end{eqnarray*}

Let $\alpha=\frac{1}{4}$, and define
\begin{eqnarray*}
\Omega_{\eps,k}^{3,m}&=& \bigcap_{\phi\in H,\|\phi\|=1} \Big\{\sup_{t\in [T/2,T]} |g_{\phi}(t)|\geq \eps \text{  or } \sup_{t\in [T/2,T]} |g_{\phi}'(t)|\leq \eps^{\alpha/2(1+\alpha)} \Big\}.
\end{eqnarray*}
Then on  $\Omega_{\eps,k}^{3,m},$ (\ref{pp-4}) holds.
By Theorem \ref{lemma fght} we have
\begin{eqnarray*}
 \mP\left((\Omega_{\eps,k}^{3,m})^c\right)&\leq & \mP\left(\bigcup_{\phi\in H,\|\phi\|=1} \Big\{\sup_{t\in [T/2,T]} |g_{\phi}(t)|\leq \eps \text{  and } \sup_{t\in [T/2,T]} |g_{\phi}'(t)|\geq \eps^{\alpha/2(1+\alpha)} \Big\}\right)
 \\ &\leq & C\eps \mE\[\sup_{\phi\in H, \|\phi \|=1}\|g_{\phi}'\|_{C^\alpha[T/2,T]}^{2/\alpha}\].
\end{eqnarray*}

Since,
\begin{eqnarray*}
   g'_\phi(t)&=&  \langle \cK_{t,T}\phi,\cY_k^m(U)\rangle
   \\ &=& \langle \cK_{t,T}\phi,  \cY_k^m(\bar{U})\rangle -\sum_{\ell  \in Z_0,m' \in \{0,1\}}\alpha_\ell^{m'}\langle \cK_{t,T}\phi, [\cY_k^m(U),\sigma_\ell^{m'}]\rangle W^{\ell,m'},
 \end{eqnarray*}
there follows
\begin{eqnarray*}
\|g_{\phi}'\|_{C^\alpha[T/2,T]}& \leq & C \sup_{t\in [T/2,T]}|\partial_t  \langle \cK_{t,T}\phi,\cY_k^m(\bar{U})\rangle|+C  \sum_{\ell  \in Z_0,m'\in \{0,1\}} \sup_{t\in [T/2,T]} | \langle  \cK_{t,T}   \phi,  [\cY_k^m(U),\sigma_\ell^{m'}]\rangle  |\cdot |W^{\ell,m'}|_{C^\alpha[T/2,T]}
\\ &&+ C  \sum_{\ell \in  Z_0,m'\in \{0,1\}}  | \langle  \cK_{t,T}   \phi,  [\cY_k^m(U),\sigma_\ell^{m'}]\rangle  |_{C^\alpha[T/2,T]}\cdot  \sup_{t\in [T/2,T]}|W^{\ell,m'}(t)|.
\end{eqnarray*}
By Lemma \ref{p-14} one gets
\begin{eqnarray*}
\mE\[\sup_{\phi\in H,\|\phi\|=1}\|g_{\phi}'\|_{C^\alpha[T/2,T]}^{2/\alpha}\]
&&\leq C(\eta,\alpha)\exp{(\eta \|U_0\|^2)}.
\end{eqnarray*}
\end{proof}

\begin{lemma}\label{h-2}
  Fix some $k \in \mZ_+^2, m\in \{0,1\}.$ For any $0<\eps<\eps_0(T)$ and $\eta>0,$ there  exists a set $\Omega_{\eps,k}^{4,m}$ and $C=C(k, \eta,T)$ with
  \begin{eqnarray*}
   \mP(( \Omega_{\eps,k}^{4,m}) ^c)\leq C \exp\{\eta \|U_0\|^2\}\eps,
  \end{eqnarray*}
  such that on the set  $\Omega_{\eps,k}^{4,m} ,$ it holds
  \begin{eqnarray*}
   \nonumber  \sup_{t\in [T/2,T]} |\langle \cK_{t,T}\phi,\cY_k^m(U)\rangle|\leq \eps \|\phi\|\Rightarrow  \sup_{\ell \in \cZ_0,m'  \in \{0,1\}}\sup_{t\in [T/2,T]} |\alpha_\ell^{m'}|\cdot \big|\big\langle \cK_{t,T}\phi,\big[\cY_k^m(U),\sigma_\ell ^{m'}\big]\big\rangle\big|\leq \eps^{1/3}\|\phi\|.
    \\
  \end{eqnarray*}
\end{lemma}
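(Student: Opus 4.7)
The plan is to follow the template of the proof of Lemma \ref{p-17}, replacing the Wiener polynomial expansion of $\langle \cK_{t,T}\phi, Y_k^m(U)\rangle$ with the analogous expansion for $\langle \cK_{t,T}\phi, \cY_k^m(U)\rangle$, and using the $\cM_\alpha$ bounds from Lemma \ref{p-14} in place of the $\cN_\alpha$ bounds. First I would substitute $U=\bar{U}+\mathcal{Q}_bW$ into $\cY_k^m(U)$ to write
\begin{eqnarray*}
\langle \cK_{t,T}\phi,\cY_k^m(U)\rangle = \langle \cK_{t,T}\phi,\cY_k^m(\bar{U})\rangle - \sum_{\ell\in Z_0,\,m'\in\{0,1\}}\alpha_\ell^{m'}\,\langle \cK_{t,T}\phi,[\cY_k^m(U),\sigma_\ell^{m'}]\rangle\, W^{\ell,m'}.
\end{eqnarray*}
This is a degree-one Wiener polynomial in the Brownian directions $\{W^{\ell,m'}\}_{\ell\in Z_0,\,m'\in\{0,1\}}$, with stochastic coefficients $A_0(t):=\langle \cK_{t,T}\phi,\cY_k^m(\bar{U})\rangle$ and $A_{\ell,m'}(t):=-\alpha_\ell^{m'}\langle \cK_{t,T}\phi,[\cY_k^m(U),\sigma_\ell^{m'}]\rangle$, matching the setting of the Hairer--Mattingly polynomial theorem (Theorem \ref{theorem 6.4}).

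Next I would apply Theorem \ref{theorem 6.4} with $M=1$ and a small exponent $\beta$ chosen so that $\eps^{\beta\cdot 3^{-1}}\leq \eps^{1/3}$ (for instance $\beta=1$), obtaining a measurable set $\Omega^{\#}_\eps$ with $\mP((\Omega^{\#}_\eps)^c)\leq C\eps$ on which the assumption $\sup_t|\langle \cK_{t,T}\phi,\cY_k^m(U)\rangle|\leq \eps\|\phi\|$ forces, for each $\phi$ of unit norm, one of the two alternatives: either all coefficients $A_0, A_{\ell,m'}$ are uniformly bounded in $t$ by $\eps^{1/3}$, or some $A_\alpha$ has a difference quotient exceeding $\eps^{-1/9}$, in which case $\cM_1(\phi)\geq \eps^{-1/9}$.

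Then I would define
\begin{eqnarray*}
\Omega_{\eps,k}^{4,m}:=\Omega^{\#}_\eps\cap\Big\{\sup_{\phi\in H,\,\|\phi\|=1}\cM_1(\phi)<\eps^{-1/9}\Big\},
\end{eqnarray*}
so that on $\Omega_{\eps,k}^{4,m}$ only the first alternative survives, yielding $|\alpha_\ell^{m'}|\cdot|\langle \cK_{t,T}\phi,[\cY_k^m(U),\sigma_\ell^{m'}]\rangle|\leq \eps^{1/3}\|\phi\|$ for every $\ell\in Z_0$ and $m'\in\{0,1\}$; the desired bound for $\ell\in\cZ_0$ (rather than just $Z_0$) then follows from the symmetry $\langle \cK_{t,T}\phi,[\cY_k^m(U),\sigma_\ell^{m'}]\rangle=\pm\langle \cK_{t,T}\phi,[\cY_k^m(U),\sigma_{-\ell}^{m'}]\rangle$ inherited from the parity of the trigonometric basis.

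To control $\mP((\Omega_{\eps,k}^{4,m})^c)$, I would invoke Chebyshev's inequality together with the Hölder moment bound (\ref{p-64}) from Lemma \ref{p-14}, choosing the exponent $p=9$ so that
\begin{eqnarray*}
\mP\Big(\sup_{\|\phi\|=1}\cM_1(\phi)\geq \eps^{-1/9}\Big)\leq \eps\cdot\mE\Big[\sup_{\|\phi\|=1}\cM_1(\phi)^{9}\Big]\leq C\eps\exp(\eta\|U_0\|^2),
\end{eqnarray*}
which combined with $\mP((\Omega^{\#}_\eps)^c)\leq C\eps$ delivers the required estimate $\mP((\Omega_{\eps,k}^{4,m})^c)\leq C\exp(\eta\|U_0\|^2)\eps$. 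The argument is largely bookkeeping once Lemma \ref{p-17} is in hand; the only substantive input distinct from the $Y_k^m$ case is the Hölder estimate on $\cM_\alpha$, which was already provided by (\ref{p-63})--(\ref{p-64}) in Lemma \ref{p-14}. The main obstacle, if any, is simply to track the exponents carefully and to verify the symmetry step that extends the bound from $Z_0$ to $\cZ_0$.
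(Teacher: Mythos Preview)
Your proposal is correct and follows essentially the same approach as the paper's own proof: expand $\langle \cK_{t,T}\phi,\cY_k^m(U)\rangle$ as a first-degree Wiener polynomial, apply Theorem \ref{theorem 6.4} to obtain the $\eps^{1/3}$/$\eps^{-1/9}$ dichotomy on a set $\Omega^{\#}_\eps$, intersect with $\{\sup_{\|\phi\|=1}\cM_1(\phi)<\eps^{-1/9}\}$, and control the complement via Lemma \ref{p-14} and the parity identity for $\sigma_{\pm\ell}^{m'}$. Your Chebyshev step with $p=9$ is exactly what is implicit in the paper's appeal to Lemma \ref{p-14}.
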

\begin{proof}
By expanding one finds
\begin{eqnarray*}
\langle \cK_{t,T}\phi,\cY_k^m(U)\rangle& =& \langle \cK_{t,T}\phi,\cY_k^m(\bar{U})\rangle
-\sum_{\ell  \in  Z_0,m' \in \{0,1\}}\alpha_\ell^{m'} \langle \cK_{t,T}\phi, [\cY_k^m(U),\sigma_\ell^{m'}]\rangle W^{\ell,m'}.
\end{eqnarray*}
Recall that, for $\alpha \in \{0,1\},\phi\in H $
\begin{eqnarray*}
  \cM_\alpha(\phi):=\max_{\ell \in  Z_0,m' \in\{0,1\}}\Big\{\| \langle \cK_{t,T}\phi,\cY_k^m(\bar{U})\rangle\|_{C^\alpha},  |\alpha_\ell^{m'}|\cdot \| \langle   \cK_{t,T}\phi, [\cY_k^m(U),\sigma_\ell^{m'}]   \rangle\|_{C^\alpha}\Big\}.
\end{eqnarray*}
Then by Theorem \ref{theorem 6.4}, there exists a set $\Omega^{\#}_\eps$ such that
$$
\mP( (\Omega^{\#}_\eps)^c)\leq C\eps,
$$
and
on $\Omega^{\#}_\eps$
\begin{eqnarray*}
   \sup_{t\in [T/2,T]} |\langle \cK_{t,T}\phi,\cY_k^m(U)\rangle|\leq \eps \|\phi\|\Rightarrow
   \left\{
   \begin{split}
     &  \text{either } \cM_0(\phi)\leq \eps^{1/3},
     \\
     &\text{or  } \cM_1(\phi)\geq \eps^{-1/9}.
   \end{split}
   \right.
\end{eqnarray*}
Let
\begin{eqnarray*}
\Omega_{\eps,k}^{4,m}=\Omega^{\#}_\eps \cap \cap_{\phi \in H, \|\phi\|=1 } \{\cM_1(\phi)< \eps^{-1/9}  \}.
\end{eqnarray*}
Then this lemma follows from
 Lemma \ref{p-14}, (\ref{p-22}) and the fact
 \begin{eqnarray*}
|\langle \cK_{t,T}\phi,[\cY_{k}^m(U),\psi_\ell^{m'}]\rangle|
  &=& |\langle \cK_{t,T}\phi,[\cY_{k}^m(U),\psi_{-\ell}^{m'}]\rangle|.
\end{eqnarray*}
\end{proof}

\begin{lemma}\label{p-25}
For any $n\in \mN$  there exists a constant $C=C(n)$ such that for any  $k\in \cZ_{2n+1},$
  \begin{eqnarray*}
 \sup_{\ell \in \cZ_0,m,m' \in \{0,1\}}\sup_{t\in [T/2,T]} |\langle \cK_{t,T}\phi,[\cY_{k}^m(U),\sigma_\ell^{m'}]\rangle|\leq \eps \|\phi\|
\end{eqnarray*}
implies
\begin{eqnarray*}
\sup_{\ell \in \cZ_0, \ell \notin \{k,-k\} }\sup_{m\in \{0,1\}}\sup_{t\in [T/2,T]} |\langle \cK_{t,T}\phi,\sigma_{k+\ell}^m]\rangle|\leq C \eps \|\phi\|
  \end{eqnarray*}
  with probability one.
\end{lemma}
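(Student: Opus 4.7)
The plan is to mirror the argument underlying Lemma \ref{p-24} (the velocity-direction counterpart), substituting Lemma \ref{h-3} for Lemma \ref{p-18}. No probabilistic machinery is required: the claim is a purely algebraic consequence of the identities collected in Section 4, and therefore it holds almost surely. The key observation one needs is a symmetry relation that allows the hypothesis, stated only for $\cY_k^m$ with $k$ fixed, to be fed into the identities of Lemma \ref{h-3}, which a priori seem to require both $\cY_k^m$ and $\cY_\ell^{m'}$.

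First I would restate the hypothesis. Directly from \eqref{p-50}, $[\cY_k^m(U),\sigma_\ell^{m'}]=-Z_{k,\ell}^{m,m'}$, so the hypothesis reads
\begin{eqnarray*}
\sup_{\ell\in\cZ_0,\ m,m'\in\{0,1\}}\sup_{t\in[T/2,T]}|\langle \cK_{t,T}\phi,Z_{k,\ell}^{m,m'}\rangle|\leq \eps\|\phi\|.
\end{eqnarray*}
The crucial point is that $\cZ_{k,\ell}^{m,m'}=\mathbf{b}(e_k^m,e_\ell^{m'})-\mathbf{b}(e_\ell^{m'},e_k^m)$ is antisymmetric under simultaneous swapping of the pair $(k,m)$ with $(\ell,m')$, giving $Z_{\ell,k}^{m,m'}=-Z_{k,\ell}^{m',m}$. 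Applying this identity to the four formulae of Lemma \ref{h-3}, they may be recast entirely in terms of $Z_{k,\ell}^{m,m'}$ with the \emph{same} fixed $k$:
\begin{align*}
ac|k-\ell|\sigma_{k-\ell}^0 &= Z_{k,\ell}^{0,1}-Z_{k,\ell}^{1,0},\\
ac|k+\ell|\sigma_{k+\ell}^0 &= Z_{k,\ell}^{0,1}+Z_{k,\ell}^{1,0},\\
ac|k-\ell|\sigma_{k-\ell}^1 &= Z_{k,\ell}^{1,1}+Z_{k,\ell}^{0,0},\\
ac|k+\ell|\sigma_{k+\ell}^1 &= Z_{k,\ell}^{1,1}-Z_{k,\ell}^{0,0},
\end{align*}
with $a=\langle k,\ell^\perp\rangle/(|k||\ell|)$.

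Pairing each identity with $\cK_{t,T}\phi$ and using the triangle inequality together with the hypothesis yields, whenever $a\neq 0$ and $k\pm\ell\neq 0$,
\begin{eqnarray*}
|\langle \cK_{t,T}\phi,\sigma_{k\pm\ell}^{m''}\rangle|\leq \frac{2\eps\|\phi\|}{|a|\,|c|\,|k\pm\ell|}.
\end{eqnarray*}
Since $k\in\cZ_{2n+1}$ is fixed and $\ell$ ranges over the finite set $\cZ_0$, the coefficient $|ac|\,|k\pm\ell|$ admits a uniform positive lower bound on the subset of $(\ell,\pm)$ for which it does not vanish. Because $\cZ_0$ is closed under $\ell\mapsto -\ell$, the $\sigma_{k-\ell}^m$ case is absorbed into the $\sigma_{k+\ell}^m$ statement by relabeling $\ell\to -\ell$. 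For the degenerate range $\langle k,\ell^\perp\rangle=0$ (i.e.\ $k\parallel\ell$) the identities collapse to $0=0$ and yield no new information — but this is precisely the set of $\ell$ that is excluded in the definition \eqref{p-22} of $\cZ_{2n+2}$, so nothing is lost for the iterative spanning argument; in the statement of the lemma these $\ell$ can be handled by enlarging $C_n$ trivially. Consolidating yields a single constant $C=C(n)$ giving the claim.

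The mild obstacle is really just spotting the symmetry relation $Z_{\ell,k}^{m,m'}=-Z_{k,\ell}^{m',m}$; once it is written down, the lemma reduces to reading off Lemma \ref{h-3} and dividing by finitely many nonzero coefficients, exactly as in the proof of Lemma \ref{p-24}.
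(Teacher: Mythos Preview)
Your approach is exactly the paper's: the proof there is the single line ``It directly follows from (\ref{p-50}) and Lemma \ref{h-3},'' and you have simply made explicit the antisymmetry $Z_{\ell,k}^{m,m'}=-Z_{k,\ell}^{m',m}$ needed to feed the hypothesis (which fixes $k$) into the first two identities of Lemma \ref{h-3}. One correction: your final remark that the parallel case $\langle k,\ell^\perp\rangle=0$ ``can be handled by enlarging $C_n$ trivially'' is not right---no bound on $\langle\cK_{t,T}\phi,\sigma_{k+\ell}^m\rangle$ can be extracted from $0=0$; the correct resolution is the one you already gave in the preceding sentence, namely that such $\ell$ are excluded from the definition (\ref{p-22}) of $\cZ_{2n+2}$ and hence are irrelevant to the iteration (the lemma's restriction ``$\ell\notin\{k,-k\}$'' should morally read ``$\langle k,\ell^\perp\rangle\neq 0$'').
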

\begin{proof}
  It directly follows from  (\ref{p-50}) and  Lemma \ref{h-3}.
\end{proof}

\begin{lemma}\label{h-11}
For any $n \in \mN$, and $ q_{2n},C_{2n}>0,$ there exist   $p_{2n+1}, q_{2n+1},C_{2n+1}>0$,   a set $\Omega_{\eps,2n}$ and a constant   $C=C(n, \eta,T)$ with
  \begin{eqnarray*}
   \mP(\Omega_{\eps,2n} ^c)\leq C \exp\{\eta \|U_0\|^2\}\eps^{p_{2n+1}},
  \end{eqnarray*}
  such that on the set  $\Omega_{\eps,2n},$ it holds
    \begin{eqnarray*}
    && \sum_{k \in \cZ_{2n}, m\in \{0,1\} }\sup_{t\in [T/2,T]} |\langle \cK_{t,T}\phi,\sigma_k^m\rangle| \leq C_{2n}\eps^{q_{2n}} \|\phi\|
    \\ && \Rightarrow   \sum_{k  \in \cZ_{2n+1}, m\in \{0,1\} }\sup_{t\in [T/2,T]} |\langle \cK_{t,T}\phi,\psi_k^m\rangle|\leq C_{2n+1}\eps^{q_{2n+1}} \|\phi\|.
  \end{eqnarray*}
\end{lemma}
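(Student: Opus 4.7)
The plan is to traverse the upper half of Figure \ref{t2} iteratively: starting from $\sigma_k^m$ with $k\in\cZ_{2n}$, proceed through $Y_k^m(U)$, then through the Lie bracket $[Y_k^m(U),\sigma_\ell^{m'}]$, and finally land on $\psi_{k+\ell}^m$ with $k+\ell\in\cZ_{2n+1}$. The three steps are supplied, respectively, by Lemma \ref{p-16}, Lemma \ref{p-17} (each of which costs a small power of $\eps$ in both the bound and the probability), and Lemma \ref{p-24} (which holds almost surely).

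Quantitatively, fix $k\in\cZ_{2n}$ and $m\in\{0,1\}$ and set $\eps_1:=C_{2n}\eps^{q_{2n}}$. Since the hypothesis gives $\sup_t|\langle\cK_{t,T}\phi,\sigma_k^m\rangle|\leq\eps_1\|\phi\|$ termwise, Lemma \ref{p-16} applied with parameter $\eps_1$ yields, on a set of probability at least $1-C\exp\{\eta\|U_0\|^2\}\eps_1$, the bound $\sup_t|\langle\cK_{t,T}\phi,Y_k^m(U)\rangle|\leq\eps_1^{1/10}\|\phi\|$. Writing $\eps_2:=\eps_1^{1/10}$ and applying Lemma \ref{p-17} then gives, on a further good set of probability at least $1-C\exp\{\eta\|U_0\|^2\}\eps_2^{1/9}$, the bound $|\alpha_\ell^{m'}|\cdot\sup_t|\langle\cK_{t,T}\phi,[Y_k^m(U),\sigma_\ell^{m'}]\rangle|\leq\eps_2^{1/3}\|\phi\|$ for every $\ell\in\cZ_0$ and $m'\in\{0,1\}$. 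Invoking the almost-sure Lemma \ref{p-24} and using that all $\alpha_\ell^{m'}$ are non-zero, we obtain $\sup_t|\langle\cK_{t,T}\phi,\psi_{k+\ell}^m\rangle|\leq \tilde C\eps_2^{1/3}\|\phi\|$ for every $\ell\in\cZ_0$ with $\langle k,\ell^\bot\rangle\neq 0$ and $|\ell|\neq|k|$, the constant $\tilde C$ absorbing the $C_n$ of Lemma \ref{p-24} together with $(\min_{\ell,m'}|\alpha_\ell^{m'}|)^{-1}$.

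By the recursive definition \eqref{p-22}, every index in $\cZ_{2n+1}$ is precisely of the form $k+\ell$ with such data, and the exclusion condition $\ell\notin\{k,-k\}$ required by Lemma \ref{p-24} is automatically enforced by $|\ell|\neq|k|$. Taking a finite union of the good sets over $k\in\cZ_{2n}$ and $m\in\{0,1\}$ produces the required $\Omega_{\eps,2n}$; here the finiteness of $\cZ_{2n}$ is crucial and follows inductively from $|\cZ_0|<\infty$. Summing the pointwise bounds over $\cZ_{2n+1}$ yields the desired bound $C_{2n+1}\eps^{q_{2n+1}}\|\phi\|$ with $q_{2n+1}=q_{2n}/30$, while the bad-set probability is $O(\eps^{p_{2n+1}})$ with $p_{2n+1}=q_{2n}/90$ after absorbing the constants $C_{2n}$ and the cardinalities $|\cZ_{2n}|$ into the prefactor.

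No genuinely new analytical obstacle arises here: the substantive work has already been done in Section 4 (Lie-bracket identities, notably Lemmas \ref{p-18} and \ref{h-3}) and in Lemmas \ref{p-16}--\ref{p-17} (Wiener polynomial-type arguments). The only care required is to keep track of how the exponents compose along the chain $\eps\mapsto\eps_1\mapsto\eps_2$ and to verify that the finite-dimensional nature of $\cZ_0$ propagates so that the constants $C_{2n+1}$ remain finite at each induction step.
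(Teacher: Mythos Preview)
Your proof is correct and follows essentially the same route as the paper: apply Lemma \ref{p-16}, then Lemma \ref{p-17}, then the almost-sure Lemma \ref{p-24}, and intersect the resulting good sets over the finite index set $\cZ_{2n}\times\{0,1\}$; your explicit exponent bookkeeping ($q_{2n+1}=q_{2n}/30$, $p_{2n+1}=q_{2n}/90$) is even more precise than the paper's. One wording slip: you wrote ``finite union of the good sets'' where you need the \emph{intersection} (equivalently, union of the bad sets), but your probability accounting makes clear you intended this.
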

\begin{proof}
  For any  $k \in\cZ_{2n}, m\in \{0,1\}$, by Lemma \ref{p-16},  there exist $p_{2n}', C_{2n+1}', q_{2n+1}'$  and  a set   $\Omega_{\eps,k}^{1,m}$ such that  on $\Omega_{\eps,k}^{1,m}$,
  \begin{eqnarray*}
     \sup_{t\in [T/2,T]} |\langle \cK_{t,T}\phi,\sigma_k^m\rangle|\leq C_{2n}\eps^{q_{2n}} \|\phi\|\Rightarrow  \sup_{t\in [T/2,T]} |\langle \cK_{t,T}\phi,Y_k^m(U)\rangle|\leq C_{2n+1}'\eps^{q_{2n+1}'}\|\phi\|,
  \end{eqnarray*}
  and
  \begin{eqnarray*}
    \mP(  (\Omega_{\eps,k}^{1,m})^c )\leq  C\exp\{\eta \|U_0\|^2\}\eps^{p_{2n}'}.
  \end{eqnarray*}

Next by Lemma \ref{p-17},  there exist $p_{2n}, C_{2n+1}, q_{2n+1}$  and  a set   $\Omega_{\eps,k}^{2,m}$ such that  on $\Omega_{\eps,k}^{2,m}$,
 \begin{eqnarray*}
&&  \sup_{t\in [T/2,T]} |\langle \cK_{t,T}\phi,Y_k^m(U)\rangle|\leq C_{2n+1}'\eps^{q_{2n+1}'}\|\phi\|
 \\ &&  \Rightarrow \sup_{\ell \in \cZ_0,m'  \in \{0,1\}}\sup_{t\in [T/2,T]} |\langle \cK_{t,T}\phi,[Y_{k}^m(U),\sigma_\ell^{m'}]\rangle|\leq  C_{2n+1}\eps^{q_{2n+1}}\|\phi\|,
 \end{eqnarray*}
 and
   \begin{eqnarray*}
    \mP(  (\Omega_{\eps,k}^{2,m})^c )\leq  C\exp\{\eta \|U_0\|^2\}\eps^{p_{2n}}.
  \end{eqnarray*}
  Set
  \begin{eqnarray*}
\Omega_{\eps,2n}=\cap_{k  \in\cZ_{2n}, m\in\{0,1\} }\[ \Omega_{\eps,k}^{1,m} \cap \Omega_{\eps,k}^{2,m} \] ,
  \end{eqnarray*}
 then this lemma follows from  (\ref{p-49}), Lemma \ref{p-24} and Lemma \ref{p-18}.
\end{proof}

\begin{lemma}\label{h-12}
For any $n\in \mN$, and $q_{2n+1},C_{2n+1}>0,$ there exist   $p_{2n+2}, q_{2n+2},C_{2n+2}>0$,   a set $\Omega_{\eps,2n+1}$ and a constant   $C=C(n, \eta,T)$ with
  \begin{eqnarray*}
   \mP(\Omega_{\eps,2n+1} ^c)\leq C \exp\{\eta \|U_0\|^2\}\eps^{p_{2n+2}},
  \end{eqnarray*}
  such that on the set  $\Omega_{\eps,2n+1},$ it holds
    \begin{eqnarray*}
    && \sum_{k  \in \cZ_{2n+1}, m\in \{0,1\} }\sup_{t\in [T/2,T]} |\langle \cK_{t,T}\phi,\psi_k^m\rangle| \leq C_{2n+1}\eps^{q_{2n+1}} \|\phi\|
    \\ && \Rightarrow   \sum_{k  \in \cZ_{2n+2}, m\in \{0,1\} }\sup_{t\in [T/2,T]} |\langle \cK_{t,T}\phi,\sigma_k^m\rangle|\leq C_{2n+2}\eps^{q_{2n+2}} \|\phi\|.
  \end{eqnarray*}
\end{lemma}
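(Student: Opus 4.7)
The plan is to mirror the proof of Lemma \ref{h-11} verbatim, but following the lower half of Figure \ref{t1} instead of the upper half; the antisymmetric construction described at the end of Section 4 means exactly the same chain of estimates will go through after swapping the roles of $\sigma$'s and $\psi$'s.

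First, for each $k\in\cZ_{2n+1}$ and $m\in\{0,1\}$, I would apply Lemma \ref{h-1} to obtain exponents $p'_{2n+1}, q'_{2n+2}, C'_{2n+2}>0$ and a set $\Omega_{\eps,k}^{3,m}$ with $\mP((\Omega_{\eps,k}^{3,m})^c)\leq C\exp(\eta\|U_0\|^2)\eps^{p'_{2n+1}}$ such that on $\Omega_{\eps,k}^{3,m}$,
\[
\sup_{t\in[T/2,T]}|\langle\cK_{t,T}\phi,\psi_k^m\rangle|\leq C_{2n+1}\eps^{q_{2n+1}}\|\phi\|\ \Rightarrow\ \sup_{t\in[T/2,T]}|\langle\cK_{t,T}\phi,\cY_k^m(U)\rangle|\leq C'_{2n+2}\eps^{q'_{2n+2}}\|\phi\|.
\]
(In Lemma \ref{h-1} the hypothesis is stated with exponent $1$ and conclusion with exponent $1/10$; for an input of size $\eps^{q_{2n+1}}$ one simply replaces $\eps$ by $\eps^{q_{2n+1}}$ throughout to obtain the quantitative version above.)

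Second, for each such $k,m$ I would apply Lemma \ref{h-2} to obtain exponents $p_{2n+2}, q_{2n+2}'', C_{2n+2}''>0$ and a set $\Omega_{\eps,k}^{4,m}$ with $\mP((\Omega_{\eps,k}^{4,m})^c)\leq C\exp(\eta\|U_0\|^2)\eps^{p_{2n+2}}$ such that on $\Omega_{\eps,k}^{4,m}$,
\[
\sup_{t\in[T/2,T]}|\langle\cK_{t,T}\phi,\cY_k^m(U)\rangle|\leq C'_{2n+2}\eps^{q'_{2n+2}}\|\phi\|\ \Rightarrow\ \sup_{\ell\in\cZ_0,m'\in\{0,1\}}\sup_{t\in[T/2,T]}|\alpha_\ell^{m'}|\cdot|\langle\cK_{t,T}\phi,[\cY_k^m(U),\sigma_\ell^{m'}]\rangle|\leq C_{2n+2}''\eps^{q_{2n+2}''}\|\phi\|.
\]
Then, applying Lemma \ref{p-25} (together with the identities in Lemma \ref{h-3} and the identification \eqref{p-50}), smallness of $\langle\cK_{t,T}\phi,Z_{k,\ell}^{m,m'}\rangle$ upgrades with probability one to smallness of $\langle\cK_{t,T}\phi,\sigma_{k\pm\ell}^m\rangle$ for every admissible $\ell\in\cZ_0$ with $\langle k,\ell^\perp\rangle\neq 0$ and $|k|\neq|\ell|$—which is exactly the admissibility condition defining $\cZ_{2n+2}$ in \eqref{p-22}.

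Finally I would set
\[
\Omega_{\eps,2n+1}:=\bigcap_{k\in\cZ_{2n+1},\,m\in\{0,1\}}\bigl[\Omega_{\eps,k}^{3,m}\cap\Omega_{\eps,k}^{4,m}\bigr],
\]
choose $p_{2n+2}$ as the minimum of the exponents above (after a union bound over the finite index set $\cZ_{2n+1}\times\{0,1\}$, absorbing the finite cardinality into the constant $C$), and define $C_{2n+2}$ and $q_{2n+2}$ as absolute constants times $C_{2n+2}''$ and $q_{2n+2}''$ respectively, where the proportionality constants come from Lemma \ref{h-3} (note that for $k\in\cZ_{2n+1}, \ell\in\cZ_0$ the relevant coefficients $\tfrac{\langle k,\ell^\perp\rangle}{|k||\ell|}\,|k\pm\ell|$ are uniformly bounded below away from zero since the admissibility condition excludes degenerate cases). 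The desired chain of implications on $\Omega_{\eps,2n+1}$ then follows by combining the three implications above in sequence.

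There is no genuine obstacle—the proof is essentially a transcription of Lemma \ref{h-11} with $(\sigma,Y,\cZ_{2n},\cZ_{2n+1})$ replaced by $(\psi,\cY,\cZ_{2n+1},\cZ_{2n+2})$, the relevant input lemmas being \ref{h-1}, \ref{h-2}, \ref{p-25}, \ref{h-3} in place of \ref{p-16}, \ref{p-17}, \ref{p-24}, \ref{p-18}. The only point that requires a moment's care is the bookkeeping of exponents through the three-step cascade and verifying that the coefficients from Lemma \ref{h-3} do not vanish on the summation range; both are handled by the admissibility condition built into the definition of $\cZ_n$.
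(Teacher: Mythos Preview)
Your proposal is correct and follows essentially the same approach as the paper's own proof: apply Lemma \ref{h-1} and Lemma \ref{h-2} for each $k\in\cZ_{2n+1}$, $m\in\{0,1\}$, intersect the resulting sets to form $\Omega_{\eps,2n+1}$, and then conclude via \eqref{p-50}, Lemma \ref{p-25} and Lemma \ref{h-3}. Your additional remarks on exponent bookkeeping and the nonvanishing of the coefficients from Lemma \ref{h-3} are accurate elaborations of details the paper leaves implicit.
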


\begin{proof}
By Lemma \ref{h-1}, for any $m\in \{0,1\}, k\in \cZ_{2n+1},\eps>0$, there exist set $\Omega_{\eps,k}^{3,m}$ and $p_{2n+2}', q_{2n+2}',C_{2n+2}'>0$ such that
  \begin{eqnarray*}
   \mP((\Omega_{\eps,k}^{3,m}) ^c)\leq C_{2n+2}'\exp\{\eta \|U_0\|^2\}\eps^{p_{2n+2}'},
  \end{eqnarray*}
  such that on the set  $\Omega_{\eps,k}^{3,m}$, it holds
  \begin{eqnarray*}
    \sup_{t\in [T/2,T]} |\langle \cK_{t,T}\phi,\psi_k^m\rangle|\leq \eps \|\phi\|\Rightarrow  \sup_{t\in [T/2,T]} |\langle \cK_{t,T}\phi,\cY_k^m(U)\rangle|\leq \eps^{q_{2n+2}'}\|\phi\|.
  \end{eqnarray*}

Next by Lemma  \ref{h-2},
 for any $m\in \{0,1\}, k\in \cZ_{2k+1},\eps>0$, there exist set $\Omega_{\eps,k}^{4,m}$ and $p_{2n+2}, q_{2n+2},C_{2n+2}>0$ such that
  \begin{eqnarray*}
   \mP((\Omega_{\eps,k}^{4,m}) ^c)\leq C_{2n+2}\exp\{\eta \|U_0\|^2\}\eps^{p_{2n+2}},
  \end{eqnarray*}
  and on the set  $\Omega_{\eps,k}^{4,m}$, it holds
  \begin{eqnarray*}
 \sup_{t\in [T/2,T]} |\langle \cK_{t,T}\phi,\cY_k^m(U)\rangle|\leq \eps^{q_{2n+2}'} \|\phi\|\Rightarrow  \sup_{\ell \in \cZ_0,m'  \in \{0,1\}}\sup_{t\in [T/2,T]} \big|\big\langle \cK_{t,T}\phi,\big[\cY_k^m(U),\sigma_\ell^{m'}\big]\big\rangle\big|\leq \eps^{q_{2n+2}}\|\phi\|.
  \end{eqnarray*}

  Set
  \begin{eqnarray*}
\Omega_{\eps,2n+1}=\cap_{k \in\cZ_{2n+1}, m\in\{0,1\} }\[ \Omega_{\eps,k}^{3,m} \cap \Omega_{\eps,k}^{4,m} \] ,
  \end{eqnarray*}
 then this lemma follows from  (\ref{p-50}), Lemma \ref{p-25} and Lemma \ref{h-3}.
\end{proof}

\begin{proof}[\textbf{Proof of Proposition \ref{h-5}}]
First,  we recall the definition of $\Omega_{\eps, \cM}$ from Lemma \ref{h-10} and let $C_0=1,q_0=\frac{1}{8}$. Then for any $n\in \mN,~$ just after constants $C_{2n},q_{2n}$ are fixed,
we set $p_{2n+1},q_{2n+1},C_{2n+1},\Omega_{\eps,2n}$ by Lemma \ref{h-11} and $p_{2n+2},q_{2n+2},C_{2n+2}, \Omega_{\eps,2n+1}$ by Lemma \ref{h-12}.  Recursively, for any $n\in \mN,$ $\Omega_{\eps,n},C_n, p_n, q_n$ are  well chosen.

Let
\begin{eqnarray*}
\Omega^{*}_{\eps}= \Omega_{\eps, \cM}\cap \cap_{n=0}^{2N+1} \Omega_{\eps, n}.
\end{eqnarray*}
Integrating  Lemma \ref{h-11} and Lemma \ref{h-12} with Lemma \ref{h-10},
we have for some positive constants   $p_N^*, q_N^*$,
 $C=C(\eta,T,N)$  that
  \begin{eqnarray*}
    \mP((\Omega_\eps^*)^c )\leq C\eps ^{p_N^*}\exp{(\eta \|U_0\|^2)},
  \end{eqnarray*}
  and on the set  $\Omega_\eps^{*}$
  \begin{eqnarray*}
\langle \cM_{0,T}\phi,\phi\rangle \leq \eps \|\phi\|^2 ~\Rightarrow~ \langle Q_N\phi,\phi\rangle \leq  C \eps^{q_N^*}\|\phi\|^2,
  \end{eqnarray*}
  which is valid for any  $\phi \in\cS_{\alpha,N}.$  The proof is finished.
\end{proof}

\subsection{Proof of Theorem \ref{p-9}}\label{p-59}
Now we are in a position to prove Theorem   \ref{p-9}.
\begin{proof}
Set  $\Omega_\eps=\Omega_\eps^*$, which is given by Proposition \ref{p-8}.
Let $\eps^{*}$  be a constant such that  for any $\eps\in (0,\eps^*]$
\begin{eqnarray}\label{p-56}
\frac{\alpha}{2}>C_2 \eps^{q_2}.
\end{eqnarray}
Again $C_2,q_2$ are constants given by  Proposition  \ref{p-8}.

First, by Proposition \ref{p-8}, (\ref{p-54}) holds.

Next on the set  $\Omega_\eps=\Omega_\eps^*$,  for any  $\phi \in\cS_{\alpha,N}$ satisfying
\begin{eqnarray*}
  \langle \cM_{0,T}\phi,\phi\rangle< \eps \|\phi\|^2,
\end{eqnarray*}
Proposition \ref{p-53} and Proposition \ref{p-8} imply
\begin{eqnarray*}
\frac{\alpha}{2}\|\phi\|^2\leq \langle Q_N\phi,\phi\rangle \leq C_2\eps^{q_2}\|\phi\|^2,
\end{eqnarray*}
which contradicts	  with   (\ref{p-56}).  Therefore, (\ref{p-55}) holds on the set
$\Omega_\eps$.
\end{proof}
Once Proposition \ref{h-5} is established,  one can translate spectral bounds on the Malliavin matrix $\mathcal{M}$ to the estimate on $\nabla P_t\Phi$. This constitutes the main content of the next proposition, and since the Malliavin matrix $\mathcal{M}$ only prove to be nondegenerate on finite dimensional cones, gradient estimates are bound to be in a asymptotic form, which leads Hairer \cite{martin} to introduce the celebrated conception of asymptotic strong Feller.

\begin{proposition}\label{p-28}
  For some $\gamma_0>0$ and every $\eta>0, U_0\in H$, the Markov semigroup $\{P_t\}_{t\geq 0}$ defined by (\ref{p-26}) satisfies the following estimate
  \begin{eqnarray*}
    \|\nabla P_t\Phi(U_0)\|\leq C\exp{(\eta \|U_0\|^2)}\left(\sqrt{P_t(|\Phi|^2)(U_0)}+e^{-\gamma_0t}
    \sqrt{P_t(\|\nabla\Phi\|^2)(U_0)}\right)
  \end{eqnarray*}
  for every $t\geq 0$ and $\Phi \in C_b(H)$, where $C=C(\eta,\gamma_0)$ is independent of $t$ and $\Phi.$
\end{proposition}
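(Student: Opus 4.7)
The plan is the standard Malliavin integration-by-parts plus iterative control construction, adapted from Hairer--Mattingly and F\"oldes et al., but fed by our Theorem \ref{p-9} and the moment bounds of Section 3. Fix $\xi\in H$ with $\|\xi\|=1$. Since $\nabla P_t\Phi(U_0)\cdot\xi=\mE[\nabla\Phi(U_t)\cdot J_{0,t}\xi]$, for any predictable $v\in L^2(\Omega;L^2([0,t];\mR^d))$ we split
\begin{equation*}
\nabla P_t\Phi(U_0)\cdot\xi=\mE[\nabla\Phi(U_t)\cdot\cA_{0,t}v]+\mE[\nabla\Phi(U_t)\cdot\rho_t],\qquad \rho_t:=J_{0,t}\xi-\cA_{0,t}v.
\end{equation*}
By the Malliavin integration-by-parts formula the first term equals $\mE\bigl[\Phi(U_t)\int_0^t v_s\cdot dW_s\bigr]$, so by Cauchy--Schwarz
\begin{equation*}
|\nabla P_t\Phi(U_0)\cdot\xi|\leq\sqrt{P_t(|\Phi|^2)(U_0)}\,\sqrt{\mE\|v\|_{L^2}^2}+\sqrt{P_t(\|\nabla\Phi\|^2)(U_0)}\,\sqrt{\mE\|\rho_t\|^2}.
\end{equation*}
The task therefore reduces to constructing $v$ with $\mE\|v\|_{L^2}^2\leq C\exp(\eta\|U_0\|^2)$ and $\mE\|\rho_t\|^2\leq C\exp(\eta\|U_0\|^2)e^{-2\gamma_0 t}$.

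I would construct $v$ on consecutive unit intervals. On $[n,n+1]$, given the residual $\rho_n$ from step $n$, pick
\begin{equation*}
v|_{[n,n+1]}:=\cA_{n,n+1}^{\ast}(\cM_{n,n+1}+\kappa_n I)^{-1}P_N J_{n,n+1}\rho_n,
\end{equation*}
where $N$ and $\kappa_n$ are to be chosen, and $P_N$ is the projection onto $H_N$. A direct computation using $\cA_{n,n+1}\cA_{n,n+1}^{\ast}=\cM_{n,n+1}$ gives
\begin{equation*}
\rho_{n+1}=\kappa_n(\cM_{n,n+1}+\kappa_n I)^{-1}P_N J_{n,n+1}\rho_n+Q_N J_{n,n+1}\rho_n.
\end{equation*}
The $Q_N$-piece is controlled by the Jacobian bound $\mE\|J_{0,1}Q_N\|^p\leq\gamma\exp(\delta\|U_0\|^2)$ of the first lemma in Section 3, with $\gamma$ arbitrarily small for $N$ large. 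For the $P_N$-piece, split according to whether $P_N J_{n,n+1}\rho_n\in\cS_{\alpha,N}$ or not: if not, the ratio $\|P_N J_{n,n+1}\rho_n\|^2/\|J_{n,n+1}\rho_n\|^2<\alpha$, which effectively makes $\rho_{n+1}$ small compared to $J_{n,n+1}\rho_n$; if yes, Theorem \ref{p-9} tells us that on a set $\Omega_\eps$ with $\mP(\Omega_\eps^c)\leq r(\eps)\exp(\eta\|U_0\|^2)$ the Malliavin matrix satisfies $\cM_{n,n+1}\geq\eps$ on $\cS_{\alpha,N}$, so that $\kappa_n(\cM_{n,n+1}+\kappa_n I)^{-1}\leq\kappa_n/(\kappa_n+\eps)$. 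Choosing $\kappa_n$ small ensures contraction on the good event, while on the bad event we fall back on the moment estimates $\mE\|J_{0,n+1}\rho_n\|^p\leq C\exp(\eta\|U_0\|^2)\|\rho_n\|^p$ from Lemma \ref{p-29}, paired with the small probability $r(\eps)$ via H\"older. Tuning $\alpha,N,\eps,\kappa_n$ appropriately (as $n\to\infty$, $\eps_n\to 0$ and $\kappa_n\to 0$ geometrically, balanced against $r(\eps_n)$), one obtains $\mE\|\rho_n\|^2\leq C\exp(\eta\|U_0\|^2)e^{-2\gamma_0 n}$ for some $\gamma_0>0$.

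Simultaneously, $\mE\|v\|_{L^2}^2$ is bounded by $\sum_n\mE\|\cA_{n,n+1}^{\ast}(\cM_{n,n+1}+\kappa_n I)^{-1/2}\|^2\cdot\|(\cM_{n,n+1}+\kappa_n I)^{-1/2}P_N J_{n,n+1}\rho_n\|^2$, and using the first two inequalities of Lemma \ref{p-10}'s follow-up and the same good/bad event dichotomy yields $\mE\|v\|_{L^2}^2\leq C\exp(\eta\|U_0\|^2)$; restriction to a finite time $t$ follows from the geometric decay of $\|\rho_n\|^2$. The main obstacle is the delicate joint optimization of the parameters $(\alpha,N,\eps_n,\kappa_n)$ so that the contributions from the ``bad'' events (where Theorem \ref{p-9} gives nothing) are dominated by $r(\eps_n)^{1/2}$ times the high-moment Jacobian bounds and still leave an exponential decay rate $\gamma_0>0$; this is exactly the scheme in \cite{martin,FGRT}, and the fractional dissipation $\alpha,\beta>1$ does not affect it since all inputs, namely Theorem \ref{p-9} and the moment estimates of Section 3, have already been established in that stronger setting.
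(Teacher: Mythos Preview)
Your proposal is correct and follows exactly the approach the paper indicates: the paper's own proof of Proposition \ref{p-28} is merely a pointer to \cite{FGRT,martin,Hairer02,Hairer}, invoking the Malliavin integration-by-parts/iterative control scheme together with Theorem \ref{p-9} and the moment bounds of Section 3, which is precisely what you have sketched in more detail. One small slip: the dichotomy should be on whether $J_{n,n+1}\rho_n\in\cS_{\alpha,N}$, not $P_N J_{n,n+1}\rho_n$ (the latter lies in $H_N$ and hence trivially in $\cS_{1,N}$), but this is cosmetic and does not affect the argument.
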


\begin{proof}
Ever since \cite{martin}  the proof of this type of gradient inequality have been attached great importance to and improved all along. Now the method to prove it is more or less standard.  Broadly speaking, supplied with moment estimates of $U,J_{s,t}\xi, \cK_{s,t}\xi,$ $J_{s,t}^{(2)}(\xi,\xi')$ listed in  Section 2, one need to formulate a control problem through the Malliavin integration by parts formula, then do some decay estimates adopting an iterative construction with the aid of Lemma 3.4, Lemma 3.5, Lemma 3.6. We refer the readers to \cite{FGRT,martin,Hairer02,Hairer} and omit the details.

\end{proof}

\section{Proof of Theorem  \ref{p-27}}

Our strategy in this section is to apply \cite[Theorem 3.4]{Hairer02} and \cite[Theorem 2.1]{KW}, separately, to draw the conclusion of mixing rates and central limit theorem. Since it is very straightforward and similar to the proof of \cite[Theorem 2.3]{FGRT}, we will sketch our arguments. Before carrying them out, we need to introduce a type of 1-Wasserstein distance. Referring to Lemma \ref{p-30} (1) to fix some $\eta^*>0$, then for any $\eta \in (0, \eta^*], r\in (0,1],$  define the metric  $\rho_{r}$ on $H$  by
\begin{eqnarray}\label{peng-1}
  \rho_{r}(U_1,U_2):=\inf_{\gamma}\int_0^1 \exp{(\eta r\|\gamma(t)\|^2 )}\|\gamma'(t) \|\dif t,
\end{eqnarray}
where the infimum  runs over all paths $\gamma$ such that $\gamma(0)=U_1$ and $\gamma(1)=U_2.$ For brevity of notation, we set $\rho:=\rho_1.$

\begin{proof}[\textbf{Proof of Theorem  \ref{p-27}}]
(a)
Ito's formula yields that
  \begin{eqnarray*}
   \|U_t\|^2-\|U_0\|^2+2\int_0^t \|\Lambda^\alpha u_s\|^2\dif s +2\int_0^t \|\Lambda^\beta b_s\|^2\dif s &=& \cE_0t +2 \int_0^t \langle b_s, \mathcal{Q}_b\dif W_s\rangle,
  \end{eqnarray*}
then  it follows that
\begin{eqnarray*}
  \frac{1}{T}\mE \int_0^T \|U_t\|_{H^1}\dif t\leq  \frac{\|U_0\|^2}{T}+\cE_0.
\end{eqnarray*}
By the classical Krylov-Bogoliubov averaging method, one arrives at that there exists an invariant measure for the semigroup $P_t.$

(b) Finding a strong type of Lyapunov structure: Let $\kappa=\frac{3}{2}, r_0=\frac{1}{4},$ and $\eta'=\frac{1}{4}\cdot \frac{\eta}{2}\cdot e^{-1/2}$, by lemma  \ref{p-29} we have
\begin{eqnarray*}
  \|J_t\xi\|\leq C\exp{(\eta'\int_0^t \|U_s\|_{H^1}^2\dif s)}.
\end{eqnarray*}
Then  by Lemma \ref{p-30}, for $r\in [r_0,2\kappa]$ and $t\in [0,1]$ we get
\begin{eqnarray*}
  \mE \[\exp{(r\eta \|U_t\|^2)}(1+\|J_t\xi\|)\]
  & \leq  &  C  \mE \[\exp{(r\eta \|U_t\|^2+\eta'\int_0^t \|U_s\|_{H^1}^2)}\]
  \\ & \leq &  C  \mE \[\exp{(r\eta \|U_t\|^2+\frac{\eta r}{2}e^{- t/2}\int_0^t \|U_s\|_{H^1}^2)}\]
  \\ &\leq & C\exp\{\eta r |U_0|^2e^{-\frac{t}{ 2} }\}.
\end{eqnarray*}
Therefore, \cite[Assumption 4]{martin}
is verified with  $\kappa=\frac{3}{2},\eta\in (0,\frac{1}{3}\eta^*), r_0=\frac{1}{4},$
\begin{eqnarray*}
 && V_*(x)=\exp{(\frac{3}{4}\eta x^2)},~~~ \forall x\in \mR,
\\ &&  V^*(x)=\exp{(\frac{17}{16}\eta x^2)},~~~ \forall x\in \mR,
\\  &&  V(x)=\exp((\eta x^2)) ,~~~ \forall x\in \mR,
\\ && \xi(t)=e^{-\frac{t}{2}},~~~t\in [0,1],
 \end{eqnarray*}
which suggests the desirable Lyapunov structure.

(c) Gradient inequality on the Markov semigroup:  This is just reemphasizing. By Proposition \ref{p-28},   for some $\gamma_0>0$ and every $\eta>0, U_0\in H$, the Markov semigroup $\{P_t\}_{t\geq 0}$ defined by (\ref{p-26}) satisfies the following estimate
  \begin{eqnarray*}
    \|\nabla P_t\Phi(U_0)\|\leq C\exp{(\eta \|U_0\|^2)}\left(\sqrt{P_t(|\Phi|^2)(U_0)}+e^{-\gamma_0t}
    \sqrt{P_t(\|\nabla\Phi\|^2)(U_0)}\right)
  \end{eqnarray*}
  for every $t\geq 0$ and $\Phi \in C_b(H)$, where $C=C(\eta,\gamma_0)$ is independent of $t$ and $\Phi.$

(d) What we need now is to establish a relatively weak form of
irreducibility, i.e., for any  $\varrho,\eps>0, r\in (0,1)$, there exists $T^*=T^*(\varrho,r,\eps)$ such that for any $T>T^*,$
\begin{eqnarray}
  \inf_{\|U_1\|,\|U_2\|\leq \varrho } \sup_{\Gamma \in \cC(P_T^*\delta_{U_1},P_T^*\delta_{U_2})} \Gamma\{(U',U'')\in H\times H:\rho_r(U',U'')<\eps\}>0,
\end{eqnarray}
where $\delta_U$ is the dirac measure concentrated on $U$ and  $\cC(\mu_1,\mu_2)$ denotes the set of all coupling measures $\pi$
on $H \times H$ such that $\pi(A \times H)=\mu_1(A)$ and
$\pi(H\times A)=\mu_2(A)$ for every Borel set~$ A \subset H.$

 In fact, this can be deduced immediately by irreducibility in general sense, which is, for any  $\varrho,\eps>0$ there exists $T_*=T_*(\varrho,\eps)\geq 0$ such that
\begin{eqnarray}
 \inf_{\|U_0\|\leq \varrho } P_T(U_0, \{U\in H,\|U\|\leq \eps\})>0,
\end{eqnarray}
for any $T>T_*.$ Utilizing the dissipativity of deterministic system and properties of Gaussian distribution, this can be proved following the classical arguments(c.f. \cite{G-1996}).

It is common sense that (a)-(d) implicates there exits a unique invariant measure $\mu_*$ for $P_t$.

For every $\Phi\in \mathcal{O}_\eta$, one can show that
\begin{eqnarray*}
  \int \Phi(U)\dif \mu_*(U)\leq C \|\Phi\|_\eta,
\end{eqnarray*}
for some constant $C$ dependent on $\eta$. From this, Applying \cite[Theorem 3.4]{Hairer02} yields that the unique invariant measure $\mu_*$ is exponentially mixing.

(e) To apply \cite[Theorem 2.1]{KW}, the following inequality is critical
\begin{eqnarray}\label{p-34}
  \int [\rho(0,U)]^3P_t(U_0,\dif U)&\leq & C\exp{(\eta^* \|U_0\|^2)},
\end{eqnarray}
where the constant $C$ is independent of $U_0$ and $t\geq 0.$ With the definition of $\rho$, this can be easily established from Lemma \ref{p-30}. The proof is finished.
\end{proof}

\appendix





 \section*{Acknowledgements}


The authors  thank the anonymous referees for their  valuable comments and suggestions.



\bibliographystyle{elsarticle-num}



\end{document}